\documentclass[reqno]{amsart}

\usepackage{amsfonts}
\usepackage{amssymb}
\usepackage{amsmath}

\usepackage{enumitem}
\usepackage{xcolor}
\usepackage{todonotes}

\setenumerate{label={\rm (\alph{*})}}
\usepackage{bbm,bm,euscript,mathrsfs}
\usepackage{paper_diening}
\usepackage{graphicx}
\usepackage[top=1in, bottom=1.25in, left=1.10in, right=1.10in]{geometry}
\usepackage{cases, color,amsthm} 

\usepackage{relsize}
\usepackage{hyperref}

\usepackage[nameinlink, noabbrev, capitalize]{cleveref}

\hypersetup{linkcolor=blue, colorlinks=true,citecolor = red}

\allowdisplaybreaks

\numberwithin{equation}{section}

\usepackage{tikz-cd}
\usetikzlibrary{lindenmayersystems}
\usetikzlibrary{decorations.pathreplacing}

\DeclareMathOperator{\Div}{div}

\newcommand{\R}{\mathbb R}

\newcommand{\dd}{\mathrm d}
\newcommand{\dt}{\,\mathrm{d} t}

\newcommand{\bx}{\mathbf{x}}
\newcommand{\by}{\mathbf{y}}

\newcommand{\bq}{\mathbf{q}}
\newcommand{\bu}{\mathbf{u}}

\newcommand{\bv}{\mathbf{v}}
\newcommand{\bn}{\mathbf{n}}

\newcommand{\dy}{\, \mathrm{d}\mathbf{y}}

\newcommand{\dx}{\, \mathrm{d} \mathbf{x}}

\newcommand{\divx}{\mathrm{div} }

\newcommand{\nabx}{\nabla }
\newcommand{\naby}{\nabla_{\mathbf{y}}}

\newcommand{\Delx}{\Delta }
\newcommand{\Dely}{\Delta_{\mathbf{y}}}

\newcommand{\dH}{\,\mathrm{d}\mathbf{\mathcal{H}}^2}
\newcommand{\dxt}{\,\mathrm{d}x\,\mathrm{d}t}
\newcommand{\ds}{\,\mathrm{d}s}

\newcommand{\Oeta}{\Omega_\eta}

\newtheorem{theorem}{Theorem}[section]
\newtheorem{lemma}{Lemma}[section]
\newtheorem{proposition}{Proposition}[section]
\newtheorem{corollary}{Corollary}[section]
\newtheorem{remark}{Remark}[section]

\newtheorem{definition}{Definition}[section]
\theoremstyle{definition}

\begin{document}

\title[Thermodynamics of compressible FSI]{Well-posedness and regularity in thermodynamics of compressible fluid-structure interactions}

\author{D. Breit}
\author{P. M. Ngougoue Ngougoue}
\address{Faculty of Mathematics, University of Duisburg-Essen, Thea-Leymann-Straße 9, 45127 Essen, Germany}
\email{dominic.breit@uni-due.de}
\email{pierre.ngougouengougoue@uni-due.de}

\subjclass[2020]{35B65, 35Q74, 35R37, 76N10, 74F10, 74K25}

\date{\today}

\keywords{Compressible Navier-Stokes system, Viscoelastic shell equation, Fluid-Structure interaction, Strong solutions.}

\begin{abstract}
We consider the interaction of a general viscous compressible and heat-conducting fluid with an elastic shell located at the boundary of the fluid's domain. The fluid is described by the compressible Navier--Stokes--Fourier equations and the shell evolves in accordance with a viscoelastic beam equation. Both are coupled through kinematic boundary conditions and the balance of forces. 
Our first result is the local-in-time well-posedness of the underlying coupled system of nonlinear PDEs in smooth function spaces. 

Eventually, we prove a conditional regularity criterion which is in the spirit of the Beale--Kato--Majda condition for the incompressible Euler equations combined with the boundedness of density and temperature. It rests upon a weak-strong uniqueness result for the underlying system.
\end{abstract}

\maketitle

\section{Introduction}
 
The interactions of fluids with elastic structures are important for many applications
ranging from Hydroelasticity and Aeroelasticity to Biomechanics and Hydrodynamics. We are interested in the case, where a viscous fluid interacts with a flexible shell which is located at a part of the boundary (or even describes the complete boundary) of the underlying domain $\Omega\subset\R^3$ denoted by $\omega$. The shell, described by a function $\eta:(0,T)\times\omega\rightarrow\R$ for some $T>0$, reacts to the surface forces induced by the fluid and deforms the domain $\Omega$ to $\Omega_{\eta(t)}$. From a mathematical point of view the first step is to analyse the well-posedness of the underlying system of nonlinear PDEs. This opens the door for numerical approximations and, subsequently, engineering applications. Typically, strong solutions only exist in short-time such that one is also interested in conditions ensuring that a weak solution (which typically exists globally in time) does not blow up: one is looking for conditional regularity criteria.

\subsection{The fluid-structure interaction problem}
We consider the interaction of a viscous compressible and heat-conducting fluid and a flexible shell. The latter is located at the boundary of the fluid's reference domain $\Omega\subset \mathbb{R}^{3}$.
Accordingly, the shell function $\eta:(t, \by)\in I \times \omega \mapsto   \eta(t,\by)\in \mathbb{R}$ with $I=(0,T)$ solves\footnote{We use nondimensional variables and set the shell’s surface mass density, structural damping coefficient, and bending stiffness to 1, so no explicit constants appear in  \eqref{eq:ShellEq}.}
\begin{equation}\label{eq:ShellEq}
\left\{\begin{aligned}
& \partial_t^2\eta - \partial_t\Dely \eta + \Dely^2\eta=-\bn^\intercal(\bm{\tau}\circ\bm{\varphi}_\eta)\bn_\eta
\mathrm{det}(\naby \bm{\varphi}_{\eta}) 
&\text{ for all }  (t,\by)\in I\times\omega
,\\
&\eta(0,\by)=\eta_0(\by), \quad (\partial_t\eta)(0, \by)=\eta_*(\by)
&\text{ for all } \by\in\omega,
\end{aligned}\right.
\end{equation}
with periodic boundary conditions in space. Here, $\omega \subset \mathbb{R}^2$ is such that there is $\bfvarphi_\eta :\omega\to \partial \Omega_\eta$ that parametrises the boundary of the deformed domain $\Omega_\eta$, see Figure 2. We denote the fluid velocity and density  by 
\[\mathbf{v}:(t, \mathbf{x})\in I \times \Oeta \mapsto  \mathbf{v}(t, \mathbf{x}) \in \mathbb{R}^3  \quad \text{and} \quad  \rho:(t, \mathbf{x})\in I \times \Oeta \mapsto  \rho(t, \mathbf{x}) \in \mathbb{R}.  \]
The vectors $\bn$ and $\bn_\eta$ are the normal vectors  of the reference boundary and of the deformed boundary, respectively, whereas $\bm{\tau}$ denotes the Cauchy stress of the fluid given by {\em Newton's rheological law}, that is
$$\bm{\tau} =\mathbb{S}(\nabla\bv)-p(\rho,\vartheta)\mathbb I_{3\times 3}$$
with the viscous stress tensor
$$\mathbb{S}(\nabla\bv)=2\mu\left(\frac{1}{2}\big(\nabla\bv+(\nabla\bv)^\intercal\big)-\frac{1}{3}\Div \bv\mathbb{I}_{3\times 3}\right)+\left(\lambda+\frac{2}{3}\mu\right)\Div\bv\mathbb{I}_{3\times 3},$$
where the pressure $p=p(\rho,\vartheta)$ is described by the standard Boyle-Mariott equation of state
$$p(\rho,\vartheta)=\rho\vartheta.$$
The viscosity coefficients $\mu$ and $\lambda$ satisfy
$$\mu>0, \quad \lambda+\frac{2}{3}\mu\geq0. $$ 

The motion of the fluid in $\Omega_\eta$ is governed by the complete Navier--Stokes--Fourier equations:
\begin{equation}\label{eq:ContMomentEq}
\left\{\begin{aligned}
&\partial_t \rho  + \divx(\rho\mathbf{v} \big)
= 
0 &\text{ for all }(t,\bx)\in I\times\Omega_\eta,\\
&\partial_t(\rho \bv)+\Div(\rho\bv\otimes\bv)
= 
\mu\Delx \bv +(\lambda+\mu)\nabx\Div \bv-\nabx p(\rho,\vartheta) &\text{ for all }(t,\bx)\in I\times\Omega_\eta,\\
&c_v\partial_t(\rho \vartheta)+\Div(c_v\rho\vartheta\bv)
= 
\kappa\Delx \vartheta +\mathbb S(\nabla\bfv):\nabx\bv- p(\rho,\vartheta)\divx \bfv &\text{ for all }(t,\bx)\in I\times\Omega_\eta,\\
&\rho(0,\bx)=\rho_0(\bx), \quad
(\rho\bv)(0,\bx)=\bq_0(\bx), \quad
\vartheta(0,\bx)=\vartheta_0(\bx)&\text{ for all } \bx\in \Omega_{\eta_0}.
\end{aligned}\right.
\end{equation}
Here $c_v=\frac{1}{\gamma-1}>0$ with the adiabatic exponent $\gamma>1$ and $\kappa>0$ is the heat conductivity coefficient.
The equations \eqref{eq:ShellEq} and \eqref{eq:ContMomentEq} are coupled through the kinematic boundary condition
\begin{align}
\label{interfaceCond}
\bv\circ \bfvarphi_\eta= \big(\partial_t\eta\big)\bn \quad\text{ for all } (t,\by)\in I\times \omega,
\end{align} 
while the internal energy equation is complemented by
\begin{align}
\label{interfaceCond2}
\nabla\vartheta\cdot \bfn_\eta\circ\bfvarphi_\eta^{-1}=0 \quad\text{ for all } (t,\by)\in I\times \partial\Omega_\eta.
\end{align} 
\begin{figure}\label{fig:1}
\begin{center}
\begin{tikzpicture}[scale=2]
  \begin{scope}     \draw [thick, <->] (0.5,0.5) -- (0.5,-0.5) -- (2.8,-0.5);
        \node at (0.5,0.6) {$\R$};
        \node at (1.5,-0.25) {$\Omega$};
        \node [blue] at (1.5,0.35) {$\omega$};
       \node at (3.1,-0.5) {$\R^2$};
        \draw [blue] (0.5,0.2) -- (2.5,0.2);
                \draw (2.5,0.2) -- (2.5,-0.5);
                   \node at (3.5,0.2) {$\bfvarphi_\eta$};
        \draw [thick, <->] (4.5,0.5) -- (4.5,-0.5) -- (6.8,-0.5);
        \node at (4.5,0.6) {$\R$};
       \node at (7.1,-0.5) {$\R^2$};
       \draw [blue,dashed] (4.5,0.2) -- (6.5,0.2);
                \draw (6.5,0.2) -- (6.5,-0.5);
                \draw [red] (4.5,0.2) .. controls (4.7,-0.1) and (5,0.8) .. (5.3,0.1);
                \draw [red] (5.3,0.1) .. controls (5.5,-0.1) and  (5.7,0.4) .. (6.5,0.2);
                        \node at (5.4,-0.25) {$\Omega_\eta$};
                         \node [red] at (5.5,0.4) {$\eta$};
          \draw [<-,dashed] (4,-0.2) to [out=150,in=30] (3,-0.2);
  \end{scope}
\end{tikzpicture}
\caption{Domain transformation in the simplified set-up, cf. \cite[fig. 1]{breit2024regularity}.}
\end{center}
\end{figure}
\begin{figure}\label{fig:2}
\begin{center}
\begin{tikzpicture}[scale=2]
  \begin{scope}     \draw [thick, <->] (0.5,0.5) -- (0.5,-0.5) -- (2.8,-0.5);
        \node at (0.5,0.6) {$\R$};
        \node at (1.3,-0.25) {$\Omega$};
          \node [blue] at (1.5,0.2) {$\omega$};
       \node at (3.1,-0.5) {$\R^2$};
       \draw (0.5,-0.5) .. controls (0,0) and (0.5,0.2) .. (0.7,0.3);
     \draw [blue] (0.5,0.2) .. controls (0.7,0.3) and (0.7,0.3) .. (0.7,0.3);
        \draw [blue] (0.7,0.3) .. controls (0.8,0.4) and (0.9,0.3) .. (1.1,0.2);
       \draw [blue] (1.1,0.2) .. controls (1.3,-0.3) and (1.5,0.3) .. (2.5,0);
       \draw (2.5,0) .. controls (2.8,-0.2) and (2.5,-0.9) .. (0.5,-0.5);
                   \node at (3.5,0.2) {$\bfvarphi_\eta$};
        \draw [thick, <->] (4.5,0.5) -- (4.5,-0.5) -- (6.8,-0.5);
        \node at (4.5,0.6) {$\R$};
       \node at (7.1,-0.5) {$\R^2$};
                     \draw (4.5,-0.5) .. controls (4,0) and (4.5,0.2) .. (4.5,0.2);
       \draw (6.5,0) .. controls (6.8,-0.2) and (6.5,-0.9) .. (4.5,-0.5);
        \draw [blue,dashed] (4.5,0.2) .. controls (4.7,0.3) and (4.7,0.3) .. (4.7,0.3);
        \draw [blue,dashed] (4.7,0.3) .. controls (4.8,0.4) and (4.9,0.3) .. (5.1,0.2);
       \draw [blue,dashed] (5.1,0.2) .. controls (5.3,-0.3) and (5.5,0.3) .. (6.5,0);
                \draw [red] (4.5,0.2) .. controls (4.7,-0.1) and (5,0.8) .. (5.3,0.1);
                \draw [red] (5.3,0.1) .. controls (5.5,-0.1) and  (5.7,0.4) .. (6.5,0);
                        \node at (5.4,-0.25) {$\Omega_\eta$};
                         \node [red] at (5.5,0.4) {$\eta$};
          \draw [<-,dashed] (4,-0.2) to [out=150,in=30] (3,-0.2);
  \end{scope}
\end{tikzpicture}
\caption{Domain transformation in the general set-up, cf. \cite[fig. 2]{breit2024regularity}.}
\end{center}
\end{figure}

\subsection{The state of the art}
\textbf{Incompressible fluids:}
In the case of a homogeneous incompressible fluid the density $\rho$ is a positive constant. The first equation of \eqref{eq:ContMomentEq} reduces to $\Div\bfv=0$ and the pressure function $p$ is an unknown
(the equations for the temperature in \eqref{eq:ContMomentEq} together with \eqref{interfaceCond2} are decoupled and thus removed from the set of equations).
 The existence of weak solutions to \eqref{eq:ShellEq}--\eqref{interfaceCond} (even for the case of a general reference geometry as in Figure 2) is shown in \cite{LeRu,MuCa,MuSc}. Weak solution can be constructed to satisfy an energy inequality with the fluid's energy $\frac{1}{2}\int_{\Omega_\eta}|\bfv|^2\dx$. 

In the case of elastic plates there exists several results concerning the existence of local-in-time strong solutions, see \cite{DeSa,GraHilLe,Le,DRR,maity2021existence2}. In the 2D case these solutions exist globally in time, cf. \cite{GraHil}. The study of well-posedness for fluid-structure interactions with general reference geometries as in Figure 2 (the case of elastic shells) has only been started very recently. The existence of strong solutions to \eqref{eq:ShellEq}--\eqref{interfaceCond} has been shown in~\cite{breit2024regularity} (in 2D, globally in time) and \cite{breit2023ladyzhenskaya} (in 3D, locally in time).

Recently, a version of the classical Ladyzhenskaya--Prodi--Serrin condition for \eqref{eq:ShellEq}--\eqref{interfaceCond} has been proved in \cite{breit2023ladyzhenskaya}: under additional integrability conditions on the velocity field, the weak solution must be regular as well as unique in the class of weak solutions.
This is a consequence of an acceleration estimate
and a weak-strong uniqueness result for \eqref{eq:ShellEq}--\eqref{interfaceCond}. The latter generalises a result from \cite{ScSr} which only applies to elastic plates (Figure 1). 

\textbf{Isentropic compressible fluids:}
In the isentropic compressible Navier--Stokes equations the density $\rho:(0,T)\times\Omega_\eta\rightarrow [0,\infty)$ is an unknown function and the pressure relates to it via the adiabatic law
\begin{align*}
p=p(\rho)=\tfrac{1}{\mathrm{Ma}^2}\rho^\gamma,
\end{align*}
where $\mathrm{Ma}>0$ is the Mach-number and $\gamma>1$ is the adiabatic exponent. The existence of weak solutions to \eqref{eq:ShellEq}--\eqref{interfaceCond} in this case was shown in
\cite{breit2018compressible} (in the case of a general reference geometry as in Figure 2). These solutions satisfy an energy inequality, where the energy of the fluid system \eqref{eq:ContMomentEq} is given by \begin{align*}
\tfrac{1}{2}\int_{\Omega_\eta} \rho|\bfv|^2\dx+\tfrac{1}{(\gamma-1)\mathrm{Ma}^2}\int_{\Omega_\eta} \rho^\gamma\dx.
\end{align*}
The result from \cite{breit2018compressible} gives a counterpart to the celebrated theory by Lions \cite{lions1996mathematical} and Feireisl \cite{feireisl1}.
 The local-well-posedness in the case of elastic plates is studied in \cite{MRT}\footnote{In \cite{MRT} a second order diffusion operator is considered in \eqref{eq:ShellEq}.} for the flat reference geometry (see also \cite{Mi} for the 2D case).
The existence of a local-in-time strong solution in the case of elastic shells has been established very recently in \cite{ngougoue2025local} by the second author. Eventually, he proved also the conditional regularity of solutions in \cite{MN}. The main assumptions are, roughly speaking:
\begin{itemize}
\item A compressible version of the classical Serrin condition;
\item Boundedness of the density;
\item A Baele--Kato--Majda type condition for velocity and density.
\end{itemize}
A weak-strong uniqueness results is proved in \cite{MN} as well (see also \cite{trifunovic2023compressible} for weak-strong uniqueness in the case of elastic plates).

\textbf{Heat-conducting compressible fluids:}
The motion of a general compressible and heat-conducting fluid is described by the Navier--Stokes--Fourier equations. In addition to the velocity field $\bfv$ and density $\varrho$, the absolute temperature $\vartheta:(0,T)\times\Omega_\eta\rightarrow [0,\infty)$ is an unknown. In the case of an ideal gas the pressure law is given by\footnote{The results concerning weak solutions to the Navier--Stokes--Fourier equations are based on
 the more complicated pressure law $p\sim \vartheta\rho+\rho^\gamma+\vartheta^4$, cf. \cite{FeNobook}.}
\begin{align*}
p=p(\rho,\vartheta)=\rho\vartheta.
\end{align*}
The balance of mass and momentum (the first two equations in \eqref{eq:ContMomentEq}) are complemented by the balance of energy (the third equations in \eqref{eq:ContMomentEq}).
The existence of weak solutions to \eqref{eq:ShellEq}--\eqref{interfaceCond2} in its full extend is shown in
\cite{BrSc23} (in the case of a general reference geometry as in Figure 2). These solutions satisfy an energy equality, where the energy of the fluid system \eqref{eq:ContMomentEq} is given by
\begin{align*}
\tfrac{1}{2}\int_{\Omega_\eta} \rho|\bfv|^2\dx+\int_{\Omega_\eta} \rho c_v\vartheta\dx.
\end{align*}
Further results can be found in \cite{MMNRT} and \cite{TrWa}, where the possibility of heat-transfer through the shell is included.
 The local well-posedness of \eqref{eq:ShellEq}--\eqref{interfaceCond2} in the case of elastic plates is studied in \cite{maity2021existence}, while the case of elastic shells remains open.
Results regarding conditional regularity and weak-strong uniqueness seem to be missing completely.
The main purpose of this work is to fill these gaps.

\subsection{Main results}
\textbf{Local well-posedness:}
Our first main result is the local-in-time existence of a unique strong solution to \eqref{eq:ShellEq}--\eqref{interfaceCond2} in classes of smooth function spaces, see Theorem \ref{theo:mainresult} for the complete statement.
Our strategy of proof is
based on transformation a to the fixed reference domain, linearisation and a fixed point argument. For given to be completed in Section \ref{sec:localstrongfixedpoint} by establishing the usual self-mapping and contraction property of the fixed point map.
As a preparation we consider several subproblems with fixed geometry in Section \ref{sec:continuitysubprob}--\ref{sec:InternalProb}:
\begin{itemize}
\item The continuity subproblem,
where the velocity field is given.
\item The momentum-structure subproblem, where all nonlinear terms are prescribed through the given data (geometry, density, velocity and temperature)
\item The internal energy subproblem, where the density and velocity field are given.
\end{itemize}
The first two bullets are already analysed in \cite{MN} such that our focus is on the third one.
Here, a careful analysis of the transformed internal energy equation in the reference geometry is required.  After transformation we obtain an equation with coefficients depending on the structure function being of limited regularity.

\textbf{Conditional regularity:}
Our second main result concerns the conditional regularity of solutions. Complementing the conditions from the isentropic case from \cite{MN} mentioned above by according conditions for the temperature (boundedness and a Baele--Kato--Majda type condition) we can prove a blow-up condition for strong solutions, see  \cref{theo:MainResult}
and \cref{theo:MainResult2}. In a first step we prove an acceleration estimate, which is of its own interest. But, unfortunately, it does not yield enough regularity to obtain a strong solution in the sense of our definition. Hence we derive higher order estimates in a second step implying sufficient smoothness. Our main focus here is again the internal energy equation. The analysis of the transformed internal energy equations is more delicate than that from \cref{sec:InternalProb}.
In fact, the regularity of the structure obtained from the acceleration estimate is just critical to obtain the required maximal regularity estimate for the internal energy equation, cf.~estimate \eqref{eq:theend}. 

Finally, we obtain a weak-strong uniqueness result to justify the computations leading to conditional higher order estimates for weak solutions, see  \cref{cor:WeakStrongUniqueMain}.


\section{Mathematical framework and main results}
In this section we present the geometric framework
describing moving domains and our main results concerning well-posedness, conditional regularity and weak-strong uniqueness.
\subsection{Geometric Framework} 
We adopt the framework of   \cite[Section 2.4]{breit2024regularity}, and  assume that the \phantom{refer} reference domain $\Omega \subset \mathbb{R}^3 $ is open, bounded, and has a smooth, orientable boundary $\partial\Omega$, so that a consistent outward unit normal vector $\bn$ is well-defined and smooth.  To formalise the structure of the boundary $\partial\Omega$, we assume that it can be  parametrised by an injective map  $\bm{\varphi} \in C^{k}(\omega; \mathbb{R}^3) $  for some sufficiently large $k \in \mathbb{N}$.  For each $ \by = (y_1, y_2) \in \omega $, the pair of tangent vectors \( \partial_1 \bm{\varphi}(\by) \) and \( \partial_2\bm{\varphi}(\by) \) is assumed to be linearly independent. This ensures that $\bm{\varphi}$ defines a regular surface embedded in $\mathbb{R}^3 $, and the corresponding unit normal vector can be explicitly determined from $\bm{\varphi} $ by 
\[\bn(\by) = \dfrac{\partial_1\bm{\varphi}(\by) \times \partial_2 \bm{\varphi}(\by)}{| \partial_1 \bm{\varphi}(\by) \times \partial_2 \bm{\varphi}(\by) |}.  \]
Importantly, as a compact connected $C^k$-hypersurface in $\mathbb{R}^3$, $\partial\Omega$ satisfies the uniform interior and exterior ball condition (cf. \cite[Section 4.1]{PruessSimonett2013}), that is, we find some $\, L > 0 $ such that for each point $\by \in \partial\Omega$, there are balls  $B(\bx, L) \subset \Omega $ and $B(\mathbf{z}, L) \subset \overline{\Omega}^c$ such that 
\[\partial\Omega \cap \overline{B}(\bx, L) \cap \overline{B}(\mathbf{z}, L) = \{\by\}. \]
Hence, as in \cite[Section 3.1]{PruessSimonett2016} we conclude that $\partial\Omega $ admits a tubular neighbourhood of radius $L >0 $, defined as 
\begin{equation}\label{eq:TubNeighbor}
\mathcal{N}_L := \left\{ \bx \in \mathbb{R}^3  \colon \mathrm{d}(\bx, \partial\Omega) < L \right\}.
\end{equation}
Within $\mathcal{N}_L $, every point $\bx $ can be uniquely projected onto the boundary $ \partial \Omega $  via the closest point projection. More precisely, for each $ \bx \in \mathcal{N}_L $, we define the map \[\by(\bx) = \arg\min\limits_{\by \in \omega} |\bx - \bm{\varphi}(\by)|,  \] so that the closest point on the boundary is given by $ \mathbf{p}(\bx) := \bm{\varphi}(\by(\bx)) $, and the signed distance to the boundary is defined as $s(\bx) := (\bx - \by(\bx))\cdot \bn(\by(\bx))$.

\noindent For $L > 0$ small enough, the projection $\mathbf{p}(\bx) $ is well-defined, and we have 
\[
|s(\bx)| = |\bx - \by(\bx)| = \min\limits_{\by \in \omega} |\bx - \bm{\varphi}(\by)| =  \mathrm{d}(\bx, \partial\Omega) \qquad \text{for all } \; \bx \in \mathcal{N}_L.
\]
Indeed, for large $L >0$, the normal vectors from different points on $\partial\Omega $ might intersect, leading to ambiguity in projection.  Consequently, $\mathcal{N}_L $ provides a natural coordinate system for points $\bx \in \mathcal{N}_L$. Specifically, each point $\bx \in \mathcal{N}_L $ can be uniquely written as 
\[\bx = \mathbf{p}(\bx) + s(\bx)\bn(\by(\bx)),  \] 
which implies -- together with \eqref{eq:TubNeighbor} -- the representation 
\begin{equation}\label{eq:TubNeighborFinal}
\mathcal{N}_L := \big\{ \by + s\bn(\by) \colon (s, \by) \in (-L, L)\times \omega \big\}.
\end{equation}

Furthermore, recall that the shell displacement $\eta$ defines a deformation of the boundary in the normal direction, leading to the time-dependent boundary 
\[\partial\Omega_{\eta (t)} = \big\{ \bm{\varphi}(\by) + \eta(t, \by)\bn(\by) \colon \by \in \omega\big\},  \]
with associated deformation map $ \bm{\varphi}_\eta (t, \by) = \bm{\varphi}(\by) + \eta(t, \by)\bn(\by). $ \\
However, to ensure that $\eta$ defines a valid normal parametrisation and that the deformed boundary $\partial\Omega_{\eta (t)}$ remains well-defined and regular over time, we required the following conditions:\\
\begin{itemize}

    \item[$\bullet$] The deformed boundary must remain within the tubular neighbourhood, that is, $$ \partial\Omega_{\eta (t)}  \subset \mathcal{N}_L \qquad \text{for all} \; t \in I. $$
    For this purpose, the displacement $\eta $ must satisfy $$\sup\limits_I \Vert \eta \Vert_{L^{\infty}(\omega)} < L .$$
    However, this condition is not enough as the deformed boundary $ \partial\Omega_{\eta } $ -- even for small Hausdorff distance -- can still fold or self intersect if the normals at different points intersect. To prevent this, we need to control the derivatives of $\eta$ to ensure the mapping $\by \longmapsto \bm{\varphi}_{\eta}(t, \by) $ remains \phantom{injec} injective. More precisely: \\
    
    \item[$\bullet$] The tangential vectors of the deformed boundary must remain non-degenerate, that is, 
    \begin{equation}\label{eq:Condl}
    \partial_1\bm{\varphi}_{\eta} \times \partial_2 \bm{\varphi}_{\eta} (t, \by) \neq 0, \quad  \text{for all} \; (t, \by) \in I \times \omega . 
    \end{equation}
    Moreover, to avoid boundary flipping, that is, to ensure the orientation of the deformed boundary $\partial\Omega_\eta$ matches the reference boundary $\partial\Omega$, we require that:
    \begin{equation}\label{eq:Condr} 
    \bn(\by)\cdot \bn_{\eta(t)}(\by) > 0 \quad \text{for all }\;  (t, \by) \in I \times \omega . 
    \end{equation}
    
\end{itemize}    
Hence, up to decreasing $L > 0$, one easily deduces that \eqref{eq:Condl} and \eqref{eq:Condr} hold provided that 
\[
\sup_{t \in I} \| \eta(t, \cdot) \|_{W^{1,\infty}(\omega)} < L.     
\] 

Within the preceding geometric setup, the boundary displacement $\eta$  admits a smooth normal extension into the interior of $\Omega$  via the Hanzawa transform 
\begin{equation}\label{eq:HanzawaT}
\bfPsi_\eta(t, \bx) :=
\begin{cases}
\mathbf{p}(\bx) + \Big( s(\bx) + \eta\big(\by(\bx)\big)\phi\big(s(\bx) \big)  \Big)\bn\big(\by(\bx)\big) & \text{if } \bx \in \mathcal{N}_L, \\
\bx & \text{elsewhere},
\end{cases}
\end{equation}
where \( \phi \in C^\infty(\mathbb{R}) \) is a cut-off function satisfying \( \phi \equiv 1 \) near  the boundary, that is,   \( s \approx 0 \), and far from the boundary, that is, $s \leq -L$,  $\phi \equiv 0$, so the transformation reduces to the identity map. In other words, the deformation $\eta$ is smoothly `turned off' as we move deeper into the interior of $\Omega$.

\noindent The following proposition gives some estimates for $\bfPsi_\eta$ proved in  \cite[Section 2.4]{breit2024regularity}. 

\begin{proposition}\label{prop:estimatePsiEta}
Let \( k \in \mathbb{N} \), \( p \in [1, \infty] \), and assume that \( \eta,\, \zeta \in W^{k,p}(\omega) \). Then  \( \bfPsi_\eta \) satisfies 
\begin{align}
\Vert  \bfPsi_\eta \Vert_{W^{k,p}(\Omega)} &\lesssim 1 + \Vert \eta \Vert_{W^{k,p}(\omega)},  \label{eq:HanzEstim1}
\\
\Vert \bfPsi_\eta - \bfPsi_\zeta \Vert_{W^{k,p}(\Omega)} &\lesssim \Vert \eta - \zeta \Vert_{W^{k,p}(\omega)}, \label{eq:HanzEstim2}
\end{align}
where the hidden constants depend on the curvature of \( \partial \Omega \), and the radius \( L > 0 \).\\

\noindent Furthermore, provided that   \;$ \sup\limits_{t \in I} \Vert \eta(t, \cdot) \Vert_{W^{1,\infty}(\omega)} < L $,  the inverse map \( \bfPsi_\eta^{-1} \) exists and satisfies
\begin{align}
\Vert  \bfPsi_{\eta}^{-1} \Vert_{W^{k,p}(\Omega)} &\lesssim 1 + \Vert \eta \Vert_{W^{k,p}(\omega)}, \label{eq:HanzEstim3}
\\
\Vert \bfPsi_{\eta}^{-1}\circ\bfPsi_{\eta} - \bfPsi_{\zeta}^{-1}\circ\bfPsi_{\zeta} \Vert_{W^{k,p}(\Omega)} &\lesssim \Vert \eta - \zeta \Vert_{W^{k,p}(\omega)}. \label{eq:HanzEstim4}
\end{align}
\end{proposition}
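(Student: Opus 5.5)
The plan is to reduce everything to the tubular coordinates $(s,\by)$ of \eqref{eq:TubNeighborFinal}, in which $\bfPsi_\eta$ becomes an explicit, essentially affine function of $\eta$. Define the smooth diffeomorphism $\bm{\Lambda}(s,\by):=\bm{\varphi}(\by)+s\bn(\by)$ from $(-L,L)\times\omega$ onto $\mathcal N_L$; it is $C^{k-1}$ with inverse $\bx\mapsto(s(\bx),\by(\bx))$, with bounds depending only on the curvature of $\partial\Omega$ and on $L$. In these coordinates,
\begin{align*}
\bm{\Lambda}^{-1}\circ\bfPsi_\eta\circ\bm{\Lambda}(s,\by)=\big(s+\eta(\by)\phi(s),\,\by\big),
\end{align*}
and $\bfPsi_\eta=\mathrm{id}$ outside $\mathcal N_L$. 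Since $\phi$ vanishes for $s\le -L$, the two definitions glue smoothly. Because $\eta$ enters linearly in this coordinate map, composing with the fixed smooth maps $\bm{\Lambda}$, $\bm{\Lambda}^{-1}$ and applying the chain rule in $W^{k,p}$ (fixed smooth outer functions, and $\eta\phi$ with $\phi$ smooth) gives \eqref{eq:HanzEstim1}. Likewise $\bfPsi_\eta-\bfPsi_\zeta=(\eta-\zeta)(\by(\bx))\phi(s(\bx))\bn(\by(\bx))$ is linear in $\eta-\zeta$ with smooth coefficients, which gives \eqref{eq:HanzEstim2}. No smallness is needed for these two bounds. The $W^{k,p}(\omega)$ norm of $\eta(\by(\bx))$ over $\mathcal N_L$ is controlled by Fubini in $(s,\by)$ together with the Jacobian bounds of $\bm{\Lambda}$.

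For the inverse, I would again work in coordinates, where inverting $\bfPsi_\eta$ amounts to solving $\sigma=s+\eta(\by)\phi(s)$ for $s$ with $\by$ fixed. The partial derivative is $\partial_s(s+\eta\phi(s))=1+\eta\phi'(s)$. If $\|\eta\|_{L^\infty}<L$ and $L$ is chosen small relative to $\|\phi'\|_\infty^{-1}$ (the usual normalisation of the cut-off, decreasing $L$ if necessary), then this derivative is bounded below by a positive constant. Hence $s\mapsto s+\eta\phi(s)$ is a strictly increasing bijection of $(-L,L)$ onto its image, which equals the identity near $s=-L$. This gives a global inverse $s=S(\sigma,\by)$, and $\bfPsi_\eta^{-1}=\bm{\Lambda}\circ(S,\mathrm{id})\circ\bm{\Lambda}^{-1}$, extended by the identity. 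The $W^{1,\infty}$ condition is what additionally guarantees, through \eqref{eq:Condl}–\eqref{eq:Condr}, that the full Jacobian is uniformly invertible, $|\det\nabla\bfPsi_\eta|\gtrsim 1$. This controls the change of variables needed for $L^p$ norms.

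The main obstacle is the tame estimate \eqref{eq:HanzEstim3}, which requires bounds linear in $\|\eta\|_{W^{k,p}}$ with no powers. I would differentiate the identity $S+\eta\phi(S)=\sigma$ repeatedly. The first derivatives are $\partial_\sigma S=(1+\eta\phi'(S))^{-1}$ and $\nabla_{\by}S=-\phi(S)\nabla\eta\,(1+\eta\phi'(S))^{-1}$. Each $k$-th derivative of $S$ is then a sum of products of derivatives of $\eta$ of orders $j_1,\dots,j_m$ with $\sum j_i\le k$, multiplied by bounded smooth functions of $(S,\eta)$. The highest-order term is linear in $\nabla^k\eta$. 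The mixed terms are controlled by Gagliardo–Nirenberg interpolation, $\|\nabla^{j}\eta\|_{L^{kp/j}}\lesssim\|\eta\|_{W^{1,\infty}}^{1-\theta}\|\eta\|_{W^{k,p}}^{\theta}$ with the exponents $\theta$ summing to at most one, and $\|\eta\|_{W^{1,\infty}}<L$ is bounded. This yields $1+\|\eta\|_{W^{k,p}}$. For \eqref{eq:HanzEstim4}, I would read the left-hand side as comparing $\bfPsi_\eta^{-1}$ and $\bfPsi_\zeta^{-1}$ after pull-back. Subtracting the defining identities for $S_\eta$ and $S_\zeta$ gives
\begin{align*}
(S_\eta-S_\zeta)\Big(1+\eta\int_0^1\phi'\big(S_\zeta+\tau(S_\eta-S_\zeta)\big)\,\mathrm d\tau\Big)=-(\eta-\zeta)\phi(S_\zeta).
\end{align*}
Dividing by the positive factor and differentiating as above, with the same interpolation, gives the Lipschitz bound in $\|\eta-\zeta\|_{W^{k,p}}$. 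The constant is uniform because both $\eta$ and $\zeta$ are bounded in $W^{1,\infty}$ by $L$.
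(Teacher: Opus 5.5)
\textbf{Verdict.} Your treatment of \eqref{eq:HanzEstim1}--\eqref{eq:HanzEstim3} is essentially sound, but your version of \eqref{eq:HanzEstim4} claims a uniform Lipschitz bound that is false for $k\ge 2$. For comparison, the paper gives no proof of this proposition and only cites \cite[Section 2.4]{breit2024regularity}. Your tubular-coordinate argument is therefore a self-contained substitute. For \eqref{eq:HanzEstim1}--\eqref{eq:HanzEstim3} it works: the forward map is affine in $\eta$, and for the inverse the relation $S+\eta\phi(S)=\sigma$, Fa\`a di Bruno and Gagliardo--Nirenberg anchored at the $W^{1,\infty}$ bound give the tame estimate.

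\textbf{The gap in \eqref{eq:HanzEstim4}.} As printed, its left-hand side is $\mathrm{id}-\mathrm{id}=0$. You replace it by a Lipschitz estimate for $S_\eta-S_\zeta$, with a constant depending only on $L$. In your coordinates $\bfPsi_\eta^{-1}-\bfPsi_\zeta^{-1}$ is exactly $(S_\eta-S_\zeta)\bn(\by)$. That estimate is false for $k\ge 2$.

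The step that breaks is ``differentiating as above, with the same interpolation''. In $S_\eta-S_\zeta=-(\eta-\zeta)\phi(S_\zeta)/M$, the terms where derivatives fall on $\phi(S_\zeta)/M$ pair low derivatives of $\eta-\zeta$ with high derivatives of $\eta,\zeta$. Gagliardo--Nirenberg then splits the $W^{k,p}$-weight between $\eta-\zeta$ and $\eta,\zeta$, and Young only gives the tame bound
\[
\Vert\eta-\zeta\Vert_{W^{k,p}(\omega)}+\Vert\eta-\zeta\Vert_{W^{1,\infty}(\omega)}\big(\Vert\eta\Vert_{W^{k,p}(\omega)}+\Vert\zeta\Vert_{W^{k,p}(\omega)}\big).
\]

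\textbf{Counterexample.} The second term cannot be removed.
\begin{enumerate}
\item Take $\eta=\zeta+c$ with a small constant $c$, and $\zeta=a\varepsilon\sin(y_1/\varepsilon)$ with $a$ small. Both then have $W^{1,\infty}$ norm below $L$.
\item Set $G(S,z):=-\phi(S)/(1+z\phi'(S))$. Then $\nabla_{\by}S_\zeta=G(S_\zeta,\zeta)\nabla\zeta$ and $S_\eta-S_\zeta=c\,G(S_\zeta,\zeta)+O(c^2)$.
\item Hence $\nabla_{\by}^2(S_\eta-S_\zeta)$ contains $c\,(G\partial_SG+\partial_zG)(S_\zeta,\zeta)\nabla^2\zeta\approx 2c\,(\phi\phi')(S_\zeta)\nabla^2\zeta$.
\item On the transition layer of $\phi$ this term has $L^p$ norm of order $|c|a/\varepsilon$, whereas $\Vert\eta-\zeta\Vert_{W^{2,p}(\omega)}\sim|c|$.
\end{enumerate}
The correct statement therefore has a constant depending on $\Vert\eta\Vert_{W^{k,p}}$ and $\Vert\zeta\Vert_{W^{k,p}}$. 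This is enough for the paper's later fixed-point arguments, where everything lies in a ball. The same applies to $(\nabla\bfPsi_\eta)^{-1}=\nabla\bfPsi_\eta^{-1}\circ\bfPsi_\eta$, the quantity that actually enters $\mathbf A_\eta,\mathbf B_\eta$.

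\textbf{Cut-off normalisation.} If $\phi$ vanishes for $s\le -L$ and equals $1$ near $0$, then $\Vert\phi'\Vert_\infty>1/L$. So ``$\Vert\eta\Vert_{L^\infty}<L$, decreasing $L$ if necessary'' cannot make $1+\eta\phi'$ positive while $\phi$ is tied to the same $L$. For example, with $\eta\equiv -c$ and $c$ close to $L$, the map $s\mapsto s+\eta\phi(s)$ is not injective. You need to decouple the threshold: require $\Vert\eta\Vert_{L^\infty}\le\alpha<L$ and choose $\phi$ with $\alpha\Vert\phi'\Vert_\infty<1$.

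\textbf{Role of the $W^{1,\infty}$ hypothesis.} It is not what bounds $\det\nabla\bfPsi_\eta$ from below. Your coordinate map has a triangular Jacobian with determinant $1+\eta\phi'(s)$, which does not involve $\nabla\eta$. The Lipschitz bound is needed instead for the off-diagonal entry $\phi\nabla\eta$, that is, for $L^\infty$ control of $\nabla_{\by}S$ and $(\nabla\bfPsi_\eta)^{-1}$, and to anchor the interpolation.
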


\subsection{Local strong solutions}
 We present now
our main result concerning the well-posedness of the fluid-structure system \eqref{eq:ShellEq}--\eqref{interfaceCond2}. It concerns the local-in-time existence of a unique strong solution.
We speak about a strong solution provided its regularity is high enough to interpret all objects in the equations
as square integrable functions in space-time. Also, we require continuity in time (with values in $L^2$ in space) for the attainment of the initial data. Finally, the boundary conditions make sense in $L^2$ of space-time in the sense of traces.
Our main result is the following:

\begin{theorem}\label{theo:mainresult}
Assume the initial data $(\rho_0,\bv_0,\vartheta_0, \eta_0,\eta_*)$ satisfy 
\begin{align}
&\rho_0 \in W^{3,2}(\Omega_{\eta_0}),\qquad \bv_0,  \in W^{3,2}(\Omega_{\eta_0}), \qquad \vartheta_0  \in W^{3,2}(\Omega_{\eta_0}),   \qquad\eta_0\in W^{5,2}(\omega), \label{eq:InitialCondSpace}
\\&
\eta_*\in W^{3,2}(\omega),\qquad  \quad  \Vert\eta_0\Vert_{L^\infty(\omega)}<L, \qquad \quad     \bv_0\circ \bm{\varphi}_{\eta_0}=\eta_*\bn \;\; \text{ on } \omega. \label{eq:InitialCondInterface}
\end{align}
Then there  exists   $T_* \in I $ such that  \eqref{eq:ShellEq}--\eqref{interfaceCond2}  admits a unique strong solution  $(\rho,\bv, \vartheta,\eta)$ on $I_* := (0, T_*]$ satisfying 
\begin{align*}
&\rho\in L^{\infty}\big(I_*;W^{3,2}(\Omega_\eta)\big)\cap W^{1,\infty}\big(I_*;W^{2,2}(\Omega_\eta)\big),
\\
&\bv\in L^{2}\big(I_*;W^{4,2}(\Omega_\eta)\big)
\cap W^{2,2}\big(I_*;L^{2}(\Omega_\eta)\big),
\\
&\vartheta\in L^{2}\big(I_*;W^{4,2}(\Omega_\eta)\big)
\cap W^{2,2}\big(I_*;L^{2}(\Omega_\eta)\big),
\\
&\eta \in L^2\big(I_*;W^{6,2}(\omega)\big)  
\cap W^{3,2}\big(I_*;L^{2}(\omega)\big)
  \cap W^{2,\infty}\big(I_*;W^{1,2}(\omega)\big),
\\
&\partial_t\eta \in L^\infty\big(I_*;W^{3,2}(\omega)\big) \cap L^{2}\big(I_*;W^{4,2}(\omega)\big).
\end{align*}
\end{theorem}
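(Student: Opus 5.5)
We transform the problem to the fixed reference domain with the Hanzawa map $\bfPsi_\eta$ from \eqref{eq:HanzawaT}. We then linearise, solve three decoupled subproblems, and close with a Banach fixed point argument on a short interval $I_*$.

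\textbf{Transformation.} Set $\overline\rho=\rho\circ\bfPsi_\eta$, $\overline\bv=\bv\circ\bfPsi_\eta$ and $\overline\vartheta=\vartheta\circ\bfPsi_\eta$. With $\mathbf A_\eta=J_\eta(\nabla\bfPsi_\eta)^{-1}(\nabla\bfPsi_\eta)^{-\intercal}$, $\mathbf B_\eta=J_\eta(\nabla\bfPsi_\eta)^{-\intercal}$ and $J_\eta=\det\nabla\bfPsi_\eta$, the system \eqref{eq:ContMomentEq} becomes a system on $\Omega$ with variable coefficients. These coefficients, and the transport terms involving $\partial_t\bfPsi_\eta$, are controlled by Proposition \ref{prop:estimatePsiEta}. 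The kinematic condition \eqref{interfaceCond} becomes $\overline\bv\circ\bfvarphi=\partial_t\eta\,\bn$ on $\omega$, and \eqref{interfaceCond2} becomes a conormal Neumann condition $\mathbf A_\eta\nabla\overline\vartheta\cdot\bn=0$.

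\textbf{Linearisation and the three subproblems.} Fix a ball $B_R$ in the solution space $X_{I_*}$ defined by the regularity classes of Theorem \ref{theo:mainresult}, with compatible initial data. For a given $(\tilde\rho,\tilde\bv,\tilde\vartheta,\tilde\eta)\in B_R$ we solve three subproblems.
\begin{enumerate}
\item The transformed continuity equation with the given transporting field $\tilde\bv$ and geometry $\tilde\eta$. We use the estimates already obtained in \cite{MN} by the method of characteristics. This step yields $\rho\in L^\infty W^{3,2}\cap W^{1,\infty}W^{2,2}$, strictly positive on short times.
\item The linear momentum–shell problem with all nonlinearities moved to the right-hand side. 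Here the pressure gradient involves $\tilde\rho\tilde\vartheta$ and the shell forcing uses $\tilde\eta$. The maximal regularity theory from \cite{MN} gives $\bv$ and $\eta$ in the stated classes.
\item The new ingredient: a linear parabolic Neumann problem for $\vartheta$,
\[
c_v\tilde\rho J_{\tilde\eta}\,\partial_t\vartheta-\kappa\,\Div(\mathbf A_{\tilde\eta}\nabla\vartheta)=F(\tilde\rho,\tilde\bv,\tilde\vartheta,\tilde\eta),\qquad \mathbf A_{\tilde\eta}\nabla\vartheta\cdot\bn|_{\partial\Omega}=0 .
\end{enumerate}

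For the third subproblem I would prove $L^2W^{4,2}\cap W^{2,2}L^2$ estimates in several steps.
\begin{itemize}
\item First derive energy estimates by testing with $\partial_t\vartheta$.
\item Then differentiate the equation in time twice. This is legitimate because the coefficients satisfy $\partial_t\mathbf A_{\tilde\eta}\in L^2W^{3,2}$, given $\partial_t\tilde\eta\in L^2W^{4,2}$.
\item Finally recover spatial regularity from elliptic Neumann regularity for $\Div(\mathbf A\nabla\cdot)$, with $\partial_t\vartheta$ treated as a source.
\end{itemize}
Tangential derivatives near $\partial\Omega$ are handled via the tubular coordinates \eqref{eq:TubNeighborFinal}, and normal derivatives are recovered from the equation. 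The coefficient $\mathbf A_{\tilde\eta}$ is only as regular as $\nabla\bfPsi_{\tilde\eta}$, i.e. $L^2W^{5,2}$. In the $W^{4,2}$ estimate, products such as $\nabla^3\mathbf A\,\nabla\vartheta$ must be bounded using Sobolev embeddings and interpolation, with a small factor $T_*^\alpha$. The source $F$ contains $\mathbb S(\nabla\tilde\bv):\nabla\tilde\bv$ and $\tilde\rho\tilde\vartheta\Div\tilde\bv$. It lies in $L^2W^{2,2}\cap W^{1,2}L^2$ because $W^{3,2}$ is an algebra. The compatibility conditions at $t=0$ are met by defining $\partial_t\vartheta(0)$ from the equation with $\vartheta_0\in W^{3,2}$.

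\textbf{Fixed point.} Define $\mathcal T(\tilde\rho,\tilde\bv,\tilde\vartheta,\tilde\eta)=(\rho,\bv,\vartheta,\eta)$.
\begin{itemize}
\item \emph{Self-map.} Every nonlinear term is bounded by $C(R)\,T_*^{\alpha}$ plus data terms, so for $R$ large and $T_*$ small $\mathcal T(B_R)\subset B_R$. This also keeps $\|\eta\|_{W^{1,\infty}}<L$ and $\rho,\vartheta$ bounded below.
\item \emph{Contraction.} Estimate differences in a weaker norm, e.g. one derivative lower, using \eqref{eq:HanzEstim2} and \eqref{eq:HanzEstim4} for the coefficient differences. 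The map is then a contraction for small $T_*$.
\item \emph{Closedness.} $B_R$ is closed in the weaker norm by weak lower semicontinuity.
\end{itemize}
Banach's theorem then gives existence. Uniqueness follows from the same difference estimate applied to two strong solutions, and transforming back by $\bfPsi_\eta^{-1}$ via \eqref{eq:HanzEstim3} yields the stated classes on $\Omega_\eta$.

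The main obstacle is the top-order estimate for the temperature subproblem. Its coefficients $\mathbf A_{\tilde\eta}$ depend on the structure and have no spare regularity, and both the Neumann condition and the time-differentiated equation must be controlled with precisely the available regularity of $\tilde\eta$.
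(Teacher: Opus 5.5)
Your proposal is correct in outline, but it organises the argument differently from the paper at exactly the step you flag as the main obstacle.

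\textbf{How the paper handles the temperature.} The paper never solves a parabolic problem with time-dependent coefficients. In \eqref{eq:31.03} it freezes the operator at $t=0$, $\partial_t\overline\vartheta-\Div\big(\tfrac{\kappa}{c_vJ_{\eta_0}\overline\rho_0}\mathbf{A}_{\eta_0}\nabla\overline\vartheta\big)$. The terms $(J_{\eta_0}\overline\rho_0-J_\eta\overline\rho)\partial_t\overline\vartheta$ and $\Div((\mathbf{A}_{\eta_0}-\mathbf{A}_\eta)\nabla\overline\vartheta)$ go into the source $\mathfrak g_\eta$, and the mismatch in the conormal condition becomes inhomogeneous Neumann data $\mathfrak h_\eta$. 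The linear problem is then covered by Solonnikov's theory (\cref{prop:InternalEnerLin}), and smallness comes from $\eta-\eta_0$ and $\overline\rho-\overline\rho_0$ being small for short times. The fixed points are also nested. There is an inner one in $(\mathfrak g,\mathfrak h)$ for the temperature. The nonlinear momentum--shell problem of \cref{thm:transformedSystem1} is used as a black box. The outer map runs only in $(\overline\rho,\overline\vartheta)$, with contraction in the weaker spaces $\mathcal Y$.

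\textbf{What your route buys and costs.} You run a single fixed point in all four unknowns and freeze the temperature coefficients at the iterate, $c_v\tilde\rho J_{\tilde\eta}$ and $\mathbf{A}_{\tilde\eta}$. The gain is that the conormal condition stays homogeneous for all times. After one time differentiation, the boundary term in the test against $\partial_t^2\vartheta$ vanishes because $\partial_t(\mathbf{A}_{\tilde\eta}\nabla\vartheta)\cdot\bn=0$. You therefore avoid the paper's estimates of $\mathfrak h_\eta$ in $L^2W^{5/2,2}(\partial\Omega)\cap W^{5/4,2}L^2(\partial\Omega)$ via fractional Leibniz rules, and its extra compatibility condition on $\partial_t\mathfrak h(0)$. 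The price is threefold.
\begin{itemize}
\item You must prove maximal regularity by hand for coefficients with $\mathbf{A}_{\tilde\eta}\in L^\infty W^{3,2}$ and $\partial_t\mathbf{A}_{\tilde\eta}\in L^\infty W^{2,2}\cap L^2W^{3,2}$. You also need $\partial_t^2\mathbf{A}_{\tilde\eta}\in L^2W^{1,2}$, which appears when you integrate $\int_\Omega\partial_t\mathbf{A}_{\tilde\eta}\nabla\vartheta\cdot\nabla\partial_t^2\vartheta$ by parts in time. It is available because $\partial_t^2\tilde\eta\in L^2W^{2,2}$.
\item You must justify these computations by approximation, since $\nabla\partial_t^2\vartheta$ is not in the solution class.
\item You must check that the constants multiplying the data are independent of $R$ for small $T_*$. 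This holds because $\Vert\tilde\eta(t)-\eta_0\Vert_{W^{4,2}(\omega)}\le T_*^{1/2}R$ and $\tilde\rho J_{\tilde\eta}$ stays close to $\overline\rho_0J_{\eta_0}$. The paper's freezing at $t=0$ gives this uniformity for free.
\end{itemize}

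\textbf{Two small corrections.} First, differentiate the temperature equation in time only once and test with $\partial_t^2\vartheta$. This yields $\partial_t^2\vartheta\in L^2L^2$ and $\nabla\partial_t\vartheta\in L^\infty L^2$, and elliptic Neumann regularity then gives $\partial_t\vartheta\in L^2W^{2,2}$ and $\vartheta\in L^2W^{4,2}$. Differentiating twice would need $\partial_t^2F$, which does not exist here, since $\partial_t^2(\mathbb S(\nabla\tilde\bv):\nabla\tilde\bv)$ involves $\nabla\partial_t^2\tilde\bv$.

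Second, compatibility is not obtained by defining $\partial_t\vartheta(0)$. The class $\vartheta\in W^{2,2}L^2\cap L^2W^{4,2}\subset C(\overline{I_*};W^{3,2})$ forces $\nabla\vartheta_0\cdot\bn_{\eta_0}\circ\bm{\varphi}_{\eta_0}^{-1}=0$. The velocity--shell part likewise needs the second-order compatibility condition of \cref{thm:transformedSystem}. Both are constraints on the data and must be assumed; the paper assumes the temperature condition implicitly through its data space $\mathcal I$.
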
 
\begin{remark}
\begin{enumerate}\item[(a)]As in other papers on the local well-posedness for similar compressible fluid-structure interaction systems, the existence is only shown in a rather high topology, see \cite{MRT,maity2021existence,ngougoue2025local}.
It is still an open problem (even for isentropic fluids interacting with elastic plates) if it is possible
to obtain existence for less regular data. For fixed domains, such results can be found in \cite{ChKi} and \cite{ZhFa}, where the possibility of a vacuum is included. Classical existence results for the compressible Navier--Stokes--Fourier system work instead in ``our'' topology for $(\rho,\bfv,\vartheta)$, see \cite{Ta,Va}.
\item[(b)] In \cite{maity2021existence}
the existence of local strong solutions is studied in the framework of Besov spaces. The regularity of the underlying Besov spaces must by high enough to guarantee Lipschitz continuity of the velocity field and thus obtain a maximum principle for the density.
We expect that with some technical effort such a result
can be obtained in our set-up for elastic shells as well.
\end{enumerate}
\end{remark}

\subsection{Blow-up conditions}
In this section we present our results concerning the blow-up of strong solution. In the following subsection corresponding conditional regularity results are stated which are a consequence of the results presented here and the weak-strong result from  \cref{theo:WeakStrongUniqueMain}.

We recall that the total energy balance implies 
\begin{align}\label{eq:TotalEnergy}
&\sup\limits_{I_*} \left(  \int_{\Omega_\eta} \left(  \frac{1}{2} \rho|\bv|^2  + c_v \rho \vartheta  \right)\dx    +  \frac{1}{2} \int_{\omega} \left( |\partial_t \eta|^2 + |\Dely\eta|^2 \right) \dy  \right)   +  \int_{I_*} \int_{\omega} |\partial_t\naby\eta|^2 \dy\dt 
\leq  C_0,
\end{align} 
where \[C_0  = \dfrac{1}{2} \left(   \Vert \rho_0^{1/2}\bv_0\Vert_{L^2(\Omega_{\eta_0})}^2 +   \Vert \eta_*\Vert_{L^2(\omega)}^2    + \Vert \Dely\eta_0\Vert_{L^2(\omega)}^2          \right)    + c_v  \int_{\Omega_{\eta_0}}  \rho_0 \vartheta_0 \dx.          \]
Moreover the entropy balance\begin{align}\label{eq:entropy}
\int_{I_*}\int_{\Omega_\eta}\frac{1}{\vartheta}\Big[\mathbb S(\nabla\bfv):\nabla\bfv+\frac{\kappa|\nabla\vartheta|^2}{\vartheta}\Big]\dx\dt =  \int_{\Omega_{\eta(t)}} \rho(t)(c_v \log\vartheta(t)  - \log\rho(t))\dx\bigg|_{t=0}^{t=T_*}
\end{align}
holds for weak solutions, see \cite[equ. (2.18) with $\psi=1$]{BrSc23}.

We formulate now the blow-up conditions.

\begin{enumerate}[label={(A\arabic*)}]
    \item \label{A1}
    (\textbf{Serrin-type control of the momentum.})
    The velocity satisfies the integrability condition
    \[
        \rho^{1/2}\bv \in L^\mathtt{s}\big(I_*; L^\mathtt{r}(\Omega_\eta)\big), 
        \quad  \text{with }\;\;  \frac{2}{\mathtt{s}} + \frac{3}{\mathtt{r}} \le 1,\quad \mathtt{r} \in(3,\infty],\; \mathtt{s}\in[2,\infty).
    \]

    \item \label{A2}
    (\textbf{Control of compressibility and thermal effects.})
    The following conditions holds: \\[-0.25cm]
    \begin{enumerate}
        \item \label{A2a}
        $ \|\Div \bv\|_{L^1\big(I_*; L^\infty(\Omega_\eta)\big)} < \infty$; 
        \item[(b)] \label{A2c} 
        $ \Vert \vartheta\Vert_{L^\infty\big(I_*; L^\infty(\Omega_\eta)\big)} < \infty$.  \\[-0.15cm]
    \end{enumerate}
    \item \label{A3}
    (\textbf{Geometric regularity of the structure.})
    The shell displacement satisfies
    \[
        \eta \in L^\infty\left(I_*; C^1(\omega)\right),
    \]
    and the fluid--structure interface stays nondegenerate.        
    \item \label{A4}
     (\textbf{Beale--Kato--Majda type control}.)
 \[     \int_{I_*}\Big( \Vert \nabla\rho \Vert_{L^{\infty}(\Omega_\eta)}^2+\Vert \nabla\bv \Vert_{L^{\infty}(\Omega_\eta)}^2+\Vert \nabla\vartheta \Vert_{L^{\infty}(\Omega_\eta)}^2\Big)  \dt < \infty . \]
\end{enumerate}

Note that condition \ref{A1} is a compressible version of the classical Serrin condition for incompressible fluids. Assumptions \ref{A2} (a) directly implies boundedness (form above and below) of the density which is required for most
currently known blow-up condition for compressible fluids. The same applies for condition \ref{A2} (b) in the context of heat-dependent fluids.
Condition \ref{A3} appears also for incompressible fluids, see \cite{breit2023ladyzhenskaya}. Note that weak solutions satisfy already $\eta\in L^\infty(I_*;W^{2,2}(\omega))$ such that \ref{A3} requires only an instant of additional regularity. Under the assumptions  \ref{A1}--\ref{A3} we obtain the following result.

\begin{theorem}\label{theo:MainResult}
 Let $(\rho, \bv, \vartheta,\eta) $ be a strong solution of \eqref{eq:ShellEq}--\eqref{interfaceCond2} in the sense of \cref{theo:mainresult}.
 Suppose that Assumptions \ref{A1}--\ref{A3} hold, 
 then the following  acceleration estimate holds:
 \begin{equation}\label{eq:AccelEstimate}
\begin{aligned}
{\mathlarger{\mathtt{E}}}_{\mathrm{acc}} :=&  \sup\limits_{I_*} \int_\omega \left( |\partial_t\naby\eta|^2  +   |\naby\Dely\eta|^2\right) \dy   + \frac{\mu}{2}\sup\limits_{I_*} \int_{\Omega_\eta} |\nabla \bv|^2 \dx + \frac{\lambda +\mu}{2}\sup\limits_{I_*} \int_{\Omega_\eta} |\Div \bv|^2 \dx  
\\
&\quad +  \int_{I_*}\int_\omega \left( |\partial^2_{t}\eta|^2 + |\partial_t\Dely\eta|^2  \right) \dy\dt    +  \int_{I_*}\int_{\Omega_\eta} \left( |\nabla^2 \bv|^2 +  \rho|\partial_t \bv|^2   + |\nabla p|^2  \right) \dx\dt
\\
&\lesssim  \int_{I_*}\int_{\Omega_\eta}|\partial_t\vartheta|^2\dx\dt  +\int_\omega \left( |\naby\eta_*|^2 +  |\naby\Dely\eta_0|^2  \right) \dy\\&   + \int_{\Omega_{\eta_0}} \left( |\nabla \bv_0|^2 + |\Div \bv_0|^2     + \vartheta_0 \rho_0 \Div\bv_0  \right) \dx,
\end{aligned}
\end{equation}
where the implicit constant depends only on the norms  specified in Assumptions \ref{A1}--\ref{A3}, on $T_*$ and on the constant $C_0 > 0$ coming  from the total energy of the fluid--structure system, cf.~\eqref{eq:TotalEnergy}.

\end{theorem}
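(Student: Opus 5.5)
We would follow the strategy used for the isentropic case in \cite{MN} and for incompressible shells in \cite{breit2023ladyzhenskaya}, treating the temperature mainly as a source in the pressure. The argument combines two testings. First, we test the momentum equation with the material derivative $\partial_t\bv+(\bv\cdot\nabla)\bv$, equivalently with $\partial_t\bv$ plus a convective correction. Second, we test the shell equation \eqref{eq:ShellEq} with $\partial_t\eta$ differentiated once tangentially, i.e.\ with $-\partial_t\Dely\eta$. The kinematic condition \eqref{interfaceCond} makes the boundary terms match. The fluid side then produces $\frac{\mu}{2}\frac{\dd}{\dt}\int|\nabla\bv|^2+\frac{\lambda+\mu}{2}\frac{\dd}{\dt}\int|\Div\bv|^2+\int\rho|\partial_t\bv|^2$. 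The shell side produces $\frac{\dd}{\dt}\int_\omega\frac12(|\partial_t\naby\eta|^2+|\naby\Dely\eta|^2)$ and the dissipation $\int_\omega|\partial_t\Dely\eta|^2$. Because $\Omega_\eta$ moves, Reynolds' transport theorem creates extra boundary integrals of the type $\int_{\partial\Omega_\eta}|\nabla\bv|^2\,\partial_t\eta\,\bn\cdot\bn_\eta$. We would handle these as in \cite{breit2023ladyzhenskaya}: extend $\partial_t\eta\bn$ into $\Omega_\eta$ via the Hanzawa map $\bfPsi_\eta$ (\cref{prop:estimatePsiEta}), use \ref{A3} to control the geometry in $C^1$, and use trace and interpolation inequalities. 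The bound on $\int|\partial_t^2\eta|^2$ then follows afterwards, directly from the shell equation.

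The pressure is handled as follows. The term $-\int\nabla p\cdot\partial_t\bv$ is integrated by parts into $\int p\,\partial_t\Div\bv$ plus a boundary term. Writing $p=\rho\vartheta$, we use
\[
\int p\,\partial_t\Div\bv=\frac{\dd}{\dt}\int p\,\Div\bv-\int \partial_t p\,\Div\bv .
\]
The first piece produces the initial contribution $\int\vartheta_0\rho_0\Div\bv_0$, together with a term at time $t$ that is absorbed by Young's inequality into $\frac{\lambda+\mu}{4}\int|\Div\bv|^2$; this uses \ref{A2} to bound $p$ in $L^\infty$. In the second piece, $\partial_tp=\vartheta\partial_t\rho+\rho\partial_t\vartheta$. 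The part $\rho\partial_t\vartheta\,\Div\bv$ is bounded by $\epsilon\int|\nabla\bv|^2+C\int|\partial_t\vartheta|^2$, and this is exactly the $\partial_t\vartheta$ term on the right of \eqref{eq:AccelEstimate}. The part $\vartheta\partial_t\rho=-\vartheta\Div(\rho\bv)$ is integrated by parts once more. This gives terms of the form $\int\rho\bv\cdot\nabla(\vartheta\Div\bv)$, which are controlled by $|\nabla p|$ and $|\nabla^2\bv|$ against $\rho^{1/2}\bv$.

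To bound $\|\nabla^2\bv\|_{L^2}+\|\nabla p\|_{L^2}$, we regard the momentum equation at a fixed time as a Lamé system
\[
-\mu\Delta\bv-(\lambda+\mu)\nabla\Div\bv=-\rho\dot\bv-\nabla p ,
\]
with boundary data $\partial_t\eta\bn$ on $\partial\Omega_\eta$. We estimate $\nabla p$ through the effective viscous flux $F=(\lambda+2\mu)\Div\bv-p$ together with the vorticity. Alternatively, we use that $\nabla p=\vartheta\nabla\rho+\rho\nabla\vartheta$, with $\nabla\vartheta$ controlled by the internal energy equation in $L^2_tW^{1,2}$, since $\vartheta\in L^\infty$ and $\rho$ is bounded above and below by \ref{A2}(a). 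Elliptic regularity on the $C^1$-type domain of \ref{A3}, transported from $\Omega$ via $\bfPsi_\eta$, then yields
\[
\|\nabla^2\bv\|_{L^2}\lesssim\|\rho\dot\bv\|_{L^2}+\|\nabla p\|_{L^2}+\|\partial_t\eta\|_{W^{3/2,2}(\omega)} .
\]
The last term is interpolated between $\partial_t\naby\eta$ and $\partial_t\Dely\eta$.

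The convective terms $\int\rho(\bv\cdot\nabla)\bv\cdot\partial_t\bv$ are handled by Hölder's inequality with \ref{A1}. We bound
\[
\|\rho^{1/2}\bv\|_{L^{\mathtt r}}\|\nabla\bv\|_{L^{2\mathtt r/(\mathtt r-2)}}\|\rho^{1/2}\partial_t\bv\|_{L^2},
\]
then interpolate $\|\nabla\bv\|_{L^{2\mathtt r/(\mathtt r-2)}}$ between $\|\nabla\bv\|_{L^2}$ and $\|\nabla^2\bv\|_{L^2}$ using exponent $3/\mathtt r$. Young's inequality then leaves $C\|\rho^{1/2}\bv\|_{L^{\mathtt r}}^{\mathtt s}\|\nabla\bv\|_{L^2}^2$, which is integrable in time under $\frac2{\mathtt s}+\frac3{\mathtt r}\le1$. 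Gronwall's lemma, together with the energy bound \eqref{eq:TotalEnergy} for lower-order quantities, closes the estimate.

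The main obstacle is the moving-boundary coupling. This includes the boundary terms from the pressure and the viscous stress paired with $\partial_t\bv$ on $\partial\Omega_\eta$, which must cancel against the shell forcing. It also includes the elliptic estimate for $\nabla^2\bv$ on a domain that is only $C^1$ uniformly in time, where sharp trace interpolation is essential.
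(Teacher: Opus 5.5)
There is a genuine gap at the fluid--shell interface. Your treatment of the convective term via \ref{A1} and your splitting of $\int p\,\partial_t\Div\bv$ (with $\partial_t\rho=-\Div(\rho\bv)$ and the $\rho\,\partial_t\vartheta$ part left on the right) do match the paper.

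The problem is the pairing of test functions. Differentiating \eqref{interfaceCond} in time gives $\bigl(\partial_t\bv+(\bv\cdot\nabla)\bv\bigr)\circ\bm{\varphi}_\eta=\partial_t^2\eta\,\bn$. So the traction term that testing the momentum equation with the material derivative produces on $\partial\Omega_\eta$ is the fluid load paired with $\partial_t^2\eta$. It cancels only if the shell equation is tested with $\partial_t^2\eta$. Testing the shell with $-\partial_t\Dely\eta$ pairs the load with $\partial_t\Dely\eta$ instead. Nothing cancels, and you are left with two top-order interface terms (traces of $\nabla\bv$ and $p$ against $\partial_t^2\eta$, and against $\partial_t\Dely\eta$) that your plan never estimates.

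The paper, following MN, uses the pair $\bm{\psi}=\partial_t\bv+\mathcal{E}_\eta(\partial_t\eta\bn)\cdot\nabla\bv$ and $\partial_t^2\eta$. This puts $\int_{I_*}\int_\omega|\partial_t^2\eta|^2$ and $\sup_{I_*}\|\partial_t\naby\eta\|_{L^2}^2$ on the left. However, the bending term $\int_\omega\Dely^2\eta\,\partial_t^2\eta\,\dy=\frac{\dd}{\dt}\int_\omega\Dely\eta\,\partial_t\Dely\eta\,\dy-\int_\omega|\partial_t\Dely\eta|^2\dy$ then sends $\sup_{I_*}\|\naby\Dely\eta\|_{L^2}^2$ and $\int_{I_*}\int_\omega|\partial_t\Dely\eta|^2$ to the right-hand side; these are $\mathfrak R_{14},\mathfrak R_{15}$ in \eqref{eq:AccFirstEstimate}. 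They need a separate plate estimate with the fluid load as a source, e.g.\ \eqref{eq:MRforLemma41} with $\sigma=0$. Its right-hand side $\int_{I_*}(\|\bv\|_{W^{3/2,2}}^2+\|p\|_{W^{1/2,2}}^2)\dt$ is interpolated between the energy level and $\|\bv\|_{W^{2,2}}+\|p\|_{W^{1,2}}$, then absorbed. Your $-\partial_t\Dely\eta$ testing can play this role only as such a stand-alone estimate, in which the load is \emph{estimated}, not cancelled. Likewise, recovering $\int_{I_*}\int_\omega|\partial_t^2\eta|^2$ afterwards ``directly from the shell equation'' would require $\Dely^2\eta\in L^2(I_*;L^2(\omega))$, which you do not control.

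Your control of $\nabla p$ is also incomplete. Your displayed Lam\'e estimate keeps $\|\nabla p\|_{L^2}$ on the right. The effective viscous flux and the vorticity control $\nabla\mathfrak F$ and $\nabla\,\mathrm{curl}\,\bv$, but not $\nabla p$ and $\nabla\Div\bv$ separately. Your alternative $\nabla p=\vartheta\nabla\rho+\rho\nabla\vartheta$ is not available under \ref{A1}--\ref{A3}. Nothing controls $\nabla\rho$, and $\nabla\vartheta\in L^2_tW^{1,2}$ is exactly \cref{lem:MainResult}, which needs \ref{A4} and uses \cref{theo:MainResult} itself. The paper instead quotes from MN the coupled estimate \eqref{eq:EllipticRegularity} for \eqref{eq:BVP}. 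It bounds $\|\bv\|_{W^{2,2}}$ and $\|p\|_{W^{1,2}}$ simultaneously by $\|\rho^{1/2}\dot\bv\|_{L^2}+\|\partial_t\eta\|_{W^{3/2,2}(\omega)}$, and your argument needs an ingredient of this kind.

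Three smaller points:
\begin{enumerate}
\item Routing $\int\rho\,\bv\cdot\nabla\vartheta\,\Div\bv$ through $\nabla p$ reintroduces $\vartheta\nabla\rho$. Use instead $\nabla\vartheta\in L^2(I_*;L^2)$ from the entropy balance \eqref{eq:entropy} and \ref{A2}(b), together with \ref{A1} and interpolation.
\item $W^{2,2}$-regularity fails in general on merely $C^1$ domains. The paper combines \ref{A3} with the energy bound $\eta\in L^\infty(I_*;W^{2,2}(\omega))$, via Maz'ya--Shaposhnikova.
\item On the moving domain, $\frac{\dd}{\dt}\int_{\Omega_\eta}p\Div\bv$ carries a Reynolds boundary term. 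It cancels against the boundary term from integrating $\vartheta\Div(\rho\bv)\Div\bv$ by parts, as in the paper's computation of $\mathfrak R_{10}$.
\end{enumerate}
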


Importantly,
in the present heat-conducting model condition \ref{A4} is also required at the level of the acceleration estimate to control the temperature effects, see \cref{lem:MainResult} below.   Indeed, for a barotropic pressure law $p = a \rho^\gamma \; (a > 0, \; \gamma > 1),$ the renormalised continuity equation yields
\begin{equation}\label{eq:RenormCE}
\dot{p}  + \gamma p \Div\bv = 0 \quad (\text{with }  \dot{p} := \partial_t p  + \bv\cdot \nabla p ), 
\end{equation}
so that the pressure evolution  is completely determined by the density.  For the ideal gas law $p = \rho\theta$, however, 
\[
\dot{p}  +  p \Div\bv = \rho \dot{\theta}. 
\] 
Hence, the pressure evolution is no longer determined solely by the continuity equation, and  the \phantom{ac} acceleration estimate necessarily involves the thermal contribution $\rho \dot{\theta}$. The latter is  controlled through the  internal energy equation 
$\eqref{eq:ContMomentEq}_3$, whose viscous dissipation term $\mathbb{S}(\nabla\bv)\colon \nabla\bv$  gives rise to Assumption \ref{A4}. 

 Assumptions \ref{A1}--\ref{A4} are eventually sufficient to propagate higher-order regularity of the strong solution.
 We summarise the resulting higher-order a priori estimate in the following theorem. 

%

\begin{theorem}\label{theo:MainResult2}
 Let $(\rho, \bv, \vartheta, \eta) $ be a strong solution of \eqref{eq:ShellEq}--\eqref{interfaceCond2} in the sense of \cref{theo:mainresult}. Under Assumptions \ref{A1}--\ref{A4}, and the compatibility condition 
 \begin{equation}\label{eq:CC}
 \left(\partial_t\bv\right)\circ\bm{\varphi}_\eta  \raisebox{-1.6ex}{$\Big|_{t=0}$}  = \left( \partial^2_t \eta \right)\bn\raisebox{-1.6ex}{$\Big|_{t=0}$} \qquad \text{ on } I \times \omega  \tag{CC}
 \end{equation}
 the following a priori estimate holds:
 \begin{equation}\label{eq:HigherEstimate}
\begin{aligned}
{\mathlarger{\mathtt{E}}}_{\mathrm{high}} :=& \sup\limits_{I_*} \int_\omega \left( |\partial^2_t\naby\eta|^2  +   |\partial_t\naby\Dely\eta|^2\right) \dy   + \frac{\mu}{2}\sup\limits_{I_*} \int_{\Omega_\eta} |\partial_t\nabla \bv|^2 \dx 
\\
&\quad + \frac{\lambda +\mu}{2}\sup\limits_{I_*} \int_{\Omega_\eta} |\partial_t\Div \bv|^2 \dx  +  \int_{I_*}\int_{\Omega_\eta} \left( |\partial_t \nabla^2 \bv|^2 +  \rho|\partial^2_t \bv|^2 \right) \dx\dt 
\\
& \quad +      \int_{I_*}\int_\omega \left( |\partial^3_{t}\eta|^2 + |\partial^2_t\Dely\eta|^2 + |\partial_t\Dely^2\eta|^2 + |\Dely^3\eta|^2  \right) \dy\dt    
\\
& \quad + \int_{I_*} \int_{\Omega_\eta} \left( |\nabla^4\bv|^2+|\nabla^4\vartheta|^2  +  |\nabla^3 p|^2         \right) \dx\dt   
\\
& \quad + \sup\limits_{I_*} \int_{\Omega_\eta} \left( |\nabla^3 \bv|^2  + |\partial_t\nabla^2\vartheta|^2              + |\partial_t\nabla^2\rho|^2            \right) \dx
\\
&\lesssim  \Vert \eta_0 \Vert_{W^{5,2}(\omega)}^2 + \Vert \eta_* \Vert_{W^{3,2}(\omega)}^2   + \Vert \bv_0  \Vert_{W^{3,2}(\Omega_{\eta_0})}^2  + \Vert \rho_0  \Vert_{W^{3,2}(\Omega_{\eta_0})}^2 + \Vert \vartheta_0  \Vert_{W^{3,2}(\Omega_{\eta_0})}^2,
\end{aligned}
\end{equation}
where the hidden constant depends solely on the quantities specified in \ref{A1}--\ref{A4}, on  $T_*$, and on  the acceleration energy ${\mathlarger{\mathtt{E}}}_{\mathrm{acc}}$ defined in \eqref{eq:AccelEstimate}.
\end{theorem}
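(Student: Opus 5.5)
The plan is to differentiate the whole coupled system once in time and repeat the acceleration argument of \cref{theo:MainResult} on the differentiated system. Elliptic and Stokes-type regularity, in the transformed reference geometry, then converts the time regularity into spatial regularity. Throughout, the already available bound ${\mathlarger{\mathtt{E}}}_{\mathrm{acc}}$ from \eqref{eq:AccelEstimate} is used as a lower-order quantity. Its right-hand side contains $\int_{I_*}\int_{\Omega_\eta}|\partial_t\vartheta|^2\dx\dt$, which we first close as follows. We test the internal energy equation $\eqref{eq:ContMomentEq}_3$, written in the form $c_v\rho\dot\vartheta-\kappa\Delta\vartheta=\mathbb S(\nabla\bv):\nabla\bv-\rho\vartheta\Div\bv$ with the Neumann condition \eqref{interfaceCond2}, by $\dot\vartheta$ and by $-\Delta\vartheta$. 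The density is bounded above and below by \ref{A2}(a), and \ref{A2}(b) and \ref{A4} let us bound $\|\mathbb S(\nabla\bv):\nabla\bv\|_{L^2L^2}\lesssim\|\nabla\bv\|_{L^2L^\infty}\|\nabla\bv\|_{L^\infty L^2}$. A Gronwall argument coupled with \eqref{eq:AccelEstimate} then yields a closed bound for ${\mathlarger{\mathtt{E}}}_{\mathrm{acc}}$ together with $\sup_{I_*}\|\nabla\vartheta\|_{L^2}^2+\int_{I_*}\big(\|\partial_t\vartheta\|_{L^2}^2+\|\nabla^2\vartheta\|_{L^2}^2\big)\dt$. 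This is where the smallness of the coefficient in front of $\int|\partial_t\vartheta|^2$ must be produced by Young's inequality.

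Next, we apply $\partial_t$ (more precisely the material derivative, to respect the moving domain via the Reynolds transport theorem) to the momentum equation and to the shell equation \eqref{eq:ShellEq}. We test them with $\partial_t^2\bv$ and $\partial_t^2\eta$ respectively, and also with the time-differentiated analogues of the test functions used for \eqref{eq:AccelEstimate}. The kinematic condition \eqref{interfaceCond}, differentiated in time, makes the boundary terms cancel against the shell forcing. The initial values of $\partial_t\bv$ and $\partial_t^2\eta$ that appear are bounded by the data in \eqref{eq:HigherEstimate} through the equations at $t=0$, and the compatibility condition \eqref{eq:CC} is exactly what makes this time-differentiated energy identity legitimate at $t=0$. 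The commutator terms, which involve $\partial_t\rho$, $\partial_t p$, the domain velocity $\partial_t\bfPsi_\eta$ and the trilinear convective terms, are handled by H\"older's inequality with the bounds on $\nabla\bv,\nabla\rho,\nabla\vartheta$ from \ref{A4}, together with ${\mathlarger{\mathtt{E}}}_{\mathrm{acc}}$ and \cref{prop:estimatePsiEta}. The result is control of the first three lines of ${\mathlarger{\mathtt{E}}}_{\mathrm{high}}$ in terms of $\int\|\partial_t\nabla p\|_{L^2}^2$. This last term is estimated by differentiating the transport equations for $\rho$ and $\vartheta$.

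For the spatial regularity, we run a bootstrap. We rewrite the momentum equation as a Lam\'e system $\mu\Delta\bv+(\lambda+\mu)\nabla\Div\bv=\rho\dot\bv+\nabla p$ with Dirichlet datum $\partial_t\eta\,\bn\circ\bfvarphi_\eta^{-1}$. We rewrite the shell equation as $\Dely^2\eta=F-\partial_t^2\eta+\partial_t\Dely\eta$, and the temperature equation as a Neumann problem for $\kappa\Delta\vartheta$. Transforming each of these with $\bfPsi_\eta$, we iterate elliptic estimates in the order $\nabla^2\to\nabla^3\to\nabla^4$. Each step uses the previous one for the coefficients $\nabla\bfPsi_\eta$, and time regularity ($\partial_t\bv,\partial_t\vartheta\in L^2W^{2,2}$) supplies the right-hand sides. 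In parallel, we propagate $\nabla^3\rho$ and $\nabla^3\vartheta$ via the transport and parabolic equations, differentiated three times. The Beale--Kato--Majda quantity in \ref{A4} provides the Gronwall factor $\exp\big(\int\|\nabla\bv\|_{L^\infty}\big)$, and the estimate for $\partial_t\nabla^2\rho$ follows from the continuity equation.

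The main obstacle is the fourth-order maximal regularity for the internal energy equation, i.e.\ bounding $\int_{I_*}\|\nabla^4\vartheta\|_{L^2}^2$, in the transformed geometry. The coefficients there depend on $\nabla\bfPsi_\eta$, and the a priori information on $\eta$ coming from the acceleration estimate ($\eta\in L^\infty W^{3,2}$, $\partial_t\eta\in L^2W^{2,2}$) is only critical. For this step we would first establish $\eta\in L^2W^{6,2}\cap L^\infty W^{5,2}$ from the higher shell estimate. Only then would we treat $\vartheta$, placing the worst commutator $\nabla^3(\nabla\bfPsi_\eta)\nabla\vartheta$ in $L^2L^2$ via $\|\eta\|_{W^{4,\infty}}\lesssim\|\eta\|_{W^{6,2}}$ in two dimensions, and absorbing the remaining top-order terms after a localisation in time into short subintervals.
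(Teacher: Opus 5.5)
Your route is genuinely different from the paper's beyond the first step, and in outline it is viable.

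\textbf{First step.} This is essentially \cref{lem:MainResult}. The paper tests the internal energy equation with $\partial_t\vartheta+\mathcal{E}_\eta(\partial_t\eta\bn)\cdot\nabla\vartheta$ rather than $\dot\vartheta$, and uses elliptic $W^{2,2}$-regularity where you test with $-\Delta\vartheta$. It closes by Gr\"onwall: \eqref{eq:AccelEstimate} on $(0,t)$ bounds $\|\nabla\bv(t)\|_{L^2}^2$ by $\int_0^t\|\partial_t\vartheta\|_{L^2}^2$ plus data, and this quantity enters multiplied by the $L^1$-weight $\|\nabla\bv\|_{L^\infty}^2$. So no smallness of the coefficient of $\int|\partial_t\vartheta|^2$ is needed.

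\textbf{Higher norms.} Here the paper does not differentiate the coupled system in time on the moving domain. Following \cite[Lemma 4.1]{MN}, it pulls everything back to the fixed domain $\Omega_{\eta_0}$ by $\bfPsi_{\eta\to\eta_0}$. It subtracts $\mathcal{E}_{\eta_0}(\partial_t\eta\bn)$ to make the Dirichlet datum homogeneous and applies $L^2$-maximal regularity to \eqref{eq:CauchyPb-u}. This gives $\partial_t\bv\in L^2W^{2,2}$ and $\bv\in L^2W^{4,2}$ in one stroke, with geometry and nonlinearities entering as the perturbations $\mathsf{B}\underline{\bu}$ and $\mathsf{f}$. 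The only new term relative to the isentropic case is the pressure contribution $\nabla(\rho\vartheta)$ in $L^2W^{2,2}$. Its critical part $\vartheta\nabla\rho$ is handled by Gr\"onwall for $\|\rho\|_{W^{3,2}}$, with weights $\|\nabla\rho\|_{L^\infty}^2+\|\nabla\vartheta\|_{L^\infty}^2$ from \ref{A4}, and by absorbing a small multiple of $\int\|\vartheta\|_{W^{4,2}}^2$. The shell is treated by \eqref{eq:MRforLemma41} with a bootstrap in $\sigma$, and the temperature by the Solonnikov estimate \eqref{eq:31.03B}.

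\textbf{Comparison.} The maximal-regularity route gives one linear estimate on a fixed domain and avoids all commutators from differentiating across the moving interface. It also reuses \cite{MN} almost verbatim. Your energy route is more elementary and makes the supremum-in-time terms of $\mathtt{E}_{\mathrm{high}}$ and the role of \ref{A4} transparent. The price is many boundary commutators and elliptic estimates on $\Omega_{\eta(t)}$ at every level.

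Two details of your sketch need adjusting.

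\textbf{Test-function pairing.} The Eulerian $\partial_t\bv$ does not have trace $\partial_t^2\eta\,\bn$: differentiating \eqref{interfaceCond} produces the extra term $(\nabla\bv\circ\bm{\varphi}_\eta)\,\partial_t\eta\,\bn$. Testing the differentiated momentum equation with $\partial_t^2\bv$ and the differentiated shell equation with $\partial_t^2\eta$ therefore pairs functions with mismatched traces, and the interface terms do not cancel. Use $D_t:=\partial_t+\mathcal{E}_\eta(\partial_t\eta\bn)\cdot\nabla$, for which $(D_t\bv)\circ\bm{\varphi}_\eta=\partial_t^2\eta\,\bn$ and $(D_t^2\bv)\circ\bm{\varphi}_\eta=\partial_t^3\eta\,\bn$. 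Then pair $D_t^2\bv$ with $\partial_t^3\eta$; pairing with $\partial_t^2\eta$ only reaches the energy level of the differentiated system.

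\textbf{Order of the bootstrap.} Upgrading $\eta$ to $L^2W^{6,2}\cap L^\infty W^{5,2}$ before treating $\vartheta$ is circular as stated. The shell forcing contains $p=\rho\vartheta$, so \eqref{eq:MRforLemma41} with $\sigma=2$ needs $p\in L^2W^{5/2,2}$, and the differentiated shell equation also needs traces of $\partial_t p$. That is temperature regularity beyond \cref{lem:MainResult}.

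Moreover, the acceleration-level information you call critical contains no fourth derivatives of $\eta$, so it is insufficient for the $W^{4,2}$ temperature estimate. The critical level, as remarked after \eqref{eq:theend}, comes from \eqref{eq:MRforLemma41} with $\sigma=1/2$, whose right-hand side is controlled by \eqref{eq:AccelEstimate}. It gives $\eta\in L^2W^{9/2,2}\cap L^\infty W^{7/2,2}$ and $\partial_t\eta\in L^2W^{5/2,2}$. The paper runs the temperature estimate with this geometry, jointly with the velocity estimate, and only afterwards raises $\sigma$ to $5/2$.

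Either adopt this order, or insert an intermediate temperature level (Solonnikov at the level of \eqref{eq:31.03B.Diff}) before upgrading $\eta$. In both cases, close all the linear couplings in one joint Gr\"onwall argument.
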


\begin{remark}
In the case of fixed domains it is proved in \cite{FeWaZh} that weak solutions to the Navier--Stokes Fourier system are regular provided density, velocity and temperature stay bounded. This is the celebrated Nash's conjecture. It remains unclear if a corresponding result can be achieved in the context of fluid-structure interaction.
\end{remark}

\subsection{Weak-strong uniqueness and conditional regularity}
We conclude with a weak-strong uniqueness statement. Importantly, we do not address here the construction of finite-energy weak solutions for the coupled  heat-conducting fluid--shell system  \eqref{eq:ShellEq}--\eqref{interfaceCond2}. Nevertheless, the relevant notion of weak-solution is naturally modelled on the thermodynamic framework developed for the full compressible Navier--Stokes--Fourier system on fixed domains by \cite{feireisl2012weak}. The result below should therefore be understood as a conditional uniqueness principle asserting that whenever a finite-energy weak solution   $(\rho_1, \bv_1, \vartheta_1, \eta_1 )$ and a strong solution $(\rho_2, \bv_2, \vartheta_2, \eta_2)$ emanating from the same initial data coexist, they necessarily coincide.  

The corresponding weak-strong uniqueness property for compressible  FSI systems with barotropic fluid was established in \cite{MN} by means of a relative entropy method. In that setting, the fluid contribution to the modulated energy is based on the pressure potential associated with the isentropic constitutive law $p(\rho) = a\rho^\gamma$. The presence of temperature in the present model requires a different stability  functional. Indeed, since the pressure now depends on both the density and the temperature through  $p(\rho, \vartheta)$, the barotropic pressure potential is no longer the appropriate quantity for comparing two fluid states.  For this purpose, we introduce the following ballistic free energy 
\begin{equation}\label{eq:BallisticEner}
{\mathlarger{ \mathtt{H}_{\vartheta_2} }}  (\rho_1, \vartheta_1) :=  \rho_1 \mathbf{e}(\rho_1, \vartheta_1) - \vartheta_2 \rho_1 \mathbf{s}(\rho_1, \vartheta_1) ,
\end{equation}
with the specific internal energy 
\[
\mathbf{e}(\rho_1, \vartheta_1) :=  c_v \vartheta_1, 
\]
and specific entropy 
\[
\mathbf{s}(\rho_1, \vartheta_1) := c_v \log\vartheta_1   - \log\rho_1 + \mathbf{s}_0, 
\]
so that the Gibbs' relation 
\[
\vartheta_1 \dd\mathbf{s}(\rho_1, \vartheta_1) =   \dd\mathbf{e}(\rho_1, \vartheta_1)  + p(\rho_1, \vartheta_1) \dd\!\left( \dfrac{1}{\rho_1}\right)
\]
holds.
The thermodynamic potential \eqref{eq:BallisticEner} plays the role of the pressure potential in the isentropic theory and constitutes the natural fluid contribution to the  relative entropy functional (see e.g., \cite{feireisl2012weak}). 

Since both solutions are defined on different moving domains, we  map the strong solution  $(\rho_2, \bv_2, \vartheta_2, \eta_2)$ onto the  domain $\Omega_{\eta_1}$ of the weak solution. To this end, we introduce the Hanzawa map   
\[
\bfPsi_{\eta_2 \to \eta_1 } :=   \bfPsi_{\eta_2}\circ \left( \bfPsi_{\eta_1 }\right)^{-1} \colon \Omega_{\eta_1} \to \Omega_{\eta_2} ,
\]
and define 
\begin{equation}
\bv^{\sharp}_2 = \bv_2 \circ \bfPsi_{\eta_2 \to \eta_1 }, \quad  \rho^{\sharp}_2 = \rho_2 \circ \bfPsi_{\eta_2 \to \eta_1 }, \quad   \vartheta^{\sharp}_2 = \vartheta_2 \circ \bfPsi_{\eta_2 \to \eta_1 }. 
\end{equation}   
We then compare $(\rho_1, \bv_1, \vartheta_1,  \eta_1)$ with $(\rho^{\sharp}_2, \bv^{\sharp}_2, \vartheta^\sharp_2,  \eta_2)$. 
Accordingly,  for  $t \in I_*$, we define the relative entropy with respect to $(\rho^{\sharp}_2, \bv^{\sharp}_2, \vartheta^{\sharp}_2,  \eta_2)$  as 
\begin{equation}\label{eq:EntropyRelativeInitial}
\begin{aligned}
{\mathlarger{\EuScript{E}} }_{\mathrm{rel}}\! \left( (\rho_1, \bv_1, \vartheta_1, \eta_1) \bm{\Big|} (\rho^{\sharp}_2, \bv^{\sharp}_2, \vartheta^{\sharp}_2, \eta_2) \right) (t) & :=  \dfrac{1}{2}  \int_{\Omega_{\eta_1(t)}} \!\!\!\! \rho_1 |\bv_1 - \bv^{\sharp}_2|^2 \dx  +  \int_{\Omega_{\eta_1(t)}} \!\!\!\! {\mathlarger{\mathcal{H}}}\!\left((\rho_1, \vartheta_1)\bm{\big|}( \rho^\sharp_2, \vartheta^\sharp_2) \right) \dx  
\\[0.4em]
& \quad + \dfrac{1}{2} \int_{\omega} \bigl(  |\partial_t \eta_1 - \partial_t \eta_2|^2  + |\Dely \eta_1 - \Dely \eta_2|^2 \bigr) \dy ,  
\end{aligned}
\end{equation}
where 
\[
{\mathlarger{\mathcal{H}}}\!\left((\rho_1, \vartheta_1)\bm{\big|}( \rho^\sharp_2, \vartheta^\sharp_2) \right) :=  {\mathlarger{ \mathtt{H}_{\vartheta^\sharp_2} }}  (\rho_1, \vartheta_1) - {\mathlarger{ \mathtt{H}_{\vartheta^\sharp_2} }} (\rho^\sharp_2, \vartheta^\sharp_2) - \partial_{\rho}{\mathlarger{ \mathtt{H}_{\vartheta^\sharp_2} }} (\rho^\sharp_2, \vartheta^\sharp_2) (\rho_1 - \rho^\sharp_2).
\]
The subsequent analysis follows the strategy developed in the barotropic setting by  \cite[Section 5]{MN}. The only substantial modification is  the replacement of the barotropic pressure potential by the thermodynamic potential \eqref{eq:BallisticEner} together with the additional contribution generated by the temperature equation $\eqref{eq:ContMomentEq}_3$.  Since the argument is identical to that of \cite{feireisl2012weak} (see also \cite{MN} for the isentropic problem in fluid-structure interaction), we omit the proof and state only the resulting estimate.

\begin{proposition}\label{prop:ws}
Let  $(\rho_1, \bv_1,\vartheta_1, \eta_1)$ be a finite-energy weak solution of \eqref{eq:ShellEq}--\eqref{interfaceCond2} with initial data $(\rho^0, \bv^0,\vartheta^0, \eta^0, \eta_{*})$  in the sense of  \cite[Section 2.5, Definition 2.15]{BrSc23}. Moreover, let $(\rho_2, \bv_2, \vartheta_2,\eta_2)$ be a strong solution of \eqref{eq:ShellEq}--\eqref{interfaceCond2} with the same initial data  $(\rho_2, \bv^0,\vartheta^0,\eta^0, \eta_{*})$.  Suppose, in addition,  that 
\begin{equation*}
\eta_1 \in L^\infty\left(I, C^1(\omega) \right) .
\end{equation*}
Then the following relative energy inequality holds:
\begin{equation}\label{eq:RelativEnergyWeakL}
\begin{aligned}
&{\mathlarger{\EuScript{E}} }_{\mathrm{rel}}\! \left( (\rho_1, \bv_1,\vartheta_1, \eta_1) \bm{\Big|} (\rho^{\sharp}_2, \bv^{\sharp}_2,\vartheta_2^{\sharp}, \eta_2) \right)\!(t) + \int_0^t  \int_{\Omega_{\eta_1(t)}}  \mathbb{S}\left(\nabla\bv_1 - \nabla\bv^\sharp_2\right)\colon \left(\nabla\bv_1 - \nabla\bv^\sharp_2\right)  \dx \ds 
\\
& + \int_0^t  \int_{\Omega_{\eta_1(t)}} \kappa\bigg(\frac{\vartheta_2^{\sharp}|\nabla\vartheta_1|^2}{\vartheta_1^2}+\frac{\nabla\vartheta_1
\cdot\nabla\vartheta_2^{\sharp}}{\vartheta_1}\bigg)  \dx \ds +   \int_0^t \int_\omega \bigl|\partial_t\naby\eta_1 - \partial_t\naby\eta_2\bigr|^2 \dy\ds  
\\
&\leq  {\mathlarger{\EuScript{E}} }_{\mathrm{rel}}\! \left( (\rho_1, \bv_1,\vartheta_1, \eta_1) \bm{\Big|} (\rho^{\sharp}_2, \bv^{\sharp}_2,\vartheta_2^{\sharp}, \eta_2) \right)\!(0)   + \int_0^t  {\mathlarger{\EuScript{R}}}\!\left(\rho_1, \bv_1, \vartheta_1,\eta_1 \bm{\big|} \rho^{\sharp}_2, \bv^{\sharp}_2, \vartheta_2^{\sharp},\eta_2 \right)   \ds ,
\end{aligned}
\end{equation}
where 
\begin{align*}
{\mathlarger{\EuScript{R}}}&\!\left(\rho_1, \bv_1, \vartheta_1,\eta_1 \bm{\big|} \rho^{\sharp}_2, \bv^{\sharp}_2, \vartheta_2^\sharp,\eta_2 \right)\\ & :=  \int_{\Omega_{\eta_1(t)}} \!\!\!\!\! \mathbb{S}\left(\nabla\bv^\sharp_2\right)\!\colon \! \left(\nabla\bv^\sharp_2 - \nabla\bv_1\!\right)  \dx  +  \int_{\Omega_{\eta_1(t)}} \!\!\!\!\! \rho_1 \left(\partial_s \bv^\sharp_2 + \bv_1\cdot \nabla\bv^\sharp_2\right) \cdot \left(\!\bv^\sharp_2 - \bv_1\!\right) \dx 
\\[0.2em]
&\quad +   \int_{\Omega_{\eta_1(t)}} \left(p(\rho^\sharp_2,\vartheta_2^{\sharp})  - p(\rho_1,\vartheta_1)\right) \Div\!\left(\bv^\sharp_2\right) \dx  
\\[0.2em]
&\quad - \int_{\omega} \!(\partial_s\eta_2 - \partial_s\eta_1)\bn\cdot \!\left(p(\rho^\sharp_2,\vartheta^\sharp_2)\bn_{\eta_1}\!\right)\!\circ \bm{\varphi}_{\eta_1}  \mathrm{det}(\naby\bm{\varphi}_{\eta_1}) \dy   + \int_{\omega}\! (\partial_s\eta_2 - \partial_s\eta_1)\Dely^2\eta_2 \dy
\\[0.2em]
&\quad  + \int_{\omega} (\partial_s\eta_2 - \partial_s\eta_1)\partial_s^2\eta_2 \dy    -  \int_{\omega} (\partial_s\eta_2 - \partial_s\eta_1)\partial_t \Dely\eta_2 \dy \\[0.2em]
&\quad+\int_{\Omega_{\eta_1(t)}}\bigg(\varrho \mathbf s (\rho^\sharp_2,\vartheta^\sharp_2) \partial_{t}\vartheta^\sharp_2+ \rho_1\bu_1 \mathbf s(\rho^\sharp_2,\vartheta^\sharp_2)\cdot\nabla_x\vartheta^\sharp_2\bigg)\, \dd x\dd t \nonumber\\
&\quad+\int_{\Omega_{\eta_1(t)}} \bigg(\Big(1-\frac{\rho_1}{\rho^\sharp_2}\Big)\partial_{t}p(\rho^\sharp_2,\vartheta^\sharp_2)-\frac{\rho_1\bu_1}{\rho^\sharp_2}\cdot\nabla_x p(\rho^\sharp_2,\vartheta^\sharp_2)\bigg)\,\dd x
\end{align*}
\end{proposition}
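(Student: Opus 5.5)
The plan is the standard relative-entropy argument of \cite{feireisl2012weak}, run on the moving domain $\Omega_{\eta_1}$ as in \cite[Section 5]{MN}. The basic inputs are the energy equality for the weak solution (cf.~\eqref{eq:TotalEnergy}), the weak entropy inequality of \cite[Definition 2.15]{BrSc23} (the version of \eqref{eq:entropy} with a test function $\psi\ge0$), and the weak formulations of the continuity and momentum--shell equations. The test functions are built from the transported strong solution $(\rho^\sharp_2,\bv^\sharp_2,\vartheta^\sharp_2,\eta_2)$. The assumption $\eta_1\in L^\infty(I;C^1(\omega))$ together with \cref{prop:estimatePsiEta} guarantees that $\bfPsi_{\eta_2\to\eta_1}$ is a bi-Lipschitz map with controlled derivatives. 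Hence $\bv^\sharp_2,\rho^\sharp_2,\vartheta^\sharp_2$ inherit enough regularity (Lipschitz in space, $\partial_t$ bounded in suitable $L^p$) to be admissible test functions on $\Omega_{\eta_1}$.

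The key steps, in order, are as follows.

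(i) Expand $\EuScript{E}_{\mathrm{rel}}(t)$ as
\[
\tfrac12\rho_1|\bv_1|^2+\rho_1\mathbf e_1-\rho_1\bv_1\cdot\bv^\sharp_2+\tfrac12\rho_1|\bv^\sharp_2|^2-\vartheta^\sharp_2\rho_1\mathbf s_1-\rho_1\partial_\rho\mathtt H_{\vartheta^\sharp_2}(\rho^\sharp_2,\vartheta^\sharp_2)+p(\rho^\sharp_2,\vartheta^\sharp_2),
\]
plus the shell terms $\tfrac12|\partial_t\eta_1|^2-\partial_t\eta_1\partial_t\eta_2+\tfrac12|\partial_t\eta_2|^2$ and the corresponding $\Dely$-terms. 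Here I use $\rho\,\partial_\rho \mathtt H-\mathtt H=p$.

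(ii) Evaluate each group separately:
\begin{itemize}
\item the total energy via the energy equality of the weak solution;
\item the mixed term $\int\rho_1\bv_1\cdot\bv^\sharp_2+\int\partial_t\eta_1\partial_t\eta_2$ by testing the coupled momentum--shell weak formulation with the pair $(\bv^\sharp_2,\partial_t\eta_2)$. This pair is admissible since $\bv^\sharp_2\circ\bm\varphi_{\eta_1}=\bv_2\circ\bm\varphi_{\eta_2}=\partial_t\eta_2\bn$, by the Hanzawa construction;
\item the terms $\tfrac12\rho_1|\bv^\sharp_2|^2$ and $\rho_1\partial_\rho\mathtt H$ by testing the continuity equation with $\tfrac12|\bv^\sharp_2|^2$ and $\partial_\rho\mathtt H_{\vartheta^\sharp_2}(\rho^\sharp_2,\vartheta^\sharp_2)=c_v\vartheta^\sharp_2-\vartheta^\sharp_2\mathbf s(\rho^\sharp_2,\vartheta^\sharp_2)+\vartheta^\sharp_2$;
\item the term $-\vartheta^\sharp_2\rho_1\mathbf s_1$ by the entropy inequality tested with $\psi=\vartheta^\sharp_2>0$. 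This produces the dissipation $\int\frac{\vartheta^\sharp_2}{\vartheta_1}(\mathbb S(\nabla\bv_1):\nabla\bv_1+\kappa|\nabla\vartheta_1|^2/\vartheta_1)$ with a favourable sign, the flux term $\kappa\nabla\vartheta_1\cdot\nabla\vartheta^\sharp_2/\vartheta_1$, and $\rho_1\mathbf s_1(\partial_t+\bv_1\cdot\nabla)\vartheta^\sharp_2$.
\end{itemize}
Moving-domain boundary terms vanish or combine via the kinematic condition \eqref{interfaceCond}, the Neumann condition \eqref{interfaceCond2}, and Reynolds' transport theorem on $\Omega_{\eta_1}$.

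(iii) Add everything up and use the identity $\partial_\rho\mathtt H$ together with Gibbs' relation to regroup the pressure and entropy terms into the form of $\EuScript R$. The term $\mathbb S(\nabla\bv_1):\nabla\bv_1-2\mathbb S(\nabla\bv_1):\nabla\bv^\sharp_2$ is completed to $\mathbb S(\nabla\bv_1-\nabla\bv^\sharp_2):(\nabla\bv_1-\nabla\bv^\sharp_2)$ minus $\mathbb S(\nabla\bv^\sharp_2):(\nabla\bv^\sharp_2-\nabla\bv_1)$. The shell identities $\int\partial_t\eta_2(\ldots)$ are rewritten using the strong shell equation for $\eta_2$ on its own domain. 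This leaves exactly the boundary pressure term $p(\rho^\sharp_2,\vartheta^\sharp_2)\bn_{\eta_1}\circ\bm\varphi_{\eta_1}$ and the terms $\Dely^2\eta_2$, $\partial_s^2\eta_2$ and $\partial_t\Dely\eta_2$ appearing in $\EuScript R$.

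The main obstacle is step (ii) for the entropy: the test function $\vartheta^\sharp_2$ must be admissible for the weak entropy inequality on the moving domain, and the boundary contributions must be tracked. The entropy flux vanishes at the boundary by \eqref{interfaceCond2}, and the convective boundary flux $\rho_1\mathbf s_1(\bv_1\cdot\bn_{\eta_1}-\text{boundary velocity})$ vanishes by \eqref{interfaceCond}. I would first verify both of these within \cite[Definition 2.15]{BrSc23}, relying on the positivity and boundedness of $\vartheta^\sharp_2$ and $\rho^\sharp_2$ inherited from the strong solution; the bookkeeping for the remaining terms is then algebraic, as in \cite{feireisl2012weak}.
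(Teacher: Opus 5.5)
Your route is the one the paper invokes: it omits the proof and refers to \cite{feireisl2012weak} and \cite{MN}. Steps (i)--(ii) are fine. The formula for $\partial_\rho\mathtt H_{\vartheta^\sharp_2}(\rho^\sharp_2,\vartheta^\sharp_2)$, the identity $\rho\,\partial_\rho\mathtt H-\mathtt H=p$, and the admissibility of $(\bv^\sharp_2,\partial_t\eta_2)$ all check out.

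The gap is in step (iii). For the Navier--Stokes--Fourier system the total energy balance is conservative: viscous heating stays in $c_v\rho\vartheta$, and the only dissipation in \eqref{eq:TotalEnergy} is the shell damping. The viscous dissipation of the weak solution therefore enters only through the entropy production tested with $\vartheta^\sharp_2$, i.e.\ as $\frac{\vartheta^\sharp_2}{\vartheta_1}\mathbb S(\nabla\bv_1):\nabla\bv_1$. Against it stands one copy (not two) of $-\mathbb S(\nabla\bv_1):\nabla\bv^\sharp_2$ from the momentum test. When you complete the square to the unweighted $\mathbb S(\nabla\bv_1-\nabla\bv^\sharp_2):(\nabla\bv_1-\nabla\bv^\sharp_2)$ you silently drop the weight, and this leaves
\[
\int_0^t\int_{\Omega_{\eta_1}}\Big(1-\frac{\vartheta^\sharp_2}{\vartheta_1}\Big)\,\mathbb S(\nabla\bv_1):\nabla\bv_1\dx\ds
\]
on the right-hand side. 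This term has no sign. It cannot be absorbed by the weighted dissipation, because $\vartheta^\sharp_2/\vartheta_1$ degenerates where $\vartheta_1$ is large. It is not part of $\EuScript{R}$.

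The thermal bookkeeping does not close as claimed either, for two reasons.
\begin{enumerate}
\item Your entropy step correctly produces $-\rho_1\mathbf s(\rho_1,\vartheta_1)(\partial_t+\bv_1\cdot\nabla)\vartheta^\sharp_2$, and this term cannot be regrouped away. Gibbs' relation acts only on $\mathbf s(\rho^\sharp_2,\vartheta^\sharp_2)$. It turns the continuity test with $\partial_\rho\mathtt H$ plus $\int p(\rho^\sharp_2,\vartheta^\sharp_2)$ into $\rho_1\mathbf s(\rho^\sharp_2,\vartheta^\sharp_2)(\partial_t+\bv_1\cdot\nabla)\vartheta^\sharp_2$ and the $(1-\rho_1/\rho^\sharp_2)$-pressure terms. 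The net contribution is therefore $-\rho_1\big(\mathbf s(\rho_1,\vartheta_1)-\mathbf s(\rho^\sharp_2,\vartheta^\sharp_2)\big)(\partial_t+\bv_1\cdot\nabla)\vartheta^\sharp_2$, not what appears in $\EuScript{R}$.
\item The heat-flux cross term lands on the left as $-\kappa\nabla\vartheta_1\cdot\nabla\vartheta^\sharp_2/\vartheta_1$, with a minus sign rather than the plus sign in the display.
\end{enumerate}

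Carried out correctly, your argument yields the Feireisl--Novotn\'y form of the inequality, not the display as stated. On the left it has the weighted dissipation $\frac{\vartheta^\sharp_2}{\vartheta_1}\big(\mathbb S(\nabla\bv_1):\nabla\bv_1+\kappa|\nabla\vartheta_1|^2/\vartheta_1\big)$. On the right it has $\mathbb S(\nabla\bv_1):\nabla\bv^\sharp_2$ and the entropy-difference term above. The display seems to carry over the barotropic viscous structure of \cite{MN}.

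The passage to the difference-type dissipation $\mathcal D$ has to happen afterwards, in the remainder estimate \eqref{eq:RemainderEstimFinal}. There one uses the strong solution's own equations (in particular its entropy balance) together with two-sided bounds on $\vartheta^\sharp_2$. You should state and prove the weighted version rather than assert that the algebra reproduces $\EuScript{R}$.
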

In order to estimate the remainder we follow \cite{feireisl2012weak} for the the terms related to the temperature, while estimating the rest is in accordance with \cite{MN}. 
Setting 
\begin{align*}
{\mathlarger{\mathcal{D}}}(t)&:=\int_0^t  \int_{\Omega_{\eta_1(t)}}  \mathbb{S}\left(\nabla\bv_1 - \nabla\bv^\sharp_2\right)\colon \left(\nabla\bv_1 - \nabla\bv^\sharp_2\right)  \dx \ds\\&+ \int_0^t  \int_{\Omega_{\eta_1(t)}} \kappa\vartheta_2^{\sharp}|\nabla\log(\vartheta_1)
-\nabla\log(\vartheta_2^{\sharp})|^2  \dx \ds+   \int_0^t \int_\omega \bigl|\partial_t\naby\eta_1 - \partial_t\naby\eta_2\bigr|^2 \dy\ds  
\end{align*}
we obtain for a.e. $t \in I$, 
\begin{equation}\label{eq:RemainderEstimFinal}
 \int_0^t  {\mathlarger{\EuScript{R}}}(s)  \ds  \leq C(\varepsilon) \int_0^t \mathtt{G}(s)  {\mathlarger{\EuScript{E}} }_{\mathrm{rel}}(s)  \ds + \varepsilon {\mathlarger{\mathcal{D}}}(t) , 
\end{equation}
with $\mathtt{G} \in L^1(I)$ and arbitrary $\varepsilon > 0$.  This implies uniqueness by Gr\"onwall's inequality, provided the initial data coincide and thus implies the following: 

\begin{theorem}\label{theo:WeakStrongUniqueMain}
Let  $(\rho_1, \bv_1,\vartheta_1, \eta_1)$ be a finite-energy weak solution of \eqref{eq:ShellEq}--\eqref{interfaceCond2} with initial data\\ $(\rho_1^0, \bv_1^0,\vartheta_1^0, \eta_1^0, \eta_{*, 1})$  in the sense of  \cite[Section 2.5, Definition 2.15]{BrSc23}. Moreover, let $(\rho_2, \bv_2, \vartheta_2,\eta_2)$ be a strong solution of \eqref{eq:ShellEq}--\eqref{interfaceCond2} with initial data  $(\rho_2^0, \bv_2^0,\vartheta_2^0,\eta_2^0, \eta_{*, 2})$.  Suppose, in addition,  that 
\begin{equation*}
\eta_1 \in L^\infty\left(I, C^1(\omega) \right) .
\end{equation*}
Then we have $(\rho_1, \bv_1,\vartheta_1, \eta_1)=(\rho_2, \bv_2, \vartheta_2,\eta_2)$.
\end{theorem}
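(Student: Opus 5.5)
Theorem \ref{theo:WeakStrongUniqueMain} is stated for general initial data, but the intended reading (as in Proposition \ref{prop:ws}) is that both solutions start from the same data. I would reduce to that situation: assume $(\rho_1^0,\bv_1^0,\vartheta_1^0,\eta_1^0,\eta_{*,1})=(\rho_2^0,\bv_2^0,\vartheta_2^0,\eta_2^0,\eta_{*,2})$. Then $\Omega_{\eta_1(0)}=\Omega_{\eta_2(0)}$, so $\bfPsi_{\eta_2\to\eta_1}(0)$ is the identity and ${\EuScript{E}}_{\mathrm{rel}}(0)=0$. The reason is that $\mathcal H((\rho,\vartheta)|(\rho,\vartheta))=0$ while the velocity and shell differences vanish.

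The argument then chains three facts. First, I invoke the relative energy inequality \eqref{eq:RelativEnergyWeakL} from Proposition \ref{prop:ws}; this is legitimate because $\eta_1\in L^\infty(I;C^1(\omega))$ is assumed, which makes the Hanzawa transport $\bfPsi_{\eta_2\to\eta_1}$ bi-Lipschitz. Second, I use the remainder estimate \eqref{eq:RemainderEstimFinal}. Third, I check that the dissipation on the left-hand side of \eqref{eq:RelativEnergyWeakL} dominates $\mathcal D(t)$ up to a term that can itself be absorbed into the remainder. For the thermal term this uses the algebraic identity
\begin{align*}
\frac{\vartheta_2^\sharp|\nabla\vartheta_1|^2}{\vartheta_1^2}-\frac{\nabla\vartheta_1\cdot\nabla\vartheta_2^\sharp}{\vartheta_1}
=\vartheta_2^\sharp\big|\nabla\log\vartheta_1-\nabla\log\vartheta_2^\sharp\big|^2+\big(\nabla\log\vartheta_1-\nabla\log\vartheta_2^\sharp\big)\cdot\nabla\vartheta_2^\sharp ,
\end{align*}
where the sign convention is chosen to match the derivation in \cite{feireisl2012weak}. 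The last term is linear in the difference of the log-gradients. It is treated by Young's inequality, using that $\vartheta_2^\sharp$ is bounded below and $\nabla\vartheta_2^\sharp\in L^2(I;L^\infty)$ for the strong solution. The result is controlled by $\varepsilon\mathcal D$ plus a term of the form $\mathtt G\,{\EuScript{E}}_{\mathrm{rel}}$; this is where coercivity of $\mathcal H$ enters, as in the remainder treatment.

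Putting these together gives
$$ {\EuScript{E}}_{\mathrm{rel}}(t)+\mathcal D(t)\le C(\varepsilon)\int_0^t\mathtt G(s){\EuScript{E}}_{\mathrm{rel}}(s)\ds+\varepsilon\mathcal D(t). $$
Choosing $\varepsilon=1/2$ absorbs the dissipation. Gr\"onwall's inequality with $\mathtt G\in L^1(I)$ then yields ${\EuScript{E}}_{\mathrm{rel}}\equiv0$ on $I_*$.

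From ${\EuScript{E}}_{\mathrm{rel}}\equiv0$ I read off the identifications, starting with the shell. The vanishing shell part gives $\partial_t\eta_1=\partial_t\eta_2$, and since the initial data agree, $\eta_1=\eta_2$. Hence $\bfPsi_{\eta_2\to\eta_1}=\mathrm{id}$ and all sharps drop: $\rho_2^\sharp=\rho_2$, and likewise for $\bv_2^\sharp$ and $\vartheta_2^\sharp$.

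Next, strict convexity of $(\rho,\vartheta)\mapsto\mathtt H_{\vartheta_2}(\rho,\vartheta)$ gives $\mathcal H\ge0$ with equality only when $(\rho_1,\vartheta_1)=(\rho_2,\vartheta_2)$. The relevant inequality is $\mathcal H\gtrsim |\rho_1-\rho_2|^2+|\vartheta_1-\vartheta_2|^2$ on the essential range, plus linear growth outside it. Here I use that the strong solution has $\rho_2,\vartheta_2$ bounded above and below on $I_*$. Finally, $\rho_1|\bv_1-\bv_2|^2=0$ with $\rho_1=\rho_2>0$ gives $\bv_1=\bv_2$.

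I expect the main obstacle to be justifying that the dissipation bound is uniform when the domains differ. Specifically, $\mathcal D$ must control $\|\nabla(\bv_1-\bv_2^\sharp)\|_{L^2}$ on $\Omega_{\eta_1}$ through a Korn inequality whose constant depends on $\|\eta_1\|_{C^1}$. Only the $L^\infty(C^1)$ hypothesis keeps this constant bounded, so that is the ingredient I would check first when verifying that \eqref{eq:RemainderEstimFinal} applies with $\mathtt G\in L^1(I)$.
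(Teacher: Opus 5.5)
Your overall scheme matches the paper's: same initial data so that $\EuScript{E}_{\mathrm{rel}}(0)=0$, the relative energy inequality \eqref{eq:RelativEnergyWeakL}, the remainder bound \eqref{eq:RemainderEstimFinal}, Gr\"onwall, then reading off the identities. Your algebraic identity for the thermal dissipation is also correct with the sign of \cite{feireisl2012weak}.

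The gap is in your third step, where you dispose of the cross term
\[
\kappa\big(\nabla\log\vartheta_1-\nabla\log\vartheta_2^\sharp\big)\cdot\nabla\vartheta_2^\sharp
\]
by Young's inequality. This term is \emph{linear} in the difference of the two solutions. Its coefficient $\nabla\vartheta_2^\sharp$ does not vanish when the solutions coincide. By contrast, $\varepsilon\mathcal{D}+\mathtt{G}\,\EuScript{E}_{\mathrm{rel}}$ is quadratic in the difference, so no inequality can bound the former by the latter. Young's inequality only gives
\[
\varepsilon\kappa\vartheta_2^\sharp|\nabla\log\vartheta_1-\nabla\log\vartheta_2^\sharp|^2+c(\varepsilon)\kappa|\nabla\vartheta_2^\sharp|^2/\vartheta_2^\sharp ,
\]
and the last term is a genuine source term. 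Gr\"onwall then yields at best $\EuScript{E}_{\mathrm{rel}}(t)\le C\int_0^t\|\nabla\vartheta_2^\sharp\|_{L^2}^2\,\mathrm{d}s$, not $\EuScript{E}_{\mathrm{rel}}\equiv0$. Coercivity of $\mathcal{H}$ cannot help, since $\mathcal{H}$ contains no gradients.

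The missing idea is the cancellation at the heart of the relative entropy method, which is what ``following \cite{feireisl2012weak} for the temperature terms'' refers to. Linear terms are not estimated; they are eliminated using the equations satisfied by the strong solution.
\begin{enumerate}
\item Expand the temperature terms of $\mathcal{R}$ (those containing $\partial_t\vartheta_2^\sharp$, $\nabla\vartheta_2^\sharp$, $\partial_t p(\rho_2^\sharp,\vartheta_2^\sharp)$, $\nabla p(\rho_2^\sharp,\vartheta_2^\sharp)$) to first order around $(\rho_2^\sharp,\vartheta_2^\sharp)$.
\item Insert $\eqref{eq:ContMomentEq}_3$ for $\vartheta_2$, transported to $\Omega_{\eta_1}$. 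This creates geometric error terms proportional to $\eta_1-\eta_2$.
\item Among other terms, this produces $-\kappa\int_{\Omega_{\eta_1}}\frac{\vartheta_1-\vartheta_2^\sharp}{\vartheta_2^\sharp}\Delta\vartheta_2^\sharp\,\mathrm{d}\mathbf{x}$ on the right-hand side.
\item Integrate this by parts using the Neumann condition and move it to the left. The thermal dissipation then becomes (sharps suppressed)
\[
\frac{\vartheta_2|\nabla\vartheta_1|^2}{\vartheta_1^2}-\Big(\frac{1}{\vartheta_1}+\frac{1}{\vartheta_2}\Big)\nabla\vartheta_1\cdot\nabla\vartheta_2+\frac{\vartheta_1|\nabla\vartheta_2|^2}{\vartheta_2^2}
=\vartheta_2\big|\nabla\log\vartheta_1-\nabla\log\vartheta_2\big|^2+(\vartheta_2-\vartheta_1)\nabla\log\vartheta_2\cdot\big(\nabla\log\vartheta_1-\nabla\log\vartheta_2\big).
\]
\end{enumerate}
The last term is now quadratic in the difference, and only at this point does Young's inequality close.

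Even then, $\mathcal{H}$ controls $|\vartheta_1-\vartheta_2^\sharp|^2$ only with the weight $\rho_1$. So this quadratic term needs the additional argument of \cite{feireisl2012weak}, which also uses $\mathcal{D}$; coercivity of $\mathcal{H}$ alone is not enough. By comparison, the Korn-constant issue you single out as the main obstacle is routine. This cancellation is the actual crux of the temperature part.
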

In combination with \cref{theo:MainResult2} we obtain the following:
\begin{corollary}\label{cor:WeakStrongUniqueMain}
Let the initial data $(\rho_0, \bv_0, \vartheta_0,  \eta_0, \eta_*)$ satisfy \eqref{eq:InitialCondSpace}--\eqref{eq:InitialCondInterface}, and  let $(\rho, \bv, \vartheta,  \eta)$ be a finite-energy weak solution of \eqref{eq:ShellEq}--\eqref{interfaceCond2} in the sense of  \cite[Section 2.5, Definition 2.15]{BrSc23}.  Moreover, assume  that Assumptions  \ref{A1}--\ref{A4}, and  \eqref{eq:CC} hold on every finite interval $I = (0, T)$. \\
Then  $(\rho, \bv, \vartheta, \eta)$ is a strong solution on $I$ in the sense of \cref{theo:mainresult}.  In particular, $(\rho, \bv, \vartheta,  \eta)$ is unique in the class of weak solutions with deformation in $L^{\infty}\left( I; C^{1}(\omega) \right)$.
\end{corollary}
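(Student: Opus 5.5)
The argument is a continuation-plus-uniqueness scheme. The data satisfy \eqref{eq:InitialCondSpace}--\eqref{eq:InitialCondInterface}, so \cref{theo:mainresult} gives a unique strong solution $(\rho_2,\bv_2,\vartheta_2,\eta_2)$ on some $(0,T_*]$ with the same initial data as the weak solution $(\rho,\bv,\vartheta,\eta)$. By \ref{A3}, the weak solution satisfies $\eta\in L^\infty(I;C^1(\omega))$. Hence \cref{theo:WeakStrongUniqueMain} applies with $(\rho_1,\bv_1,\vartheta_1,\eta_1)=(\rho,\bv,\vartheta,\eta)$ and yields $(\rho,\bv,\vartheta,\eta)=(\rho_2,\bv_2,\vartheta_2,\eta_2)$ on $(0,T_*]$. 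In particular, Assumptions \ref{A1}--\ref{A4}, which hold for the weak solution, also hold for the strong solution on $(0,T_*]$, with bounds independent of $T_*$ (they are the norms on the whole interval $I$).

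Let $T^{\max}$ be the supremum of times up to which the weak solution coincides with a strong solution in the sense of \cref{theo:mainresult}, and suppose $T^{\max}<T$. On every $(0,T')$ with $T'<T^{\max}$ we apply \cref{theo:MainResult}, together with the estimate of $\int\int|\partial_t\vartheta|^2$ supplied under \ref{A4} (cf.\ \cref{lem:MainResult}). This bounds ${\mathtt{E}}_{\mathrm{acc}}$ uniformly in $T'$. \cref{theo:MainResult2}, using \eqref{eq:CC}, then gives ${\mathtt{E}}_{\mathrm{high}}$ bounded uniformly in $T'$ by the initial norms. From ${\mathtt{E}}_{\mathrm{high}}$ and interpolation we obtain uniform bounds on $\sup_t\big(\|\rho(t)\|_{W^{3,2}}+\|\bv(t)\|_{W^{3,2}}+\|\vartheta(t)\|_{W^{3,2}}+\|\eta(t)\|_{W^{5,2}}+\|\partial_t\eta(t)\|_{W^{3,2}}\big)$. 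For $\rho$ we combine the bound on $\nabla^3 p$ with the bounds on $\vartheta$ and the two-sided bound on $\rho$, the lower bound coming from \ref{A2}(a) via the continuity equation; for $\eta$ we use the bounds on $\Delta_{\mathbf y}^3\eta$ and $\partial_t\Delta_{\mathbf y}^2\eta$ integrated in time. Condition \ref{A3} keeps $\|\eta\|_{L^\infty}<L$ and the geometry nondegenerate. These bounds also give time continuity, so the solution extends to $t=T^{\max}$ with data in the class \eqref{eq:InitialCondSpace}, and the kinematic compatibility in \eqref{eq:InitialCondInterface} is inherited from the trace condition \eqref{interfaceCond}.

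We then restart \cref{theo:mainresult} from $T^{\max}$. Its existence time depends only on the size of the data, which is uniformly controlled, so the strong solution extends beyond $T^{\max}$. Applying \cref{theo:WeakStrongUniqueMain} again (on $(T^{\max},T^{\max}+\delta)$, or on $(0,T^{\max}+\delta)$ after concatenation) shows that it coincides with the weak solution, contradicting the maximality of $T^{\max}$. Hence $T^{\max}=T$, and the weak solution is strong on $I$. Uniqueness in the class of weak solutions with $\eta\in L^\infty(I;C^1(\omega))$ follows from a final application of \cref{theo:WeakStrongUniqueMain}.

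The main obstacle is restarting at $T^{\max}$. \cref{theo:MainResult2} is stated with constants depending on the data at $t=0$. For the continuation we need that the existence time in \cref{theo:mainresult} depends only on norms of the data, and that \eqref{eq:CC} at the restart time holds automatically, since it follows from the equations for a strong solution. If the dependence on data size is not explicit, we would instead argue directly by compactness: the uniform ${\mathtt{E}}_{\mathrm{high}}$ bounds on $(0,T^{\max})$ already place the solution in the regularity class of \cref{theo:mainresult} on $(0,T^{\max}]$, and the extension follows by the standard local argument.
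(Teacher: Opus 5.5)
Your proposal is correct and is the continuation argument the paper has in mind when it says the corollary follows ``in combination with \cref{theo:MainResult2}''. The steps are: local existence from \cref{theo:mainresult}; identification with the weak solution via \cref{theo:WeakStrongUniqueMain}, using \ref{A3}; and the uniform bounds of \cref{theo:MainResult}, \cref{lem:MainResult} and \cref{theo:MainResult2} to rule out blow-up before $T$. One detail needs fixing: $\sup_t\|\rho(t)\|_{W^{3,2}}$ cannot be read off from $\nabla^3 p$, since that is only $L^2$ in time, and recovering $\nabla^3\rho$ from $p=\rho\vartheta$ would need a lower bound on $\vartheta$, not on $\rho$. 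It comes instead from the transport estimate for the continuity equation, i.e.\ the $\sup_{I_*}\|\rho\|_{W^{3,2}}$ term in \eqref{eq:VelocityCondEstimate}.
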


\section{Local well-poesedness}
\label{sec:well}

The aim of this section is the proof of \cref{theo:mainresult}. The proof will be completed in \cref{sec:localstrongfixedpoint}. Our strategy is
based on transformation to the fixed reference domain, linearisation and a fixed point argument.
As a preparation we consider several subproblems with fixed geometry:
\begin{itemize}
\item The continuity subproblem,
where the velocity field is given.
\item The momentum-structure subproblem, where all nonlinear terms are prescribed through the given data (geometry, density, velocity and temperature)
\item The internal energy subproblem, where the density and velocity field are given.
\end{itemize}
Note that this scheme differs from the approach for plates in \cite{maity2021existence}.

\subsection{The continuity subproblem} \label{sec:continuitysubprob}


For a known flexible domain $\Omega_\zeta $ and a known velocity field $\bu, $ we aim in this section to construct a strong  solution to the subproblem
\begin{equation}\label{rhoEquAlone}
\partial_t \rho +  (\bu\cdot\nabla )\rho = -\rho\, \divx\bu
\end{equation} 
in $I\times\Omega_\zeta\subset \mathbb R^{1+3}$ subject to the initial condition
\begin{align} 
&\rho(0,\cdot)=\rho_0(\cdot) &\text{in }\Omega_{\zeta(0)}.
\label{initialCondSolvSubPro}  
\end{align} 
Although a characteristic-based existence theory on the evolving domain $\Omega_{\zeta}$ is, in principle, feasible, it necessitates nontrivial control of trajectories to ensure they remain within $\Omega_{\zeta(t)}$ for all suitable time $t \in I$. To avoid this geometric subtlety, we reformulate the problem on the reference configuration $\Omega$ via the Hanzawa transform, see \eqref{eq:HanzawaT}. For this purpose, we introduce the pull-back variables $\overline\rho := \rho\circ\bfPsi_{\zeta}, \; \overline\bu := \bu\circ\bfPsi_{\zeta}$ and define $\mathbf{B}_{\zeta} = J_\zeta \left(\nabx \bfPsi_\zeta^{-1}\circ \bfPsi_\zeta\right)^\intercal$ where  $ J_\zeta=\mathrm{det}(\nabla\bfPsi_\zeta)$. 

\noindent Thus, the pull-back density $\overline\rho$ solves 
\begin{equation}\label{rhoEquAloneTransform}
\partial_t \overline\rho +  (\overline{\bu}_{eff} \cdot\nabla )\overline\rho  + \delta_{\Div}\,\overline\rho = 0,  
\end{equation} 
in $I\times\Omega \subset \mathbb R^{1+3}$, subject to the initial condition
\begin{align} 
&\overline{\rho}(0,\cdot)=\overline{\rho}_0(\cdot) &\text{in }\Omega,
\label{initialCondSolvSubProTransform}  
\end{align} 
where the effective transport field is given by
\begin{equation}
\overline{\bu}_{eff} := \partial_t \bfPsi_{\zeta}^{-1}\circ \bfPsi_{\zeta} + \dfrac{1}{J_{\zeta} } \mathbf{B}_{\zeta}^\intercal \overline\bu,
\end{equation}
and the local rate of compression -- that is, the transformed divergence of the Eulerian velocity field $\bu$ -- is defined as
\begin{equation}
\delta_{\Div} := \dfrac{1}{J_{\zeta} } \mathbf{B}_{\zeta} \colon \nabla\overline\bu.
\end{equation}
To proceed rigorously, we first introduce the notion of a strong solution.

\begin{definition} 
\label{def:strsolmartFP}
Let the data triple  $(\overline{\rho}_{0}, \overline\bu, \zeta)$ satisfy:
\begin{equation}
\begin{aligned}
\label{fokkerPlanckDataAlone}
&\overline{\rho}_{0} \in  W^{3,2}( \Omega ), 
\\&
\overline\bu\in
W^{2,2}\big( I;L^2(\Omega)  \big) \cap L^2\big(I;W^{4,2}(\Omega )  \big),
\\
&\zeta\in W^{2,\infty}\big(I;W^{1,2}(\omega)  \big) \cap W^{3,2}\big(I;L^{2}(\omega)  \big)\cap L^{2}\big(I;W^{6,2}(\omega)  \big)
,
\\& \overline\bu  \circ \bm{\varphi} =(\partial_t\zeta)\bn
\quad \text{on }I \times \omega,  \quad\|\zeta\|_{L^\infty(I\times\omega)} < L, .
\end{aligned}
\end{equation}
We call
$\overline\rho$
a \textit{strong solution} of   \eqref{rhoEquAloneTransform}--\eqref{initialCondSolvSubProTransform}  with dataset $(\overline{\rho}_0, \overline\bu, \zeta)$ if 
\begin{itemize}
\item[(a)] $\overline\rho \in   W^{1,\infty} \big(I; W^{2,2}(\Omega ) \big)\cap L^\infty\big(I;W^{3,2}(\Omega )  \big)$; 
\item[(b)] Equation \eqref{rhoEquAloneTransform} holds a.e. in $I\times\Omega$.
\end{itemize}
\end{definition}


\noindent We now formulate the result on the existence of a unique strong solution to the pull-back continuity equation \eqref{rhoEquAloneTransform}--\eqref{initialCondSolvSubProTransform} from \cite[Theorem 2.3]{ngougoue2025local}.

\begin{theorem}\label{thm:mainFP}
Let  $(\overline{\rho}_0,  \overline\bu, \zeta)$ satisfy  \eqref{fokkerPlanckDataAlone}.
Then there exists a unique strong solution $\overline\rho $ to the pull-back continuity equation  \eqref{rhoEquAloneTransform}--\eqref{initialCondSolvSubProTransform}, in the sense of \cref{def:strsolmartFP}, 
 such that
 \begin{equation}
\begin{aligned} \label{eq:ContSubProbEstimate}
\sup_{t\in I} &\Big( \Vert \overline{\rho}\Vert_{W^{3,2}(\Omega )}^2 
+
\Vert \partial_t\overline{\rho}\Vert_{W^{2,2}(\Omega )}^2 
\Big)
\\&\lesssim
 \Vert  \overline{\rho}_0\Vert_{W^{3,2}(\Omega)}^2  
\Bigg(1 + \sup\limits_I \Vert \partial_t \zeta \Vert_{W^{3,2}(\omega)}^2  + \int_I\Vert  \overline\bu
\Vert_{W^{4,2}(\Omega )}^2 \dt
\\
&\quad + 
\int_I\Vert  \partial_{t}^2 \overline\bu
\Vert_{L^{2}(\Omega)}^2 \dt
\Bigg)
  \exp{\bigg( c\int_I \big(\Vert \partial_t \zeta \Vert_{W^{4,2}(\omega)} + 
\Vert  \overline\bu\Vert_{W^{4,2}(\Omega)}\big)  \dt \bigg)}
\end{aligned}
\end{equation} 
holds for some constant $c = c(L) > 0$. 
\end{theorem}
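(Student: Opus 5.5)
The proof of \cref{thm:mainFP} proceeds by the method of characteristics on the fixed reference domain $\Omega$, followed by energy estimates for spatial derivatives. The key structural fact is that the effective transport field $\overline{\bu}_{eff}$ is tangential on $\partial\Omega$. By the kinematic condition $\overline\bu\circ\bm{\varphi}=(\partial_t\zeta)\bn$ and the construction of the Hanzawa transform \eqref{eq:HanzawaT}, the normal part of $\partial_t\bfPsi_\zeta^{-1}\circ\bfPsi_\zeta$ cancels the normal part of $J_\zeta^{-1}\mathbf{B}_\zeta^\intercal\overline\bu$ on the boundary. Hence $\overline{\bu}_{eff}\cdot\bn=0$ on $I\times\partial\Omega$, no inflow boundary condition is needed, and the flow map of $\overline{\bu}_{eff}$ leaves $\overline\Omega$ invariant.

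\textbf{Step 1: Lipschitz bounds on the coefficients.} Using \cref{prop:estimatePsiEta} together with Sobolev embeddings, we bound
\[
\|\overline{\bu}_{eff}\|_{W^{3,2}(\Omega)}+\|\delta_{\Div}\|_{W^{3,2}(\Omega)}\lesssim \|\partial_t\zeta\|_{W^{4,2}(\omega)}+\|\overline\bu\|_{W^{4,2}(\Omega)},
\]
up to factors depending on $\sup_I\|\zeta\|_{W^{4,2}}$, which are bounded by the assumed regularity of $\zeta$. Since $W^{3,2}\hookrightarrow W^{1,\infty}$, the right-hand side controls $\nabla\overline{\bu}_{eff}$ in $L^1(I;L^\infty)$. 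Existence and uniqueness then follow by explicit integration along characteristics:
\[
\overline\rho(t,X(t,x))=\overline\rho_0(x)\exp\Big(-\int_0^t\delta_{\Div}(s,X(s,x))\,\mathrm ds\Big).
\]
Uniqueness also follows from the linear $L^2$ energy estimate applied to the difference of two solutions.

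\textbf{Step 2: A priori estimate.} For a multi-index $|\alpha|\le 3$, apply $\partial^\alpha$ to \eqref{rhoEquAloneTransform}, test with $\partial^\alpha\overline\rho$, and integrate by parts. The boundary term vanishes because $\overline{\bu}_{eff}\cdot\bn=0$; this should be done with tangential/normal derivatives or on a mollified problem to remain rigorous. The commutators $[\partial^\alpha,\overline{\bu}_{eff}\cdot\nabla]\overline\rho$ and $\partial^\alpha(\delta_{\Div}\overline\rho)$ are bounded by
\[
\big(\|\overline{\bu}_{eff}\|_{W^{3,2}}+\|\delta_{\Div}\|_{W^{3,2}}\big)\|\overline\rho\|_{W^{3,2}}^2,
\]
using Kato--Ponce type product estimates in three dimensions (for instance, $\nabla^3\overline{\bu}_{eff}\,\nabla\overline\rho$ with $\nabla\overline\rho\in L^\infty$ is not available, so we use $L^4\times L^4$ with $W^{3,2}\hookrightarrow W^{1,4}$ pairings, or $\nabla^3 u\in L^2$ times $\nabla\rho\in L^\infty$ and $\nabla\rho\in W^{2,2}\hookrightarrow L^\infty$). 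Gr\"onwall's inequality then yields the bound on $\sup_t\|\overline\rho\|_{W^{3,2}}^2$ with the exponential factor appearing in \eqref{eq:ContSubProbEstimate}.

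\textbf{Step 3: Time derivative.} Read $\partial_t\overline\rho=-\overline{\bu}_{eff}\cdot\nabla\overline\rho-\delta_{\Div}\overline\rho$ directly from the equation and estimate it in $W^{2,2}$ by
\[
\big(\|\overline{\bu}_{eff}\|_{W^{2,2}}+\|\delta_{\Div}\|_{W^{2,2}}\big)\|\overline\rho\|_{W^{3,2}}.
\]
This requires pointwise-in-time control of $\overline\bu\in W^{2,2}$ (actually $W^{3,2}$). That control comes from interpolating $L^2(I;W^{4,2})\cap W^{2,2}(I;L^2)\hookrightarrow W^{1,2}(I;W^{2,2})\hookrightarrow L^\infty(I;W^{3,2})$, which is exactly why $\int_I\|\overline\bu\|_{W^{4,2}}^2$ and $\int_I\|\partial_t^2\overline\bu\|_{L^2}^2$ enter the prefactor. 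Similarly, $\sup_I\|\partial_t\zeta\|_{W^{3,2}}$ controls $\partial_t\bfPsi_\zeta$ pointwise in time.

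\textbf{Main obstacle.} The hardest point is the top-order commutator, which involves $\nabla^3$ of the coefficients. This forces $\partial_t\zeta\in L^1(I;W^{4,2})$ and $\overline\bu\in L^1(I;W^{4,2})$ in the exponent, together with a rigorous justification of the boundary cancellation at the $W^{3,2}$ level. To handle this, I would first regularise the data, prove the estimate for smooth solutions, and pass to the limit using the linear structure of the equation.
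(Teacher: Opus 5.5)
The paper does not prove this theorem itself; it quotes it from \cite[Theorem 2.3]{ngougoue2025local}, so there is no in-text argument to compare with. Judged on its own, your route is the standard one and it is sound: Lagrangian existence, $W^{3,2}$ energy estimates with commutators and Gr\"onwall, reading $\partial_t\overline\rho$ off the equation, and uniqueness from an $L^2$ estimate.

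\textbf{The boundary flux.} You can say more than tangency. Since $\partial_t\bfPsi_\zeta^{-1}\circ\bfPsi_\zeta=-(\nabla\bfPsi_\zeta)^{-1}\partial_t\bfPsi_\zeta$ and $J_\zeta^{-1}\mathbf{B}_\zeta^\intercal=(\nabla\bfPsi_\zeta)^{-1}$, one has
\[
\overline{\bu}_{eff}=(\nabla\bfPsi_\zeta)^{-1}\big(\overline\bu-\partial_t\bfPsi_\zeta\big).
\]
By \eqref{eq:HanzawaT}, $\partial_t\bfPsi_\zeta=(\partial_t\zeta)\bn=\overline\bu$ on $\partial\Omega$, so $\overline{\bu}_{eff}\equiv 0$ there. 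This has several consequences:
\begin{itemize}
\item Invariance of $\overline\Omega$ under the flow is trivial.
\item The boundary flux vanishes for every Cartesian $\partial^\alpha$. No tangential/normal splitting is needed, because $\partial^\alpha\overline\rho$ solves a transport equation with the same field.
\item It tells you how to regularise. Approximate $\overline{\bu}_{eff}$ in $L^1(I;W^{3,2}(\Omega))$ by smooth fields vanishing on $\partial\Omega$, solve by characteristics, and pass to the limit using weak-$*$ compactness and uniqueness.
\item The formula shows that $\Vert\partial_t\zeta\Vert_{W^{3,2}(\omega)}$ already suffices in the exponent, so your bound is slightly stronger than the stated one.
\end{itemize}

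\textbf{Two slips.} In Step 2 you need not hedge. Since $\nabla\overline\rho\in W^{2,2}(\Omega)\hookrightarrow L^\infty(\Omega)$ in three dimensions, the term $\nabla^3\overline{\bu}_{eff}\,\nabla\overline\rho$ is bounded directly by $\Vert\overline{\bu}_{eff}\Vert_{W^{3,2}}\Vert\overline\rho\Vert_{W^{3,2}}$.

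In Step 3, the embedding $W^{1,2}(I;W^{2,2})\hookrightarrow L^\infty(I;W^{3,2})$ is false on its own. What you need is $W^{1,2}(I;W^{2,2}(\Omega))\cap L^2(I;W^{4,2}(\Omega))\hookrightarrow C(\overline I;W^{3,2}(\Omega))$. Its constant involves either $\Vert\overline\bu(0)\Vert_{W^{3,2}(\Omega)}$ or a factor that degenerates as $T\to 0$. The right-hand side of \eqref{eq:ContSubProbEstimate} shows neither, so say where this quantity goes. In the application in \cref{sec:localstrongfixedpoint} it is the datum $\overline\bv_0\in W^{3,2}(\Omega)$, so it is harmless.

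\textbf{The constant.} As you note, your constant in the exponential depends on $\sup_I\Vert\zeta\Vert_{W^{4,2}(\omega)}$. It also depends on the nondegeneracy of $\nabla\bfPsi_\zeta$, through $(\nabla\bfPsi_\zeta)^{-1}$ and $\mathbf{B}_\zeta$ in $W^{3,2}$. So your proof gives $c$ depending on more than $L$, not the literal $c=c(L)$ of the statement. This is exactly what the fixed-point argument uses, since there all constants may depend on $R$.
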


%
%
%
%
%
%


\subsection{The momentum-structure subproblem}\label{sec:MomStrucSubProb}
\noindent Given a density and  temperature
\begin{equation}\label{eq:FixedDensitySpace}
 \overline{\varrho} \in L^\infty\big(I;W^{3,2}(\Omega)\big)\cap W^{1,\infty}\big( I;W^{2,2}(\Omega)\big),  \qquad \overline{\theta}   \in L^2\big(I;W^{4,2}(\Omega)\big)\cap W^{2,2}\big( I;L^{2}(\Omega)\big),
 \end{equation}
we study the well-posedness of the coupled momentum-structure subproblem with unknowns $( \bv, \eta) $. To prove existence and uniqueness of solutions, we rely on the classical \textit{linearisation--fixed-point}  framework developed in  \cite{ngougoue2025local} for barotropic compressible FSI problem.  Compared to  the latter  setting, the only essential difference lies in the pressure law, which now depends on both the density and the temperature. Since these quantities are prescribed throughout this section, the overall structure of the argument remains unchanged.  We therefore,  restrict ourselves  to outlining the main steps of the proof strategy, highlighting the changes induced by the pressure $p\big(\overline{\varrho}, \overline{\theta}\;\!\big)$, and refer to  \cite[Section 3]{ngougoue2025local}  for the complete analysis.


\begin{definition}\label{def:strongSolution}
Let the dataset $\left(\overline{\varrho}, \overline{\theta} ,  \eta_0, \eta_*, \bv_0\right)$ satisfies \eqref{eq:FixedDensitySpace},  \eqref{eq:InitialCondSpace} and \eqref{eq:InitialCondInterface}. 
We call a pair $( \bv,  \eta ) $ a strong solution to the resulting momentum--structure subproblem  \eqref{eq:ShellEq}--\eqref{interfaceCond}  with data $\left(\overline{\varrho}, \overline{\theta} ,  \eta_0, \eta_*, \bv_0\right)$ provided that the following conditions hold:
\begin{itemize}

\item[(a)] The structure displacement $\eta $ satisfies 
\begin{align*}
&\eta \in L^2\big(I; W^{6,2}(\omega)\big)  
\cap W^{3,2}\big(I ; L^{2}(\omega)\big)
  \cap W^{2,\infty}\big(I; W^{1,2}(\omega)\big),
\\
&\partial_t\eta \in L^\infty\big(I; W^{3,2}(\omega)\big) \cap L^{2}\big(I; W^{4,2}(\omega)\big). 
\end{align*}

\item[(b)]  The velocity field $\bv$ satisfies 
\begin{align*}
&\bv\in L^{2}\big(I; W^{4,2}(\Omega_\eta)\big)
\cap W^{2,2}\big(I; L^{2}(\Omega_\eta)\big). 
\end{align*}

\item[(c)]  The momentum--structure subsystem \eqref{eq:ShellEq}--\eqref{interfaceCond} -- with prescribed $\left(\overline{\varrho}, \overline{\theta} \right)$ -- holds a.e. in $I\times \Omega_\eta$. 

\end{itemize}

\end{definition}

For a solution $\left(\varrho, \theta, \bv,  \eta\right)$  of \eqref{eq:ShellEq}--\eqref{interfaceCond2}, we set the new function
$
\overline{\bv}=\bv\circ \bfPsi_\eta$, while $\overline{\varrho}=\varrho\circ \bfPsi_\eta$ and $\overline{\theta}  = \theta \circ \bfPsi_\eta$.
Moreover, we define
\begin{equation*}\label{matrices}
\begin{aligned}
\mathbf{A}_\eta=J_\eta \big(\nabx \bfPsi_\eta^{-1}\circ \bfPsi_\eta\big)\big( \nabx \bfPsi_\eta^{-1}\circ \bfPsi_\eta \big)^\intercal,&\\
\mathbf{B}_\eta=J_\eta \left(\nabx \bfPsi_\eta^{-1}\circ \bfPsi_\eta\right)^\intercal,
&\\
\mathbf{h}_\eta(\overline{\bv}) = \left(J_{\eta_0}\overline{\varrho}_0 - J_{\eta}\overline{\varrho} \right)\partial_t\overline{\bv}-J_\eta\overline{\varrho}\,\nabla\overline{\bv}\cdot\partial_t\bfPsi_\eta^{-1}\circ\bfPsi_\eta - \overline{\varrho}\, \overline{\bv}\big( \nabx\overline{\bv} \colon \mathbf{B}_\eta \big),&
\\
\mathbf{H}_\eta(\overline{\bv})
=\mu\left(\mathbf{A}_{\eta_0}-\mathbf{A}_{\eta}\right)\nabla\overline{\bv} + (\lambda+\mu)\left[ \dfrac{1}{J_{\eta_0}}\left(\mathbf{B}_{\eta_0}\colon \nabla\overline\bv \right)\mathbf{B}_{\eta_0} -  \dfrac{1}{J_\eta}\left(\mathbf{B}_\eta\colon\nabla\overline\bv \right)\mathbf{B}_\eta  \right]  + p\big(\overline{\varrho}, \overline{\theta}\;\!\big)\big(\mathbf{B}_{\eta} - \mathbf{B}_{\eta_0}  \big) ,
\end{aligned}
\end{equation*}
where $J_\eta=\mathrm{det}(\nabla\bfPsi_\eta)$.

\noindent To implement the linearisation-maximal-regularity fixed-point scheme, we rewrite the system \eqref{eq:ShellEq}--$\eqref{eq:ContMomentEq}_2$  on the fixed reference domain. The following lemma recasts the momentum-structure subsystem on the fixed reference domain through \(\bfPsi_\eta\).  

\begin{lemma}\label{lem:EquivProblem}
	Suppose that the dataset
	$\left(\overline{\varrho}, \overline{\theta},  \eta_0, \eta_*, \bv_0\right)$
	satisfies  \eqref{eq:FixedDensitySpace}, \eqref{eq:InitialCondSpace} and \eqref{eq:InitialCondInterface}.
	Then $( \bv,  \eta )$ is a strong solution to the momentum--structure subproblem \eqref{eq:ShellEq}--\eqref{interfaceCond} in the sense of Definition \ref{def:strongSolution}, if and only if $( \overline{\bv}, \eta  )$ is a strong solution of
	\begin{align}
	J_{\eta_0}\overline{\varrho}_0 \,\partial_t\overline{\bv} - \Div\left[\mu \mathbf{A}_{\eta_0}\nabla\overline{\bv} + \dfrac{ \lambda + \mu }{J_{\eta_0}} \left(\mathbf{B}_{\eta_0}\colon \nabla\overline\bv\right)\mathbf{B}_{\eta_0} -  p\big(\overline{\varrho}, \overline{\theta}\;\!\big)\mathbf{B}_{\eta_0}  \right] 
	= \mathbf{h}_\eta(\overline{\bv})-\Div{\mathbf{H}_\eta(\overline{\bv})} ,
	\label{momEqAloneBar}
	\\
	\partial_t^2\eta - \partial_t\Dely \eta + \Dely^2\eta
	=
   \bn^\intercal \left[\mathbf{H}_\eta( \overline{\bv}) - \mu \mathbf{A}_{\eta_0}\nabla\overline{\bv} - \dfrac{\lambda + \mu}{J_{\eta_0}}\left(\mathbf{B}_{\eta_0}\colon \nabla\overline\bv \right)\mathbf{B}_{\eta_0}  +  p\big(\overline{\varrho}, \overline{\theta}\;\!\big) \mathbf{B}_{\eta_0} \right]\circ\bm{\varphi} \bn ,
	\label{shellEqAloneBar}
	\end{align}
	with  $\overline{\bv}  \circ \bm{\varphi}  =(\partial_t\eta)\bn$ on $I\times \omega$.
\end{lemma}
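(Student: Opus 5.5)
The proof is a direct change of variables through the Hanzawa transform $\bfPsi_\eta$. The plan is to pull the momentum equation back to $\Omega$ and then to split off the frozen-coefficient linear operator at $\eta_0$, $\overline\varrho_0$. Since every operation used is invertible under the standing assumption $\sup_I\|\eta\|_{W^{1,\infty}}<L$ (so that $\bfPsi_\eta^{-1}$ exists by Proposition~\ref{prop:estimatePsiEta}), each step runs in both directions. The regularity classes also match: by \eqref{eq:HanzEstim1}--\eqref{eq:HanzEstim3}, and since $\eta\in L^2(W^{6,2})\cap W^{3,2}(L^2)$, composition with $\bfPsi_\eta^{\pm1}$ preserves $L^2(W^{4,2})\cap W^{2,2}(L^2)$. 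So it suffices to verify the identities pointwise a.e.

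\textbf{Step 1 (momentum equation).} Using the continuity equation, I rewrite $\eqref{eq:ContMomentEq}_2$ in nonconservative form,
\[
\varrho(\partial_t\bv+\bv\cdot\nabla\bv)=\Div\big(\mu\nabla\bv+(\lambda+\mu)\Div\bv\,\mathbb I-p\,\mathbb I\big).
\]
I then set $\overline\bv=\bv\circ\bfPsi_\eta$ and apply three identities.
\begin{itemize}
\item The chain rule gives $(\nabla\bv)\circ\bfPsi_\eta=\nabla\overline\bv\,(\nabla\bfPsi_\eta^{-1}\circ\bfPsi_\eta)$.
\item The time derivative transforms as $(\partial_t\bv)\circ\bfPsi_\eta=\partial_t\overline\bv+\nabla\overline\bv\cdot\partial_t\bfPsi_\eta^{-1}\circ\bfPsi_\eta$, with the sign convention that produces the term in $\mathbf h_\eta$.
\item The Piola identity $\Div(J_\eta(\nabla\bfPsi_\eta^{-1}\circ\bfPsi_\eta)^\intercal)=0$ gives, for any tensor $\mathbb T$, $J_\eta(\Div\mathbb T)\circ\bfPsi_\eta=\Div\big((\mathbb T\circ\bfPsi_\eta)\mathbf B_\eta\big)$.
\end{itemize}
Multiplying by $J_\eta$ then produces the following three pieces:
\begin{itemize}
\item the viscous term $\Div(\mu\mathbf A_\eta\nabla\overline\bv)$;
\item the bulk term $\Div\big(\tfrac{\lambda+\mu}{J_\eta}(\mathbf B_\eta:\nabla\overline\bv)\mathbf B_\eta\big)$, using $(\Div\bv)\circ\bfPsi_\eta=J_\eta^{-1}\mathbf B_\eta:\nabla\overline\bv$;
\item the pressure term $-\Div(p(\overline\varrho,\overline\theta)\mathbf B_\eta)$.
\end{itemize}
The convective term is $J_\eta\overline\varrho\,\nabla\overline\bv\,(\nabla\bfPsi_\eta^{-1}\circ\bfPsi_\eta)\overline\bv$. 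Adding and subtracting $J_{\eta_0}\overline\varrho_0\partial_t\overline\bv$ and the frozen operators at $\eta_0$ yields exactly \eqref{momEqAloneBar}, with $\mathbf h_\eta$ and $\mathbf H_\eta$ collecting the differences.

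The one genuinely delicate point is the last term of $\mathbf h_\eta$, written as $\overline\varrho\,\overline\bv(\nabla\overline\bv:\mathbf B_\eta)$. It does not literally coincide with the transformed convective term; instead it arises from the conservative form $\Div(\varrho\bv\otimes\bv)$ after a second use of the continuity equation. I would check carefully which formulation the definition of $\mathbf h_\eta$ encodes, adjusting the split between the conservative and nonconservative forms, since this bookkeeping (together with the $\partial_t\bfPsi_\eta^{-1}$ term) is where sign and factor errors occur. This identical computation is carried out in \cite[Section 3]{ngougoue2025local}; the only change is replacing $p(\overline\varrho)$ by $p(\overline\varrho,\overline\theta)$, which enters linearly in the divergence and needs no new argument.

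\textbf{Step 2 (shell equation and boundary condition).} On $\partial\Omega_\eta$ I use Nanson's formula: $\bn_\eta\circ\bfvarphi_\eta\,\det(\naby\bfvarphi_\eta)$ equals $(\mathbf B_\eta\bn)\circ\bfvarphi$ times the reference surface element, up to the surface-measure factor already built into the parametrisation. Because $\phi\equiv1$ near $s=0$, we have $\bfPsi_\eta\circ\bfvarphi=\bfvarphi_\eta$. Hence $-\bn^\intercal(\bm\tau\circ\bfvarphi_\eta)\bn_\eta\det(\naby\bfvarphi_\eta)$ becomes $-\bn^\intercal[\text{transformed stress}\,\mathbf B_\eta]\circ\bfvarphi\,\bn$. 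Splitting again at $\eta_0$ through $\mathbf H_\eta$ gives \eqref{shellEqAloneBar}. The same identity $\bfPsi_\eta\circ\bfvarphi=\bfvarphi_\eta$ turns \eqref{interfaceCond} into $\overline\bv\circ\bfvarphi=(\partial_t\eta)\bn$. The converse direction is obtained by composing with $\bfPsi_\eta^{-1}$ and reversing each identity.
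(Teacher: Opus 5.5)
The paper does not prove this lemma. It presents it as the standard Hanzawa pull-back taken from \cite{ngougoue2025local}, so your route is the intended one. The bulk computation in Step~1 is fine, because $\mu\nabla\bv+(\lambda+\mu)\Div\bv\,\mathbb I-p\,\mathbb I$ and $\bm\tau=\mathbb S(\nabla\bv)-p\,\mathbb I$ have the same divergence. The step that fails is in Step~2. The shell is driven by the traction of the full Newtonian stress $\mathbb S(\nabla\bv)=\mu(\nabla\bv+\nabla\bv^\intercal)+\lambda\Div\bv\,\mathbb I$. When you say that splitting at $\eta_0$ ``gives \eqref{shellEqAloneBar}'', you silently replace $(\bm\tau\circ\bfPsi_\eta)\mathbf B_\eta$ by the pull-back of the Lam\'e pseudo-stress. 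Equal divergences do not give equal tractions: the two forcings differ by $\mu\,\bn^\intercal(\nabla\bv^\intercal-\Div\bv\,\mathbb I)\bn_\eta$. In this expression the normal derivative of $\bv$ cancels. So, by \eqref{interfaceCond}, it reduces to $\partial_t\eta$ times curvature terms of $\partial\Omega$; at $\eta=0$ it equals $-\mu\,\partial_t\eta\,\Div_{\partial\Omega}\bn$. This vanishes for flat plates but not for shells. For instance, where $\partial\Omega$ is locally the sphere $\{|\bx|=R_0\}$, take an instant with $\eta=0$, $\partial_t\eta=c$ and $\bv=c\bx/R_0$, which is compatible with \eqref{interfaceCond}. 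Then $\bn^\intercal\mathbb S(\nabla\bv)\bn=(2\mu+3\lambda)c/R_0$, while the bracket in \eqref{shellEqAloneBar} yields $(4\mu+3\lambda)c/R_0$. With the paper's $\bm\tau$, the equivalence therefore holds only in one of two ways:
\begin{itemize}
\item you keep the pull-back $\big(\mu(\nabla\bv^\intercal-\Div\bv\,\mathbb I)\circ\bfPsi_\eta\big)\mathbf B_\eta$ in the shell forcing (by your Piola identity it is divergence free, since $\Div(\nabla\bv^\intercal-\Div\bv\,\mathbb I)=0$, so it can be added to the bracket in \eqref{momEqAloneBar} without changing the bulk equation); or
\item $\bm\tau$ is redefined with the pseudo-stress.
\end{itemize}
Separately, Nanson's formula gives $\bn_\eta\circ\bfvarphi_\eta\det(\naby\bfvarphi_\eta)=(\mathbf B_\eta\bn)\circ\bfvarphi\,\det(\naby\bfvarphi)$. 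The factor $\det(\naby\bfvarphi)$ is absent from \eqref{shellEqAloneBar}, so it must be normalised away or carried along; it is not ``built in''.

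The convective term also needs to be settled, not left as something to check. In the subproblem $\overline\varrho$ is prescribed, so there is no continuity equation to use. The non-conservative form $\varrho(\partial_t\bv+\bv\cdot\nabla\bv)$ must simply be taken as the definition of the subproblem's momentum equation; this is what the term $(J_{\eta_0}\overline\varrho_0-J_\eta\overline\varrho)\partial_t\overline\bv$ in $\mathbf h_\eta$ encodes. Your explanation of the last term of $\mathbf h_\eta$ does not work. We have $\partial_t(\varrho\bv)+\Div(\varrho\bv\otimes\bv)=\varrho(\partial_t\bv+\bv\cdot\nabla\bv)+\bv\big(\partial_t\varrho+\Div(\varrho\bv)\big)$. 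Hence the piece $\varrho\bv\Div\bv$ is always discarded together with $\bv(\partial_t\varrho+\bv\cdot\nabla\varrho)$, and it can never stand in for $\varrho(\bv\cdot\nabla)\bv$. Since $\overline\varrho\,\overline\bv(\nabla\overline\bv:\mathbf B_\eta)=J_\eta(\varrho\bv\Div\bv)\circ\bfPsi_\eta$, the printed term is not the pull-back of the convection. Your own chain rule gives $J_\eta\big(\varrho(\bv\cdot\nabla)\bv\big)\circ\bfPsi_\eta=\overline\varrho\,\nabla\overline\bv\,\mathbf B_\eta^\intercal\overline\bv$. The equivalence holds with $-\overline\varrho\,\nabla\overline\bv\,\mathbf B_\eta^\intercal\overline\bv$ as the last term of $\mathbf h_\eta$, and you should state it that way.
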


\noindent We now state the results from \cite[Theorem 3.3]{ngougoue2025local} and \cite[Proposition 3.1]{ngougoue2025local}.

\begin{theorem}\label{thm:transformedSystem1}
Let  the dataset
	$\left(\overline{\varrho}, \overline{\theta},  \eta_0, \eta_*, \bv_0\right)$
	satisfy  \eqref{eq:FixedDensitySpace}, \eqref{eq:InitialCondSpace} and \eqref{eq:InitialCondInterface}. Then there exists  $T_{**}  \in I,$ such that \eqref{momEqAloneBar}--\eqref{shellEqAloneBar}  admits a unique strong solution $(\overline{\bv}, \eta)$ on  $I_{**} := (0, T_{**}].$
\end{theorem}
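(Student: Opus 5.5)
The plan is to reproduce the linearisation--fixed-point scheme of \cite[Section 3]{ngougoue2025local}. The only structural change is that the pressure $p(\overline{\varrho},\overline{\theta})=\overline{\varrho}\,\overline{\theta}$ enters as a \emph{given} forcing term, because $(\overline{\varrho},\overline{\theta})$ are prescribed by \eqref{eq:FixedDensitySpace}.

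\textbf{Step 1: the linear problem.} Fix a reference pair $(\mathbf{w},\zeta)$ in the solution class of Definition \ref{def:strongSolution} on a short interval $I_{**}$, with the same initial data. In \eqref{momEqAloneBar}--\eqref{shellEqAloneBar}, replace $\eta$ by $\zeta$ inside $\mathbf{A}_\eta$, $\mathbf{B}_\eta$, $J_\eta$, $\partial_t\bfPsi_\eta^{-1}$, and replace $\overline{\bv}$ by $\mathbf{w}$ in $\mathbf{h}_\eta$ and $\mathbf{H}_\eta$. What remains on the left is a linear parabolic system with time-independent coefficients determined by $\eta_0$. It is coupled through $\overline{\bv}\circ\bm{\varphi}=(\partial_t\eta)\bn$ to a damped plate equation. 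I take the maximal-regularity estimate for this linear coupled system from \cite[Proposition 3.1]{ngougoue2025local}. That estimate controls $\overline{\bv}$ in $L^2(W^{4,2})\cap W^{2,2}(L^2)$ and $\eta$ in the class of Definition \ref{def:strongSolution}(a). It does so in terms of the right-hand sides in $L^2(W^{2,2})\cap W^{1,2}(L^2)$, the corresponding boundary norms, and the data.

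\textbf{Step 2: bounds on the forcing.} The pressure now contributes two terms: $\Div(p\mathbf{B}_{\eta_0})$, and $p(\mathbf{B}_\zeta-\mathbf{B}_{\eta_0})$ inside $\mathbf{H}$. I estimate $p=\overline{\varrho}\,\overline{\theta}$ in the required spaces, namely $L^2(W^{3,2})\cap W^{1,2}(W^{1,2})$ together with traces. This uses the algebra property of $W^{3,2}$ in 3D, together with $\overline{\varrho}\in L^\infty W^{3,2}\cap W^{1,\infty}W^{2,2}$ and $\overline{\theta}\in L^2W^{4,2}\cap W^{2,2}L^2$. By interpolation, $\overline{\theta}\in L^\infty W^{3,2}\cap W^{1,2}W^{2,2}$. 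Since these quantities are fixed, the pressure terms are independent of $(\mathbf{w},\zeta)$ and only enlarge the constant in the a priori bound. The term $p(\mathbf{B}_\zeta-\mathbf{B}_{\eta_0})$ carries the small factor $\|\zeta-\eta_0\|$, which is $O(T^{1/2})$ by Proposition \ref{prop:estimatePsiEta}.

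\textbf{Step 3: the fixed point.} Let $\mathcal{T}(\mathbf{w},\zeta)$ denote the solution of the linear problem. I take a ball $B_R$ of radius $R$ depending on the data and on the fixed $(\overline{\varrho},\overline{\theta})$, with $R$ chosen larger than the norm of the solution with frozen forcing. The nonlinear terms $\mathbf{h}$, $\mathbf{H}$ are the same as in \cite{ngougoue2025local}, up to the pressure term, which is linear in $\mathbf{B}_\zeta-\mathbf{B}_{\eta_0}$. They satisfy bounds of the form $C(R)T^{\alpha}$, for some $\alpha>0$. This uses three facts: the vanishing of $\mathbf{A}_\zeta-\mathbf{A}_{\eta_0}$ and $J_\zeta\overline{\varrho}-J_{\eta_0}\overline{\varrho}_0$ at $t=0$, \eqref{eq:HanzEstim1}--\eqref{eq:HanzEstim4}, and $\|\zeta\|_{W^{1,\infty}}<L$ for short time. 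Hence $\mathcal{T}(B_R)\subset B_R$ when $T_{**}$ is small.

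For contraction, I estimate the difference of two images in a lower norm, say $L^2W^{2,2}\cap W^{1,2}L^2$ for the velocity and the corresponding energy-level norm for $\eta$. The pressure contributes $p(\mathbf{B}_{\zeta_1}-\mathbf{B}_{\zeta_2})$, which is controlled by \eqref{eq:HanzEstim2}. Contraction in the weak norm combined with boundedness in the strong norm gives a fixed point by the standard weak-closedness argument. Uniqueness follows from the same difference estimate.

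\textbf{Main obstacle.} The main difficulty is the time-regularity of the pressure needed for the $W^{2,2}(L^2)$ level of the velocity. The linear estimate requires $\partial_t(p\,\mathbf{B})$ in $L^2(W^{1,2})$, with appropriate traces on $\omega$ for the shell forcing. This involves $\partial_t\overline{\theta}\in L^2W^{2,2}$ paired with $\overline{\varrho}\in L^\infty W^{2,2}$, and $\partial_t\overline{\varrho}\,\overline{\theta}$. Both are fine, but only barely, at the level of regularity assumed in \eqref{eq:FixedDensitySpace}. The same term also forces the compatibility of the data at $t=0$, and I expect to check it carefully.
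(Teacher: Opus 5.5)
Your proposal is correct and follows the paper's route. The paper gives no independent argument for this theorem; it invokes the linearisation--fixed-point scheme of \cite[Section 3]{ngougoue2025local} and notes that the only change is the prescribed pressure $p(\overline{\varrho},\overline{\theta})$. That pressure enters the linear problem \eqref{linMomAlone}--\eqref{linShelAlone} as given data, through $\Vert p\Vert_{L^2(I;W^{3,2}(\Omega))}$ and $\Vert \partial_t p\Vert_{L^2(I;W^{1,2}(\Omega))}$ in \eqref{linearestimate}, and the nonlinearity only through $p(\mathbf{B}_\eta-\mathbf{B}_{\eta_0})$ in $\mathbf{H}_\eta$, exactly as in your Steps 2--3. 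Two minor remarks: the time regularity of $p$ is not borderline, since $W^{2,2}(\Omega)$ is an algebra and one even has $\partial_t p\in L^2(I;W^{2,2}(\Omega))$; and the compatibility condition $(\mathtt{CC})$ of Proposition~\ref{thm:transformedSystem} cannot be derived from \eqref{eq:InitialCondSpace}--\eqref{eq:InitialCondInterface}, but is necessary for solutions in this class, so it must be imposed on the data rather than verified.
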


Suppose that the data $\left(\overline{\varrho}, \overline{\theta},  \eta_0, \eta_*, \bv_0\right)$ satisfies
\begin{align}
	&\overline{\varrho} \in L^\infty\big(I;W^{3,2}(\Omega)\big)\cap W^{1,\infty}\big( I;W^{2,2}(\Omega)\big), \qquad \overline{\theta}   \in L^2\big(I;W^{4,2}(\Omega)\big)\cap W^{2,2}\big( I;L^{2}(\Omega)\big),
	\; \tag{\ref{eq:FixedDensitySpace}} 
	\\
	& \qquad \qquad \qquad  \eta_0\in W^{5,2}(\omega), \qquad \eta_*\in W^{3,2}(\omega),\qquad \bv_0 \in W^{3,2}(\Omega_{\eta_0}). \label{initialcond}
\end{align}
Let the functions $(\mathbf{h}, \mathbf{H})$ satisfy
\begin{align}
	&\mathbf{h} \in L^2(I;W^{2,2}(\Omega))\cap W^{1,2}(I;L^{2}(\Omega)), \qquad
	\mathbf{h}(0)\in {W^{1,2}(\Omega)}, \nonumber
	\\&
	\mathbf{H} \in L^2(I;W^{3,2}(\Omega))\cap W^{1,2}(I;W^{1,2}(\Omega)), 
	\qquad \mathbf{H}(0) = 0,   \label{sourcecond}
\end{align}
where $\mathbf{h}(0)$ and $\mathbf{H}(0)$  are  the initial values of $\mathbf{h}$ and $\mathbf{H}$ respectively. We linearise the system \eqref{momEqAloneBar}--\eqref{shellEqAloneBar} as follows:
\begin{align}
	J_{\eta_0}\overline{\varrho}_0\,\partial_t\overline{\bv} - \Div\left[\mu \mathbf{A}_{\eta_0}\nabla\overline{\bv} + \dfrac{ \lambda + \mu }{J_{\eta_0}} \left(\mathbf{B}_{\eta_0}\colon \nabla\overline\bv\right)\mathbf{B}_{\eta_0} - p\big(\overline{\varrho}, \overline{\theta}\;\!\big)\mathbf{B}_{\eta_0} \right]  
	= \mathbf{h}-\Div{\mathbf{H}} , \label{linMomAlone}
	\\
	\partial_t^2\eta - \partial_t\Dely \eta + \Dely^2\eta
	=
   \bn^\intercal\left[\mathbf{H} - \mu \mathbf{A}_{\eta_0}\nabla\overline{\bv} - \dfrac{\lambda + \mu}{J_{\eta_0}}\left(\mathbf{B}_{\eta_0}\colon \nabla\overline\bv \right)\mathbf{B}_{\eta_0}   + p\big(\overline{\varrho}, \overline{\theta}\;\!\big)\mathbf{B}_{\eta_0}  \right]\circ\bm{\varphi} \bn ,\label{linShelAlone}
	\end{align}
with  $\overline{\bv}  \circ \bm{\varphi}  =(\partial_t\eta)\bn$ on $I\times \omega$. \\

As a preliminary step for the fixed-point scheme, we require well-posedness for the system \eqref{linMomAlone}–\eqref{linShelAlone}. 
The following proposition provides existence, uniqueness, and the required maximal-regularity estimate under compatible data assumptions specified therein.

\begin{proposition}
	\label{thm:transformedSystem}
Suppose that the dataset $\left(\overline{\varrho},  \overline{\theta}, \mathbf{\bv}_0, \eta_0, \eta_*, \mathbf{h}, \mathbf{H}\right)$ satisfy \eqref{eq:FixedDensitySpace}--\eqref{sourcecond} supplemented by  the compatibility condition
 \begin{equation}\label{eq:CC}
 \begin{aligned}
  \Bigg( \dfrac{1}{\overline{\varrho}_{0}J_{\eta_0}} \Big(  \Div\big( \overline{\bm{\tau} }_0 -  \mathbf{H}(0) \big)   + \mathbf{h}(0)  \Big)  \Bigg)\circ \bm{\varphi} 
 & =  \Bigg( \Dely\eta_{*} - \Dely^{2}\eta_{0}  + \bn^\intercal\Big(  \mathbf{H}(0)  - \overline{\bm{\tau} }_0 \Big)\circ\bm{\varphi}\bn   \Bigg)\bn  \quad \text{on} \;\; \omega,
  \end{aligned}  \tag{$\mathtt{CC}$}
 \end{equation}\\
 where 
 \[  \overline{\bm{\tau} }_0 :=  \mu \mathbf{A}_{\eta_0}\nabla\overline{\bv}_0 + \dfrac{ \lambda + \mu }{J_{\eta_0}} \big(\mathbf{B}_{\eta_0}\colon \nabla\overline{\bv}_0\big)\mathbf{B}_{\eta_0}  - p\big(\overline{\varrho}_0, \overline{\theta}_0\big)\mathbf{B}_{\eta_0} . \]  
	Then there exists a unique  strong solution $( \overline{\bv}, \eta  )$ of \eqref{linMomAlone}--\eqref{linShelAlone} satisfying
\begin{equation}
\begin{aligned}
& \left(\lambda + 2\mu\right)\sup_I\int_\Omega|\partial_t\nabla\overline\bv|^2\dx +\sup_I \int_\omega\left(|\partial_t^2\naby\eta|^2+|\partial_t\naby\Dely\eta|^2\right)\dy + \int_I\int_\Omega |\partial_t^2\overline\bv|^2 \dx\dt
\\
&\qquad   + \left(\lambda + 2\mu\right)\int_I\int_\Omega \left( |\partial_t\nabla^{2}\overline\bv|^{2}  + |\Delta^{2}\overline\bv|^{2} \right)\dx\dt
\\
&\qquad  +\int_I\int_\omega\left(|\partial_t^2\Dely\eta|^2+|\partial_t^3\eta|^2+|\partial_t\Dely^2\eta|^2+|\Dely^3\eta|^2\right)\dy\dt 
\\
& \lesssim 
\Vert \eta_0\Vert_{W^{5,2}(\omega)}^2
+
\Vert \eta_*\Vert_{W^{3,2}(\omega)}^2
+
\Vert \overline{\varrho}_0\Vert_{W^{3,2}(\Omega)}^2
+
\Vert\overline\bv_0\Vert_{W^{3,2}(\Omega)}^2  +
\Vert \mathbf{h}(0)\Vert_{W^{1,2}(\Omega)}^2
+
\Vert \mathbf{H}(0)\Vert_{W^{2,2}(\Omega)}^2
\\
&\qquad +\int_I \left(
\Vert \mathbf{h}\Vert_{W^{2,2}(\Omega)}^2
+
\Vert \mathbf{H}\Vert_{W^{3,2}(\Omega)}^2
+
\Vert\partial_t \mathbf{h}\Vert_{L^{2}(\Omega)}^2
+
\Vert\partial_t\mathbf{H}\Vert_{W^{1,2}(\Omega)}^2  \right)  \dt
\\
& \qquad +  \Vert p\big(\overline{\varrho}, \overline{\theta}\;\!\big)\Vert_{L^{2}\big( I; W^{3,2}(\Omega ) \big) }^2
+
\Vert \partial_t p\big(\overline{\varrho}, \overline{\theta}\;\!\big)\Vert_{L^{2}\big(I; W^{1,2}(\Omega ) \big)}^2   .
 \label{linearestimate}
\end{aligned}
\end{equation}
	\end{proposition}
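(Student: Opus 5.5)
The plan is to reduce the statement to the barotropic case treated in \cite[Proposition 3.1]{ngougoue2025local}. In \eqref{linMomAlone}--\eqref{linShelAlone} the pressure $p(\overline{\varrho},\overline{\theta})=\overline{\varrho}\,\overline{\theta}$ is a \emph{prescribed} function, so it enters only as an additional source. We set $\widetilde{\mathbf H}:=\mathbf H+p(\overline{\varrho},\overline{\theta})\mathbf B_{\eta_0}$. The momentum equation \eqref{linMomAlone} then reads
\[
J_{\eta_0}\overline{\varrho}_0\,\partial_t\overline{\bv}-\Div\Big[\mu\mathbf A_{\eta_0}\nabla\overline{\bv}+\tfrac{\lambda+\mu}{J_{\eta_0}}(\mathbf B_{\eta_0}\colon\nabla\overline{\bv})\mathbf B_{\eta_0}\Big]=\mathbf h-\Div\widetilde{\mathbf H},
\]
and \eqref{linShelAlone} has the same structure with $\widetilde{\mathbf H}$ in place of $\mathbf H+p\mathbf B_{\eta_0}$. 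This is exactly the linear Lamé--shell system with variable coefficients $\mathbf A_{\eta_0},\mathbf B_{\eta_0},J_{\eta_0}\overline{\varrho}_0$ that is analysed in \cite{ngougoue2025local}. The one difference is that $\widetilde{\mathbf H}(0)=p(\overline{\varrho}_0,\overline{\theta}_0)\mathbf B_{\eta_0}\neq0$. For this reason we retain the term $\|\mathbf H(0)\|_{W^{2,2}}$ on the right-hand side and use the compatibility condition \eqref{eq:CC} in the form stated, which already contains the initial pressure through $\overline{\bm\tau}_0$.

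First we check that the modified source lies in the class \eqref{sourcecond}, apart from the value at $t=0$. Since $\eta_0\in W^{5,2}(\omega)$, \cref{prop:estimatePsiEta} gives $\mathbf B_{\eta_0}\in W^{4,2}(\Omega)\hookrightarrow W^{2,\infty}$, and this is time independent. From \eqref{eq:FixedDensitySpace}, $W^{3,2}$ is an algebra and $\overline{\theta}\in L^2(W^{4,2})$, so $p=\overline{\varrho}\,\overline{\theta}\in L^2(W^{3,2})$. Moreover $\partial_tp=\partial_t\overline{\varrho}\,\overline{\theta}+\overline{\varrho}\,\partial_t\overline{\theta}\in L^2(W^{1,2})$, using $\partial_t\overline{\varrho}\in L^\infty(W^{2,2})$ and $\partial_t\overline{\theta}\in L^2(W^{2,2})$. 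The latter follows by interpolating between $L^2(W^{4,2})$ and $W^{2,2}(L^2)$. It follows that $\|\widetilde{\mathbf H}\|_{L^2W^{3,2}\cap W^{1,2}W^{1,2}}$ is bounded by the $\mathbf H$-terms plus the two pressure norms that appear at the end of \eqref{linearestimate}. The initial value satisfies $\|p(\overline{\varrho}_0,\overline{\theta}_0)\mathbf B_{\eta_0}\|_{W^{2,2}}\lesssim\|\overline{\varrho}_0\|_{W^{3,2}}\|\overline{\theta}_0\|_{W^{3,2}}$, where $\overline\theta_0\in W^{3,2}$ by the trace in time of the class above.

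Next we run the existence argument of \cite{ngougoue2025local}. We use a Galerkin scheme, or equivalently a decoupling, for the coupled parabolic Lamé system with dynamic boundary condition $\overline{\bv}\circ\bm\varphi=\partial_t\eta\,\bn$, and we derive the energy estimates in the following order:
\begin{enumerate}
\item[(i)] Test with $\partial_t\overline{\bv}$ and $\partial_t^2\eta$.
\item[(ii)] Differentiate in time and test with $\partial_t^2\overline{\bv}$, $\partial_t^3\eta$. The boundary terms cancel through the coupling, because the shell equation carries exactly the normal traction. This yields the $\sup_I$ bounds on $\partial_t\nabla\overline{\bv}$, $\partial_t^2\naby\eta$, $\partial_t\naby\Dely\eta$ and the $L^2$ bounds on $\partial_t^2\overline{\bv}$, $\partial_t^3\eta$, $\partial_t^2\Dely\eta$.
\item[(iii)] Apply elliptic (Lamé/Stokes-type) regularity on the fixed domain $\Omega$ to $\partial_t\overline{\bv}$, and the higher-order spatial regularity for the plate operator, to gain $\partial_t\nabla^2\overline{\bv}$, $\Delta^2\overline{\bv}$, $\partial_t\Dely^2\eta$ and $\Dely^3\eta$.
\end{enumerate}
The initial quantities $\partial_t\overline{\bv}(0)$ and $\partial_t^2\eta(0)$ required in (ii) are read off from the equations at $t=0$. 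They are bounded by $\|\overline{\bv}_0\|_{W^{3,2}}$, $\|\eta_0\|_{W^{5,2}}$, $\|\eta_*\|_{W^{3,2}}$, $\|\mathbf h(0)\|_{W^{1,2}}$, $\|\widetilde{\mathbf H}(0)\|_{W^{2,2}}$ and $\|\overline{\varrho}_0\|_{W^{3,2}}$, and they are consistent with the boundary coupling precisely because of \eqref{eq:CC}. Uniqueness follows from linearity and the basic energy estimate (i) applied to the difference of two solutions.

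The main obstacle is step (ii) together with the time-differentiated boundary coupling. Because $\widetilde{\mathbf H}(0)\neq0$, the time-differentiated traction produces the boundary term $\partial_t(p\mathbf B_{\eta_0})\bn$, which must be handled via the trace $\|\partial_tp\|_{L^2(W^{1,2})}$ paired against $\partial_t^2\eta$. We plan to place this term in the fluid bulk: integrate by parts and use $\partial_t^2\overline{\bv}\circ\bm\varphi=\partial_t^3\eta\,\bn$, then absorb the result with Young's inequality into $\int|\partial_t^2\overline{\bv}|^2+\int|\partial_t\nabla\overline{\bv}|^2$. A lower bound on $\overline{\varrho}_0J_{\eta_0}$, which holds since $\overline{\varrho}_0>0$ on the compact closure and $J_{\eta_0}>0$ by $\|\eta_0\|_{W^{1,\infty}}<L$, keeps the mass matrix coercive. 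Finally, Gronwall's inequality closes the estimate \eqref{linearestimate}.
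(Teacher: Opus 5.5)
Your core reduction matches the paper's. The paper proves this proposition only by citing the barotropic linear result of \cite{ngougoue2025local}, on the grounds that $p(\overline{\varrho},\overline{\theta})$ is prescribed data, and your checks that $\overline{\varrho}\,\overline{\theta}\in L^2(I;W^{3,2}(\Omega))$ and $\partial_t(\overline{\varrho}\,\overline{\theta})\in L^2(I;W^{1,2}(\Omega))$ are exactly what that citation needs.

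The gap is in the re-derivation, at the step you call the main obstacle: as written it fails. You convert the shell term $\int_\omega \bn^\intercal\partial_t(p\mathbf B_{\eta_0})\circ\bm{\varphi}\,\bn\,\partial_t^3\eta\dy$ into a fluid integral via $\partial_t^2\overline{\bv}\circ\bm{\varphi}=\partial_t^3\eta\,\bn$ and integrate by parts. Besides $\int_\Omega\Div\partial_t(p\mathbf B_{\eta_0})\cdot\partial_t^2\overline{\bv}\dx$, this produces $\int_\Omega\partial_t(p\mathbf B_{\eta_0}):\nabla\partial_t^2\overline{\bv}\dx$. Nothing on the left of \eqref{linearestimate} controls $\nabla\partial_t^2\overline{\bv}$, so Young's inequality cannot absorb this into $\int|\partial_t^2\overline{\bv}|^2+\int|\partial_t\nabla\overline{\bv}|^2$. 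Shifting the time derivative onto the pressure instead would need $\partial_t^2 p$, which is unavailable because $\overline{\varrho}$ is only $W^{1,\infty}$ in time.

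The fix is simply not to integrate by parts. When you test with $(\partial_t^2\overline{\bv},\partial_t^3\eta)$, only the viscous boundary terms cancel. The bulk term $-\int_\Omega\Div\partial_t\widetilde{\mathbf H}\cdot\partial_t^2\overline{\bv}\dx$ is absorbed into $\int J_{\eta_0}\overline{\varrho}_0|\partial_t^2\overline{\bv}|^2$. The shell term is bounded by $\|\partial_t\widetilde{\mathbf H}\|_{L^2(\partial\Omega)}\|\partial_t^3\eta\|_{L^2(\omega)}\lesssim\|\partial_t\widetilde{\mathbf H}\|_{W^{1,2}(\Omega)}\|\partial_t^3\eta\|_{L^2(\omega)}$ and absorbed into the structural dissipation $\int|\partial_t^3\eta|^2$. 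This is why only $\partial_t p\in L^2(I;W^{1,2})$ enters \eqref{linearestimate}. The term is handled exactly like $\partial_t\mathbf H$ and has nothing to do with $\widetilde{\mathbf H}(0)\neq0$.

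The re-derivation was forced on you only by the detour through $\widetilde{\mathbf H}$, which breaks $\mathbf H(0)=0$ in \eqref{sourcecond}. As the paper's remark indicates, the barotropic proposition already carries a prescribed pressure with $p(0)\neq0$ in the same places: inside the divergence on the left, inside $\overline{\bm\tau}_0$ in the compatibility condition, and through $\|p\|_{L^2(I;W^{3,2})}$ and $\|\partial_t p\|_{L^2(I;W^{1,2})}$ on the right. If you leave $p(\overline{\varrho},\overline{\theta})$ where it stands, your regularity check is all that is needed.

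If you do re-derive, two further points need fixing. First, testing with $\partial_t^3\eta$ yields $\sup_I\|\partial_t\Dely\eta\|_{L^2}$, not $\sup_I\|\partial_t\naby\Dely\eta\|_{L^2}$ or $\int_I\|\partial_t^2\Dely\eta\|_{L^2}^2$. Those require testing the time-differentiated shell equation with $-\Dely\partial_t^2\eta$, which needs the trace of $\nabla\partial_t\overline{\bv}$ from your step (iii). Step (iii) in turn needs $\partial_t^2\eta\in L^2(I;W^{3/2,2}(\omega))$ from that plate estimate, so the two must be closed together by interpolation. Second, bound $\|p(0)\|_{W^{2,2}}$ through the time trace $L^2(I;W^{3,2})\cap W^{1,2}(I;W^{1,2})\hookrightarrow C(\overline{I};W^{2,2})$, not by $\|\overline{\theta}_0\|_{W^{3,2}}$, which does not appear on the right of \eqref{linearestimate}.
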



\subsection{The internal energy subproblem}\label{sec:InternalProb}
In the moving domain,  equation $\eqref{eq:ContMomentEq}_3$ reads as
\begin{equation*}
\left\{\begin{aligned}
c_v\rho\big( \partial_t\vartheta+(\bv\cdot\nabla)\vartheta)
&= 
\kappa\Delx \vartheta +\mathbb S(\nabla\bfv):\nabx\bv- p(\rho,\vartheta)\divx \bfv    &  \text{in } I \times \Omega_\eta,  
\\
\nabla\vartheta\cdot \bfn_\eta\circ\bfvarphi_\eta^{-1}&=0  &  \text{on }  I\times \partial\Omega_\eta, 
\\
\vartheta(0,\cdot)&=\vartheta_0(\cdot) & \text{in }\Omega_{\eta(0)}.
\end{aligned}\right.
\end{equation*} 
After a change of variables induced by the Hanzawa transform, we obtain
\begin{equation*}
\left\{\begin{aligned}
c_vJ_\eta\overline\rho\partial_t\overline\vartheta-
\kappa\Div(\bfA_\eta \nabla\overline\vartheta) 
&=-c_v\overline\rho(\overline\bv\cdot\bfB_\eta\nabla)\overline\vartheta -J_\eta\overline{\rho}\,\nabla\overline{\vartheta}\cdot\partial_t\bfPsi_\eta^{-1}\circ\bfPsi_\eta  
\\
&\quad +J_\eta^{-1}\mathbb S(\bfB_\eta\nabla\overline\bfv):\bfB_\eta\nabx\overline\bv- p(\overline\rho,\overline\vartheta)\bfB_\eta :\nabla\overline\bfv    & \text{in }   I\times \Omega,
\\[0.4em]
\bfA_\eta\nabla\overline\vartheta\cdot \bfn\circ\bfvarphi^{-1}&=0 & \text{on } I\times \partial\Omega,\\
\overline\vartheta(0,\cdot)&=\overline\vartheta_0(\cdot) & \text{in }\Omega.
\end{aligned}\right.
\end{equation*} 
This is equivalent to
\begin{equation}\label{eq:31.03}
\left\{\begin{aligned}
\partial_t\overline\vartheta-
\Div\bigg(\frac{\kappa }{c_vJ_{\eta_0}\overline\rho_0} \bfA_{\eta_0} \nabla\overline\vartheta\;\!\bigg) 
&=\mathfrak g_\eta(\overline\varrho,\overline\bv,\overline\vartheta)  & \text{in  } I\times \Omega,
\\[0.4em]
\frac{\kappa }{c_vJ_{\eta_0}\overline\rho_0}\bfA_{\eta_0}\nabla\overline\vartheta\cdot \bfn\circ\bfvarphi^{-1}&=\mathfrak h_\eta(\overline\varrho,\overline\vartheta) & \text{on } I\times \partial\Omega,
\\[0.4em]
\overline\vartheta(0,\cdot)&=\overline\vartheta_0(\cdot) & \text{in }\Omega ,
\end{aligned} \right.
\end{equation} 
where
\begin{align*}
\mathfrak g_\eta(\overline\rho,\overline\bv,\overline\vartheta)&=\frac{1}{c_vJ_{\eta_0}\overline\rho_0}\bigg[c_v(J_{\eta_0}\overline\rho_0-J_\eta \overline\rho)\partial_t\overline\vartheta-c_v\overline\rho(\overline\bv\cdot\bfB_\eta\nabla)\overline\vartheta -J_\eta\overline{\rho}\,\nabla\overline{\vartheta}\cdot\partial_t\bfPsi_\eta^{-1}\circ\bfPsi_\eta \\& \quad \quad  -\kappa\Div\big((\bfA_{\eta_0}-\bfA_\eta) \nabla\overline\vartheta\;\!\big) +J_\eta^{-1}\mathbb S(\bfB_\eta\nabla\overline\bfv):\bfB_\eta\nabx\overline\bv- p(\overline\rho,\overline\vartheta)\bfB_\eta :\nabla\overline\bfv\bigg]\\
&\quad +\kappa c_v\nabla(J_{\eta_0}\overline\rho_0)^{-1}\cdot\bfA_{\eta_0} \nabla\overline\vartheta,
\\[0.4em]
\mathfrak h_\eta(\overline\rho,\overline\vartheta)&=\frac{\kappa }{c_vJ_{\eta_0}\overline\rho_0}(\bfA_{\eta_0}-\bfA_\eta)\nabla\overline\vartheta\cdot \bfn\circ\bfvarphi^{-1}.
\end{align*}
Similar to the analysis carried out in \cref{sec:MomStrucSubProb}, the well-posedness of \eqref{eq:31.03} is obtained through a linearisation and fixed-point procedure.  Suppose that the data $\left(\overline{\rho}, \overline{\bv},  \eta, \overline\vartheta_0\right)$ satisfies
\begin{align}
	&\overline{\rho} \in L^\infty\big(I;W^{3,2}(\Omega)\big)\cap W^{1,\infty}\big( I;W^{2,2}(\Omega)\big), \qquad \overline{\bv}   \in L^2\big(I;W^{4,2}(\Omega)\big)\cap W^{2,2}\big( I;L^{2}(\Omega)\big),
	\label{eq:FixDenVel} 
	\\[0.4em]
&\eta \in L^2\big(I; W^{6,2}(\omega)\big)  
\cap W^{3,2}\big(I ; L^{2}(\omega)\big)
  \cap W^{2,\infty}\big(I; W^{1,2}(\omega)\big),	 \label{eq:FixEta}
  \\[0.4em]
&\partial_t\eta \in L^\infty\big(I; W^{3,2}(\omega)\big) \cap L^{2}\big(I; W^{4,2}(\omega)\big), \quad \overline\vartheta_0 \in W^{3,2}(\Omega).  \label{eq:FixEtaT-Theta0}
\end{align}
Let the functions $(\mathfrak g, \mathfrak h)$ satisfy
\begin{align}
	&\mathfrak g \in L^2(I;W^{2,2}(\Omega))\cap W^{1,2}(I;L^{2}(\Omega)), \nonumber
	\\&
	\mathfrak h \in L^2(I;W^{5/2,2}(\partial\Omega))\cap W^{5/4, 2}(I;L^{2}(\partial\Omega)),  
	\qquad \mathfrak h (0) = 0,   \label{sourcecond-InternalE}
\end{align}
where $\mathfrak h  (0)$ is  the initial value of $\mathfrak h$.  We first linearise  \eqref{eq:31.03}, obtaining 
\begin{equation}\label{eq:31.03A}
\left\{\begin{aligned}
\partial_t\overline\vartheta-
\Div\bigg(\frac{\kappa }{c_vJ_{\eta_0}\overline\rho_0}\bfA_{\eta_0} \nabla\overline\vartheta\bigg) 
&=\mathfrak g     & \text{in } I \times \Omega,
\\[0.4em]
\frac{\kappa }{c_vJ_{\eta_0}\overline\rho_0}\bfA_{\eta_0}\nabla\overline\vartheta\cdot \bfn\circ\bfvarphi^{-1}&=\mathfrak h & \text{ on }  I\times \Omega,
\\[0.4em]
\overline\vartheta(0,\cdot)&=\overline\vartheta_0(\cdot) & \text{in }\Omega ,
\end{aligned}\right.
\end{equation} 
whose solvability and regularity properties are summarised in the following theorem.  
\begin{theorem}\label{prop:InternalEnerLin} 
Let the data $\left(\overline{\rho}, \overline{\bv},  \eta, \overline\vartheta_0\right)$  satisfy \eqref{eq:FixDenVel}--\eqref{eq:FixEtaT-Theta0} and let $(\mathfrak g, \mathfrak h)$ satisfy \eqref{sourcecond-InternalE}. Assume furthermore that the following compatibility conditions hold 
\begin{equation}\label{eq:NewCC} \tag{$\mathtt{C}$}
\left\{\begin{aligned} 
     \frac{\kappa }{c_vJ_{\eta_0}\overline\rho_0}\bfA_{\eta_0}\nabla\overline\vartheta_0\cdot \bfn\circ\bfvarphi^{-1}&=\mathfrak h (0)  ,
          \\[0.4em]
      \frac{\kappa }{c_vJ_{\eta_0}\overline\rho_0}\bfA_{\eta_0} \nabla \Biggl[   \Div\bigg(\frac{\kappa }{c_vJ_{\eta_0}\overline\rho_0}\bfA_{\eta_0} \nabla\overline\vartheta_0\bigg)        +   \mathfrak g (0)  \Biggr] \cdot \bfn\circ\bfvarphi^{-1}  & = \partial_t   \mathfrak h (0)  .
\end{aligned}  \right.   \qquad \qquad   \text{on }\omega,
\end{equation}
Then there exists a unique strong solution $\overline{\vartheta}$  of \eqref{eq:31.03A} satisfying 
\begin{align}\label{eq:31.03B}
\begin{aligned}
\int_I\Vert \partial_t^2\overline\vartheta\Vert_{L^2(\Omega)}^2 \dt+\int_I\|\overline\vartheta\|_{W^{4,2}(\Omega)}^2\dt&\lesssim\|\overline\vartheta_0\|_{W^{3,2}_x}^2 +  \|\mathfrak g\|_{W^{1,2}\left( I; L^2(\Omega)\right)}^2  +\|\mathfrak g\|_{L^{2}\left( I; W^{2,2}(\Omega)\right)}^2
\\[0.4em]
& \quad + \|\mathfrak h\|_{L^{2}\left( I; W^{5/2,2}(\partial\Omega) \right)}^2 + \|\mathfrak h\|_{W^{5/4, 2}\left( I; L^{2}(\partial\Omega) \right)}^2,
\end{aligned}
\end{align}
where the constant
depends on the $L^\infty\left( I;  W^{3,2}(\Omega) \right)-$norm of the coefficients
$\frac{\kappa}{c_vJ_{\eta_0}\overline\rho_0}\bfA_{\eta_0} $.

\end{theorem}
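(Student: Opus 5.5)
The plan is to use $L^2$-maximal regularity for the linear Neumann problem \eqref{eq:31.03A} at the lowest level. We then gain regularity by differentiating once in time and applying elliptic regularity in space.

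Set $\mathbf{a}:=\frac{\kappa}{c_vJ_{\eta_0}\overline\rho_0}\bfA_{\eta_0}$. This coefficient does not depend on time, since it involves only the initial data. By \cref{prop:estimatePsiEta} and $\eta_0\in W^{5,2}(\omega)$, $\overline\rho_0\in W^{3,2}(\Omega)$, it lies in $W^{3,2}(\Omega)\hookrightarrow C^{1,1/2}(\overline\Omega)$. Because $\|\eta_0\|_{W^{1,\infty}}<L$ and $\overline\rho_0$ is bounded below by a positive constant, $\mathbf{a}$ is symmetric and uniformly positive definite.

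\textbf{Step 1 (base level).} First I solve \eqref{eq:31.03A} with $\mathfrak g\in L^2(I;L^2(\Omega))$, $\overline\vartheta_0\in W^{1,2}(\Omega)$ and $\mathfrak h\in L^2(I;W^{1/2,2}(\partial\Omega))\cap W^{1/4,2}(I;L^2(\partial\Omega))$. I would invoke the $L^2$-maximal regularity theory for parabolic Neumann problems with $C^1$ coefficients (Denk--Hieber--Pr\"uss). Alternatively, lift $\mathfrak h$ to the interior, subtract the lift, and run a Galerkin/energy argument; this means testing with $\overline\vartheta$ and with $\partial_t\overline\vartheta$, and using $H^2$-regularity for the Neumann problem. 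The outcome is $\overline\vartheta\in W^{1,2}(I;L^2)\cap L^2(I;W^{2,2})$ with the corresponding bound. Uniqueness then follows from the energy identity for the difference of two solutions with zero data.

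\textbf{Step 2 (time derivative).} Next, $w:=\partial_t\overline\vartheta$ formally solves \eqref{eq:31.03A} with data $(\partial_t\mathfrak g,\partial_t\mathfrak h)$ and initial value $w_0=\Div(\mathbf{a}\nabla\overline\vartheta_0)+\mathfrak g(0)$. Here $w_0\in W^{1,2}(\Omega)$ because $\overline\vartheta_0\in W^{3,2}$, $\mathbf{a}\in W^{3,2}$ and $\mathfrak g(0)\in W^{1,2}$; the last inclusion is the trace of $W^{1,2}(I;L^2)\cap L^2(I;W^{2,2})$. I need $\partial_t\mathfrak h\in L^2(I;W^{1/2,2}(\partial\Omega))\cap W^{1/4,2}(I;L^2(\partial\Omega))$. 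This comes from $\mathfrak h\in L^2(I;W^{5/2,2})\cap W^{5/4,2}(I;L^2)$ via the mixed-derivative theorem, or equivalently by interpolating the anisotropic scale. The compatibility conditions \eqref{eq:NewCC} ensure that both $\overline\vartheta$ and $w$ match their Neumann data at $t=0$. To justify the differentiation rigorously, I would apply Step 1 to $w$ directly and then define $\overline\vartheta(t)=\overline\vartheta_0+\int_0^t w\,\mathrm ds$. From Step 1 this gives $\partial_t^2\overline\vartheta\in L^2(I;L^2)$ and $\partial_t\overline\vartheta\in L^2(I;W^{2,2})$, with bounds by the right-hand side of \eqref{eq:31.03B}.

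\textbf{Step 3 (spatial regularity).} For a.e.\ $t$, $\overline\vartheta(t)$ solves the elliptic Neumann problem
\[
-\Div(\mathbf{a}\nabla\overline\vartheta)=\mathfrak g-\partial_t\overline\vartheta\in W^{2,2}(\Omega),\qquad \mathbf{a}\nabla\overline\vartheta\cdot\bfn\circ\bfvarphi^{-1}=\mathfrak h\in W^{5/2,2}(\partial\Omega).
\]
Elliptic $W^{4,2}$-regularity then yields
\[
\|\overline\vartheta\|_{W^{4,2}}\lesssim\|\mathfrak g-\partial_t\overline\vartheta\|_{W^{2,2}}+\|\mathfrak h\|_{W^{5/2,2}(\partial\Omega)}+\|\overline\vartheta\|_{L^2},
\]
where the $L^2$ term accounts for the kernel of the Neumann problem, which consists of the constants. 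Squaring and integrating in time, together with Steps 1--2, gives \eqref{eq:31.03B}.

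The main obstacle is this elliptic step, because the coefficients are only $W^{3,2}$, which is exactly borderline. I would localise, flatten the boundary, and differentiate the equation twice tangentially, recovering the normal derivatives from the equation. The commutator terms have the schematic form $\nabla^3\mathbf{a}\,\nabla\overline\vartheta$, $\nabla^2\mathbf{a}\,\nabla^2\overline\vartheta$ and $\nabla\mathbf{a}\,\nabla^3\overline\vartheta$. I would control them by H\"older's inequality with $\nabla^3\mathbf{a}\in L^2$ and $\nabla\overline\vartheta\in L^\infty$, then $\nabla^2\mathbf{a}\in L^6$ and $\nabla^2\overline\vartheta\in L^3$, then $\nabla\mathbf{a}\in L^\infty$. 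Interpolating $\|\overline\vartheta\|_{W^{3,2}}$ and $\|\overline\vartheta\|_{W^{2,3}}$ between $W^{4,2}$ and $L^2$ lets these terms be absorbed into the left-hand side. This is why the constant depends only on $\|\mathbf{a}\|_{L^\infty(I;W^{3,2}(\Omega))}$, together with the ellipticity bound.
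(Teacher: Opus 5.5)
Your argument is correct, but it takes a different route from the paper.

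The paper proves the statement in one step. It applies Solonnikov's maximal regularity theory directly in the class $W^{2,2}(I;L^2(\Omega))\cap L^2(I;W^{4,2}(\Omega))$. It verifies only that the principal symbol $\frac{\kappa}{c_vJ_{\eta_0}\overline\rho_0}(\mathbf{A}_{\eta_0}\xi)\cdot\xi$ is uniformly elliptic and that the Lopatinskii--Shapiro condition holds.

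You instead proceed in three stages:
\begin{itemize}
\item you use maximal regularity only at the base level $W^{1,2}(I;L^2)\cap L^2(I;W^{2,2})$;
\item you exploit the time-independence of $\mathbf{a}$ to apply that base result to $w=\partial_t\overline\vartheta$, placing $\partial_t\mathfrak h$ in the base boundary class via the mixed derivative theorem;
\item you recover $L^2(I;W^{4,2})$ from elliptic $W^{4,2}$-regularity at a.e.\ fixed time.
\end{itemize}

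The paper's route is shorter. However, it does not check whether coefficients lying only in $W^{3,2}(\Omega)\hookrightarrow C^{1,1/2}(\overline\Omega)$ meet the hypotheses of the higher-order theorem. It also asserts, rather than derives, that the constant depends only on the $W^{3,2}$-norm of the coefficients.

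Your route needs only mild coefficient regularity for the parabolic part. It isolates the borderline coefficient regularity in a single elliptic commutator estimate, and that estimate is exactly where the stated dependence of the constant comes from.

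Your decomposition also shows which compatibility condition actually matters. When you verify that $\overline\vartheta_0+\int_0^t w\,\mathrm{d}s$ solves \eqref{eq:31.03A}:
\begin{itemize}
\item the interior equation at $t=0$ holds by the choice of $w_0$;
\item the boundary condition holds by the first line of \eqref{eq:NewCC}.
\end{itemize}
The second line is never used. Indeed, $\partial_t\mathfrak h\in W^{1/4,2}(I;L^2(\partial\Omega))$ has no trace at $t=0$. So you should drop your sentence saying that \eqref{eq:NewCC} makes $w$ match its Neumann data at $t=0$, and write out this explicit verification of the reconstruction step instead.
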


\begin{proof}
The result is a direct consequence   of Solonnikov's maximal regularity theory  for linear parabolic boundary value problems (see \cite[Chapter 5, Section 21]{solonnikov1965}).  Indeed,  the principal symbol of the spatial differential operator is given by 
\[
\ \mathfrak{a}_{\eta_0}\big(\bfA_{\eta_0}\bm{\xi} \big) \cdot \bm{\xi} ,   \qquad \mathfrak{a}_{\eta_0}:=\frac{\kappa}{c_vJ_{\eta_0}\overline\rho_0}, \qquad   \forall\, \bm{\xi} \in \R^3, 
\]
which is uniformly elliptic since $ \mathfrak{a}_{\eta_0} > 0$ and $\bfA_{\eta_0}$ is positive definite.  Moreover, the Lopatinskii--Shapiro condition is satisfied.
\end{proof}
We  now proceed to the fixed-point construction for the nonlinear problem \eqref{eq:31.03}. To this end, we  first introduce the functional framework and notation used in the contraction argument.  We set 
\[\mathbb{W}^{s, m}_{q,p} (I\times \Omega) :=  W^{s, q}\big( I; W^{m, p}(\Omega) \big) \quad \forall\, s, m \geq 0, \;\; \forall\,  q, p \in  (0, \infty],   \]
and introduce the following spaces: 
 \begin{equation*}
\begin{aligned}
  \mathcal{X}_{\overline\bv} & :=    \mathbb{W}^{0, 4}_{2,2} (I\times \Omega)  \cap \mathbb{W}^{1, 2}_{2,2} (I\times \Omega)  \cap \mathbb{W}^{2, 0}_{2,2} (I\times \Omega) \cap \mathbb{W}^{1, 1}_{\infty,2} (I\times \Omega) , &
\\[0.4em]
  \mathcal{X}_{\eta} &:=  \mathbb{W}^{1, 4}_{2,2} (I\times \omega)  \cap \mathbb{W}^{1, 3}_{\infty, 2} (I\times \omega) \cap \mathbb{W}^{2, 1}_{\infty,2} (I\times \omega) \cap \mathbb{W}^{2, 2}_{2,2} (I\times \omega) \cap \mathbb{W}^{3, 0}_{2,2} (I\times \omega)  \cap \mathbb{W}^{0, 6}_{2,2} (I\times \omega), &
  \\[0.4em]
   \mathcal{Z} & :=   \left(  \mathbb{W}^{0, 2}_{2,2} (I\times \Omega)  \cap \mathbb{W}^{1, 0}_{2, 2} (I\times \Omega) \right) \times   \left( \mathbb{W}^{0, 5/2}_{2,2} (I\times \partial\Omega)  \cap \mathbb{W}^{5/4, 0 }_{2, 2} (I\times \partial\Omega)  \right)    , &
\end{aligned}
\end{equation*}
endowed with the natural norms.
Furthermore, for arbitrary  $R > 0,$ we define the  closed subset $ \mathcal{Z}_{R} \subset \mathcal{Z} $ by
\begin{align*}
\mathcal{Z}_{R}  :=  \Bigg\{ & (\mathfrak g, \mathfrak h)  \in \mathcal{Z} \,\Big| \,   \mathfrak g (0) = \mathfrak{g}_{\eta_0}(\overline{\rho}_0, \overline{\bv}_0, \overline{\vartheta}_0)  , \;  \mathfrak h (0) =  0,  \; \eqref{eq:NewCC}  \text{ holds},
 \; \text{and} \;\;  \Vert (\mathfrak g, \mathfrak h) \Vert_{\raisebox{-1ex}{$\mathsmaller{ \mathcal{Z} }$}}   \leq R  \Bigg\}.  
\end{align*}
In particular, $\mathcal{Z}_{R} \neq \emptyset.$ Indeed, after possibly increasing the radius $R >0$, and choosing $T >0$ sufficiently small, 
it holds that  $\big( \mathfrak{g}_{\eta_0}(\overline{\rho}_0, \overline{\bv}_0, \overline{\vartheta}_0) ,   t \mathfrak{h}_{0}\big) \in  \mathcal{Z}_{R}, $ where 
\[
\mathfrak{h}_{0} := \frac{\kappa }{c_vJ_{\eta_0}\overline\rho_0}\bfA_{\eta_0} \nabla \Biggl[   \Div\bigg(\frac{\kappa }{c_vJ_{\eta_0}\overline\rho_0}\bfA_{\eta_0} \nabla\overline\vartheta_0\bigg)        +   \mathfrak g (0)  \Biggr] \cdot \bfn\circ\bfvarphi^{-1} .
\]
We shall now show that, for $T > 0$ sufficiently small, the map 
\[ \mathcal{T} \colon \mathcal{Z}_{R} \to  \mathcal{Z}_{ R},  \quad(\mathfrak g, \mathfrak h)  \mapsto \Big( \mathfrak{g}_\eta (\overline\rho, \overline{\bv}, \overline\vartheta), \, \mathfrak{h}_\eta (\overline\rho, \overline{\bv}, \overline\vartheta) \Big),   \] 
 is a strict contraction, where $\overline{\vartheta} $  is the solution of the linearised problem  \eqref{eq:31.03A}.    For clarity of the argument, we proceed in two steps.

 \medskip
 
 \noindent\textbf{Step 1: }  $\mathcal{T}-$invariance of the set $\mathcal{Z}_{R}. $ \\  
Let $c>0$ be arbitrary but fixed, and $R >0$ be sufficiently large  such that 
\begin{equation}\label{eq:Step1-0}
c\left( \Vert \overline{\vartheta}_0 \Vert_{W^{3,2}(\Omega)}  +  \Big(1+\Vert \eta\Vert_{ \raisebox{-1ex}{$\mathsmaller{ \mathcal{X} }$}_{\eta} }^3 \Big) \Vert \overline\bv\Vert_{ \raisebox{-1ex}{$\mathsmaller{ \mathcal{X} }$}_{\overline\bv} }^2  \right) \leq \dfrac{R}{2} .
\end{equation}
It is straightforward that  
\begin{align}
\Vert \mathfrak{h}_\eta \Vert_{ \mathbb{W}^{0, 5/2}_{2,2} (I\times \Omega)} & \lesssim  \Vert \bfA_{\eta_0} - \bfA_{\eta} \Vert_{\mathbb{W}^{0, 5/2}_{\infty,2} (I\times \partial\Omega)}  \Vert \overline\vartheta \Vert_{ \mathbb{W}^{0, 7/2}_{2,2} (I\times \partial\Omega)} \nonumber
\\[0.4em]
& \lesssim T^{1/2} \Vert \eta \Vert_{ \mathbb{W}^{1, 4}_{2,2} (I\times \omega)} \Vert \overline\vartheta \Vert_{ \mathbb{W}^{0, 4}_{2,2} (I\times \Omega)}  \lesssim T^{1/2}  \Vert \overline\vartheta \Vert_{ \mathbb{W}^{0, 4}_{2,2} (I\times \Omega)}.  \label{eq:Step1-1}
\end{align}
Furthermore, by the fractional Leibniz rule and the multiplier property 
\begin{equation}\label{eq:Multipl.Prop}
W^{3/2,2}(\partial\Omega) \cdot L^{ 2}(\partial\Omega)  \hookrightarrow  L^{2}(\partial\Omega)  ,
\end{equation}
we deduce that 
\begin{align}
\Vert  \mathfrak{h}_\eta \Vert_{ \mathbb{W}^{5/4, 0}_{2,2} (I\times \partial\Omega)} & \lesssim  \Vert \bfA_{\eta_0} - \bfA_{\eta} \Vert_{\mathbb{W}^{0, 3/2}_{\infty, 2} (I\times \partial\Omega)}  \Vert \nabla\overline\vartheta \Vert_{\mathbb{W}^{5/4, 0}_{2,2} (I\times \partial\Omega)}    \nonumber
\\[0.4em]
& \quad  +  \Vert \bfA_{\eta_0} - \bfA_{\eta} \Vert_{\mathbb{W}^{5/4, 3/2}_{2, 2} (I\times \partial\Omega)} \Big( \Vert \nabla\overline\vartheta -  \nabla\overline\vartheta_0  \Vert_{\mathbb{W}^{0, 0}_{\infty,2} (I\times \partial\Omega)}  +  \Vert  \nabla\overline\vartheta_0  \Vert_{L^{2} ( \partial\Omega)}    \Big)    \nonumber
\\[0.4em]
& \lesssim  \max\{ T, T^{1/2}\}  \Big( \Vert \partial_t \nabla\overline\vartheta \Vert_{ \mathbb{W}^{0, 0}_{2,2} (I\times \partial\Omega)} +\Vert \nabla\overline\vartheta \Vert_{ \mathbb{W}^{5/4, 0}_{2,2} (I\times \partial\Omega)} \Big) + \Vert \overline{\vartheta}_0 \Vert_{W^{3,2}(\Omega)} .  \label{eq:Step1-2}
\end{align}
Moreover, we have  
\begin{align*}
\Vert \mathfrak{g}_\eta \Vert_{ \mathbb{W}^{0, 2}_{2,2} (I\times \omega)} &   \lesssim  \Big( \Vert \eta - \eta_0 \Vert_{ \mathbb{W}^{0, 3}_{\infty,2} (I\times \omega) }   + \Vert \overline{\rho}_0 - \overline\rho \Vert_{ \mathbb{W}^{0, 2}_{\infty,2} (I\times \Omega)} \Big)  \Vert \partial_t \overline\vartheta \Vert_{ \mathbb{W}^{0, 2}_{2,2} (I\times \Omega)}   + T^{1/2}\Vert \overline{\vartheta} \Vert_{ \mathbb{W}^{0, 3}_{\infty,2} (I\times \Omega)}
\\
&\quad +  \Vert  \eta - \eta_0 \Vert_{ \mathbb{W}^{0, 4}_{\infty,2} (I\times \omega) }  \Vert \overline{\vartheta} \Vert_{ \mathbb{W}^{0, 4}_{2,2} (I\times \Omega)}    +\Big(1+ \Vert \eta \Vert_{ \mathbb{W}^{\infty, 3}_{2,2} (I\times \Omega) }^2\Big) \Vert \overline{\bv} \Vert_{ \mathbb{W}^{0, 3}_{2,2} (I\times \Omega) }^2\\& + T^{1/2}\Vert \partial_t \overline{\vartheta} \Vert_{ \mathbb{W}^{0, 2}_{2, 2} (I\times \Omega)}.
\end{align*}
Hence,
\begin{align}
\Vert \mathfrak{g}_\eta \Vert_{ \mathbb{W}^{0, 2}_{2,2} (I\times \omega)} & \lesssim   T^{1/2} \Vert \overline\vartheta \Vert_{ \raisebox{-1ex}{$\mathsmaller{ \mathcal{X} }$}_{\overline\bv} }   +  \Big(1+\Vert \eta\Vert_{ \raisebox{-1ex}{$\mathsmaller{ \mathcal{X} }$}_{\eta} }^3\Big)\Vert \overline\bv\Vert_{ \raisebox{-1ex}{$\mathsmaller{ \mathcal{X} }$}_{\overline\bv} }^2  + \Vert \overline{\vartheta}_0 \Vert_{W^{3,2}(\Omega)}.    \label{eq:Step1-3}
\\[0.25cm]
\intertext{Similar, it holds that }
\Vert \partial_t \mathfrak{g}_\eta \Vert_{ \mathbb{W}^{0, 0}_{2,2} (I\times \omega)} & \lesssim  T^{1/2} \Vert \overline\vartheta \Vert_{ \raisebox{-1ex}{$\mathsmaller{ \mathcal{X} }$}_{\overline\bv} }   + \Vert \overline\bv\Vert_{ \raisebox{-1ex}{$\mathsmaller{  \mathcal{X} }$}_{\overline\bv} }^2  + \Vert \overline{\vartheta}_0 \Vert_{W^{3,2}(\Omega)}   \label{eq:Step1-4}. 
\end{align}  
Therefore, combining \eqref{eq:Step1-0}--\eqref{eq:Step1-4}, and by \eqref{eq:31.03B},  we conclude that  
\[
 \Vert \mathcal{T} (\mathfrak g, \mathfrak h) \Vert_{\raisebox{-1ex}{$\mathsmaller{\mathcal{Z} }$}} = \Vert (\mathfrak g_{\eta}, \mathfrak h_\eta) \Vert_{\raisebox{-1ex}{$\mathsmaller{ \mathcal{Z} }$}}  \leq R 
\]
for  sufficiently small $T >0$.\\

\medskip

\noindent\textbf{Step 2: }  $\mathcal{T} $ is a strict contraction. \\
Let  $\overline\vartheta_i,  \; i \in \{1, 2\}, $ denote the corresponding   solutions to the linearised system  \eqref{eq:31.03A},   with source terms $ \left( \mathfrak{g}_{i},  \mathfrak{h}_{i} \right)  \in \mathcal{Z}_{ R}, $ and data $\left(\overline{\rho}_i, \overline{\bv}_i,  \eta_i, \overline\vartheta_0\right).$

In order to obtain a contraction we need to estimate
$\mathfrak g_{\eta_1}(\overline\rho_1,\overline\bv_1,\overline\vartheta_1)-\mathfrak g_{\eta_2}(\overline\rho_2,\overline\bv_2,\overline\vartheta_2)$. The term 
$\frac{1}{c_vJ_{\eta_0}\overline\rho_0}$ can be ignored since it belongs to $W^{2,2}(\Omega)$, is time-independent and bounded from above and below. So, we must estimate
\begin{align*}
\int_I&\|J_{\eta_1}^{-1}\mathbb S(\bfB_{\eta_1}\nabla\overline\bfv_1):\bfB_{\eta_1}\nabx\overline\bv_1-J_{\eta_2}^{-1}\mathbb S(\bfB_{\eta_2}\nabla\overline\bfv_2):\bfB_{\eta_2}\nabx\overline\bv_2\|_{W^{2,2}(\Omega)}^2\dt\\[0.4em]
&\lesssim \int_I\|(J_{\eta_1}^{-1}-J_{\eta_2}^{-1})\mathbb S(\bfB_{\eta_1}\nabla\overline\bfv_1):\bfB_{\eta_1}\nabx\overline\bv_1\|_{W^{2,2}(\Omega)}^2\dt\\[0.4em]
& \quad + \int_I\|J_{\eta_2}^{-1}(\mathbb S(\bfB_{\eta_1}-\bfB_{\eta_2})\nabla\overline\bfv_1):\bfB_{\eta_2}\nabx\overline\bv_1\|_{W^{2,2}(\Omega)}^2\dt\\[0.4em]
& \quad + \int_I\|J_{\eta_2}^{-1}\mathbb S(\bfB_{\eta_2}\nabla(\overline\bfv_1-\overline\bfv_2)):\bfB_{\eta_1}\nabx\overline\bv_1\|_{W^{2,2}(\Omega)}^2\dt\\[0.4em]
& \quad + \int_I\|J_{\eta_2}^{-1}\mathbb S(\bfB_{\eta_2}\nabla\overline\bfv_2):(\bfB_{\eta_1}-\bfB_{\eta_2})\nabx\overline\bv_1\|_{W^{2,2}(\Omega)}^2\dt\\[0.4em]
& \quad + \int_I\|J_{\eta_2}^{-1}\mathbb S(\bfB_{\eta_2}\nabla\overline\bfv_2):\bfB_{\eta_2}\nabx(\overline\bv_1-\overline\bfv_2)\|_{W^{2,2}(\Omega)}^2\dt.
\end{align*}
Note that $\bfB_{\eta}$ and $J_\eta$ are multipliers on $W^{2,2}(\Omega)$  as  $\eta\in \mathbb{W}^{0, 3}_{\infty,2}(I\times\omega)$. Similarly $\nabla\bfv\in  \mathbb{W}^{0, 2}_{\infty,2}(I\times\Omega)$ is a multiplier. Hence we have
\begin{align*}
\int_I&\|J_{\eta_1}^{-1}\mathbb S(\bfB_{\eta_1}\nabla\overline\bfv_1):\bfB_{\eta_1}\nabx\overline\bv_1-J_{\eta_2}^{-1}\mathbb S(\bfB_{\eta_2}\nabla\overline\bfv_2):\bfB_{\eta_2}\nabx\overline\bv_2\|_{W^{2,2}(\Omega)}^2\dt\\[0.4em]
&\lesssim \int_I\|J_{\eta_1}^{-1}-J_{\eta_2}^{-1}\|_{W^{2,2}(\Omega)}^2\dt+ \int_I\|\bfB_{\eta_1}-\bfB_{\eta_2}\|_{W^{2,2}(\Omega)}^2\dt+ \int_I\|\nabx(\overline\bv_1-\overline\bfv_2)\|_{W^{2,2}(\Omega)}^2\dt\\[0.4em]
&\lesssim  \int_I\|\eta_1-\eta_2\|_{W^{3,2}(\omega)}^2\dt+ \int_I\|\overline\bv_1-\overline\bfv_2\|_{W^{3,2}(\Omega)}^2\dt\\
&\lesssim T \bigg( \sup_I\|\eta_1-\eta_2\|_{W^{3,2}(\omega)}^2+ \sup_I\|\overline\bv_1-\overline\bfv_2\|_{W^{3,2}(\Omega)}^2\bigg).
\end{align*}
Similarly,
\begin{align*} 
\int_I&\|p(\overline\rho_1,\overline\vartheta_1)\bfB_{\eta_1} :\nabla\overline\bfv_1-p(\overline\rho_2,\overline\vartheta_2)\bfB_{\eta_2} :\nabla\overline\bfv_2\|_{W^{2,2}(\Omega)}^2\dt\\
&\lesssim T \sup_I \Big( \|\eta_1-\eta_2\|_{W^{3,2}(\omega)}^2+  \|\overline\bv_1-\overline\bfv_2\|_{W^{3,2}(\Omega)}^2+ \|\overline\rho_1-\overline\rho_2\|_{W^{2,2}(\Omega)}^2+ \|\overline\vartheta_1-\overline\vartheta_2\|_{W^{2,2}(\Omega)}^2\Big),
\end{align*}
\begin{align*}
\int_I&\|\overline\rho(\overline\bv\cdot\bfB_\eta\nabla)\overline\vartheta-\overline\rho(\overline\bv\cdot\bfB_\eta\nabla)\overline\vartheta\|_{W^{2,2}(\Omega)}^2\dt
\\
&\lesssim T \sup_I \Big( \|\eta_1-\eta_2\|_{W^{3,2}(\omega)}^2+ \|\overline\rho_1-\overline\rho_2\|_{W^{2,2}(\Omega)}^2+ \|\overline\vartheta_1-\overline\vartheta_2\|_{W^{3,2}(\Omega)}^2\Big),
\end{align*}
and
\begin{align*}
\int_I&\|
\nabla(J_{\eta_0}\overline\rho_0)^{-1}\cdot\bfA_{\eta_0} \nabla\overline\vartheta_1-\nabla(J_{\eta_0}\overline\rho_0)^{-1}\cdot\bfA_{\eta_0} \nabla\overline\vartheta_2\|_{W^{2,2}(\Omega)}\dt\\
&\lesssim T \sup_I\|\overline\vartheta_1-\overline\vartheta_2\|_{W^{3,2}(\Omega)}^2
\end{align*}
since $\eta_0\in W^{4,2}(\omega)$ and $\overline\rho_0\in W^{3,2}(\Omega)$. Furthermore, we have 
\begin{align*}
\int_I&\|
J_{\eta_1}\overline{\rho}_1\,\nabla\overline{\vartheta}_1\cdot\partial_t\bfPsi_{\eta_1}^{-1}\circ\bfPsi_{\eta_1} -J_{\eta_2}\overline{\rho}_2\,\nabla\overline{\vartheta}_2\cdot\partial_t\bfPsi_{\eta_2}^{-1}\circ\bfPsi_{\eta_2}\|_{W^{2,2}(\Omega)}^2\dt
\\
&\lesssim T \sup_I \Big( \|\eta_1-\eta_2\|_{W^{2,2}(\omega)}^2 + \|\partial_t(\eta_1-\eta_2)\|_{W^{2,2}(\omega)}^2 + \|\overline\rho_1-\overline\rho_2\|_{W^{2,2}(\Omega)}^2 +  \|\overline\vartheta_1-\overline\vartheta_2\|_{W^{3,2}(\Omega)}^2\Big),
\end{align*}
\begin{align*}
\int_I&\|(J_{\eta_0}\overline\rho_0-J_{\eta_1} \overline\rho_1)\partial_t\overline\vartheta_1-(J_{\eta_0}\overline\rho_0-J_{\eta_2} \overline\rho_2)\partial_t\overline\vartheta_2\|_{W^{2,2}(\Omega)}^2\dt\\
&\lesssim\int_I\|(J_{\eta_0}\overline\rho_0-J_{\eta_1} \overline\rho_1)\partial_t(\overline\vartheta_1-\overline\vartheta_2)\|_{W^{2,2}(\Omega)}^2\dt+\int_I\|(J_{\eta_1}\overline\rho_1-J_{\eta_2} \overline\rho_2)\partial_t\overline\vartheta_2\|_{W^{2,2}(\Omega)}^2\dt\\
&\lesssim\sup_I \!\|J_{\eta_0}\overline\rho_0-J_{\eta_1}\overline\rho_1\|_{W^{2,2}(\Omega)}^2\! \!\int_I \! \|
\partial_t\overline\vartheta_1-\partial_t\overline\vartheta_2\|_{W^{2,2}(\Omega)}^2\dt +\sup_I\!\|J_{\eta_1}\overline\rho_1-J_{\eta_2}\overline\rho_2\|_{W^{2,2}(\Omega)}^2\! \!\int_I \!\|
 \partial_t\overline\vartheta_2\|_{W^{2,2}(\Omega)}^2\dt\\
&\lesssim T\|(\overline\vartheta_1,\eta_1)-(\overline\vartheta_2,\eta_2)\|_{ \raisebox{-1ex}{$\mathsmaller{ \mathcal{X} } $}_{\overline\bv}  \raisebox{-1ex}{$\mathsmaller{ \times \mathcal{X}  }$}_{\eta} }
\end{align*}
using that $\eta, \overline\rho\in \mathbb{W}^{1,2}_{2,2}(I\times \Omega))$. Finally, we obtain
\begin{align*}
\int_I&\|
\kappa\Div((\bfA_{\eta_0}-\bfA_{\eta_1}) \nabla\overline\vartheta_1)-\kappa\Div((\bfA_{\eta_0}-\bfA_{\eta_2}) \nabla\overline\vartheta_2)\|_{W^{2,2}(\Omega)}^2\dt\\
&\lesssim\int_I\|
(\bfA_{\eta_0}-\bfA_{\eta_1}) \nabla\overline\vartheta_1-(\bfA_{\eta_0}-\bfA_{\eta_2}) \nabla\overline\vartheta_2\|_{W^{3,2}(\Omega)}^2\dt\\
&\lesssim\int_I\|
(\bfA_{\eta_0}-\bfA_{\eta_1}) (\nabla\overline\vartheta_1- \nabla\overline\vartheta_2)\|_{W^{3,2}(\Omega)}^2\dt+\int_I\|
(\bfA_{\eta_1}-\bfA_{\eta_2}) \nabla\overline\vartheta_2\|_{W^{3,2}(\Omega)}^2\dt\\
&\lesssim\sup_I\|\eta_0-\eta_1\|_{W^{4,2}(\omega)}^2\int_I\|
\overline\vartheta_1-\overline\vartheta_2\|_{W^{4,2}(\Omega)}^2\dt+\sup_I\|\eta_1-\eta_2\|_{W^{4,2}(\omega)}^2\int_I\|
\overline\vartheta_2\|_{W^{4,2}(\Omega)}^2\dt\\
&\lesssim T \|(\overline\vartheta_1,\eta_1)-(\overline\vartheta_2,\eta_2)\|_{ \raisebox{-1ex}{$\mathsmaller{ \mathcal{X} } $}_{\overline\bv}  \raisebox{-1ex}{$\mathsmaller{ \times \mathcal{X}  }$}_{\eta} }
\end{align*}
using that $\eta\in \mathbb{W}^{1,4}_{2,2}(I\times \omega))$.

To complete the  contraction argument, we proceed by  estimating
$\partial_t\big(\mathfrak g_{\eta_1}(\overline\varrho,\overline\bv,\overline\vartheta)-\mathfrak g_{\eta_2}(\overline\varrho_2,\overline\bv_2,\overline\vartheta_2)\big)$. We first consider
\begin{align*}
\int_I&\|\partial_t(J_{\eta_1}^{-1}\mathbb S(\bfB_{\eta_1}\nabla\overline\bfv_1):\bfB_{\eta_1}\nabx\overline\bv_1-J_{\eta_2}^{-1}\mathbb S(\bfB_{\eta_2}\nabla\overline\bfv_2):\bfB_{\eta_2}\nabx\overline\bv_2)\|_{L^{2}(\Omega)}^2\dt\\
&\lesssim \int_I\|\partial_t(J_{\eta_1}^{-1}-J_{\eta_2}^{-1})\mathbb S(\bfB_{\eta_1}\nabla\overline\bfv_1):\bfB_{\eta_1}\nabx\overline\bv_1\|_{L^{2}(\Omega)}^2\dt\\
&\quad + \int_I\|(J_{\eta_1}^{-1}-J_{\eta_2}^{-1})\partial_t(\mathbb S(\bfB_{\eta_1}\nabla\overline\bfv_1):\bfB_{\eta_1}\nabx\overline\bv_1)\|_{L^{2}(\Omega)}^2\dt\\
&\quad + \int_I\|(\mathbb S(\partial_t(\bfB_{\eta_1}-\bfB_{\eta_2}))\nabla\overline\bfv_1):J_{\eta_2}^{-1}\bfB_{\eta_2}\nabx\overline\bv_1\|_{L^{2}(\Omega)}^2\dt\\
&\quad + \int_I\|(\mathbb S((\bfB_{\eta_1}-\bfB_{\eta_2}))\nabla\partial_t\overline\bfv_1):J_{\eta_2}^{-1}\bfB_{\eta_2}\nabx\overline\bv_1\|_{L^{2}(\Omega)}^2\dt\\
&\quad + \int_I\|(\mathbb S((\bfB_{\eta_1}-\bfB_{\eta_2}))\nabla\overline\bfv_1):\partial_t(J_{\eta_2}^{-1}\bfB_{\eta_2}\nabx\overline\bv_1)\|_{L^{2}(\Omega)}^2\dt\\
&\quad + \int_I\|\mathbb S(\bfB_{\eta_2}\nabla\partial_t(\overline\bfv_1-\overline\bfv_2)):J_{\eta_2}^{-1}\bfB_{\eta_1}\nabx\overline\bv_1\|_{L^{2}(\Omega)}^2\dt\\
&\quad + \int_I\|\mathbb S(\partial_t\bfB_{\eta_2}\nabla(\overline\bfv_1-\overline\bfv_2)):J_{\eta_2}^{-1}\bfB_{\eta_1}\nabx\overline\bv_1\|_{L^{2}(\Omega)}^2\dt\\
&\quad + \int_I\|\mathbb S(\bfB_{\eta_2}\nabla(\overline\bfv_1-\overline\bfv_2)):\partial_t(J_{\eta_2}^{-1}\bfB_{\eta_1}\nabx\overline\bv_1)\|_{L^{2}(\Omega)}^2\dt\\
&\quad + \int_I\|J_{\eta_2}^{-1}\mathbb S(\bfB_{\eta_2}\nabla\overline\bfv_2):\partial_t(\bfB_{\eta_1}-\bfB_{\eta_2})\nabx\overline\bv_1\|_{L^{2}(\Omega)}^2\dt\\
& \quad + \int_I\|\partial_t(J_{\eta_2}^{-1}\mathbb S(\bfB_{\eta_2}\nabla\overline\bfv_2)):(\bfB_{\eta_1}-\bfB_{\eta_2})\nabx\overline\bv_1\|_{L^{2}(\Omega)}^2\dt\\
& \quad + \int_I\|J_{\eta_2}^{-1}\mathbb S(\bfB_{\eta_2}\nabla\overline\bfv_2):(\bfB_{\eta_1}-\bfB_{\eta_2})\nabx\partial_t\overline\bv_1\|_{L^{2}(\Omega)}^2\dt\\
&\quad + \int_I\|J_{\eta_2}^{-1}\mathbb S(\bfB_{\eta_2}\nabla\overline\bfv_2):\bfB_{\eta_2}\nabx\partial_t(\overline\bv_1-\overline\bfv_2)\|_{L^{2}(\Omega)}^2\dt\\
& \quad + \int_I\|\partial_t(J_{\eta_2}^{-1}\mathbb S(\bfB_{\eta_2}\nabla\overline\bfv_2)):\bfB_{\eta_2}\nabx(\overline\bv_1-\overline\bfv_2)\|_{L^{2}(\Omega)}^2\dt\\
&\quad + \int_I\|J_{\eta_2}^{-1}\mathbb S(\bfB_{\eta_2}\nabla\overline\bfv_2):\partial_t\bfB_{\eta_2}\nabx(\overline\bv_1-\overline\bfv_2)\|_{L^{2}(\Omega)}^2\dt .
\end{align*}
Since $\partial_t\eta\in \mathbb{W}^{0,3}_{\infty, 2}(I\times \omega)$ and $\bfv\in \mathbb{W}^{1,1}_{\infty, 2}(I\times \Omega)\cap \mathbb{W}^{0,3}_{\infty, 2}(I\times \Omega)$ we have
\begin{align*}
\int_I&\|\partial_t(J_{\eta_1}^{-1}\mathbb S(\bfB_{\eta_1}\nabla\overline\bfv_1):\bfB_{\eta_1}\nabx\overline\bv_1-J_{\eta_2}^{-1}\mathbb S(\bfB_{\eta_2}\nabla\overline\bfv_2):\bfB_{\eta_2}\nabx\overline\bv_2)\|_{L^{2}(\Omega)}^2\dt\\
&\lesssim T \sup_I \Big( \|\partial_t(\eta_1-\eta_2)\|_{W^{1,2}(\omega)}^2+ \|\eta_1-\eta_2\|_{W^{3,2}(\omega)}^2  +  \|\partial_t(\overline\bv_1-\overline\bfv_2)\|_{W^{1,2}(\Omega)}^2+ \|\overline\bv_1-\overline\bfv_2\|_{W^{3,2}(\Omega)}^2\Big)
\end{align*}
Similarly,

\begin{align*}
\int_I&\|\partial_t(p(\overline\rho_1,\overline\vartheta_1)\bfB_{\eta_1} :\nabla\overline\bfv_1-p(\overline\rho_2,\overline\vartheta_2)\bfB_{\eta_2} :\nabla\overline\bfv_2)\|_{L^{2}(\Omega)}^2\dt\\
&\lesssim T \sup_I \Big( \|\partial_t(\eta_1-\eta_2)\|_{W^{1,2}(\omega)}^2+ \|\partial_t(\overline\bv_1-\overline\bfv_2)\|_{W^{1,2}(\Omega)}^2+ \|\partial_t(\overline\rho_1-\overline\rho_2)\|_{L^{2}(\Omega)}^2+ \|\partial_t(\overline\vartheta_1-\overline\vartheta_2)\|_{L^{2}(\Omega)}^2\Big)\\
&+ T \sup_I \Big( \|\eta_1-\eta_2\|_{W^{3,2}(\omega)}^2+ \|\overline\bv_1-\overline\bfv_2\|_{W^{3,2}(\Omega)}^2+ \|\overline\rho_1-\overline\rho_2\|_{W^{2,2}(\Omega)}^2+ \|\overline\vartheta_1-\overline\vartheta_2\|_{W^{2,2}(\Omega)}^2\Big),
\end{align*}

\begin{align*}
\int_I&\|\partial_t(\overline\rho(\overline\bv\cdot\bfB_\eta\nabla)\overline\vartheta-\overline\rho(\overline\bv\cdot\bfB_\eta\nabla)\overline\vartheta)\|_{L^{2}(\Omega)}^2\dt
\\
&\lesssim T \sup_I \Big( \|\partial_t(\eta_1-\eta_2)\|_{W^{1,2}(\omega)}^2+ \|\partial_t(\overline\rho_1-\overline\rho_2)\|_{L^{2}(\Omega)}^2+ \|\partial_t\overline\vartheta_1-\overline\vartheta_2\|_{W^{1,2}(\Omega)}^2\Big)\\
&+ T \sup_I \Big( \|\eta_1-\eta_2\|_{W^{3,2}(\omega)}^2+ \|\overline\bv_1-\overline\bfv_2\|_{W^{2,2}(\Omega)}^2+ \|\overline\rho_1-\overline\rho_2\|_{W^{2,2}(\Omega)}^2+ \|\overline\vartheta_1-\overline\vartheta_2\|_{W^{3,2}(\Omega)}^2\Big),
\end{align*}

\begin{align*}
\int_I&\|
\nabla(J_{\eta_0}\overline\rho_0)^{-1}\cdot\bfA_{\eta_0} \partial_t\nabla\overline\vartheta_1-\nabla(J_{\eta_0}\overline\rho_0)^{-1}\cdot\bfA_{\eta_0} \partial_t\nabla\overline\vartheta_2\|_{W^{2,2}(\Omega)}\dt\\
&\lesssim T\sup_I\|\partial_t(\overline\vartheta_1-\overline\vartheta_2)\|_{W^{1,2}(\Omega)}^2,
\end{align*}

\begin{align*}
\int_I&\|
\partial_t\left(J_{\eta_1}\overline{\rho}_1\,\nabla\overline{\vartheta}_1\cdot\partial_t\bfPsi_{\eta_1}^{-1}\circ\bfPsi_{\eta_1} -J_{\eta_2}\overline{\rho}_2\,\nabla\overline{\vartheta}_2\cdot\partial_t\bfPsi_{\eta_2}^{-1}\circ\bfPsi_{\eta_2} \right)\|_{L^{2}(\Omega)}^2\dt
\\
&\lesssim T \sup_I \Big( \|\partial_t(\eta_1-\eta_2)\|_{L^{2}(\omega)}^2+ \|\partial_t^2(\eta_1-\eta_2)\|_{L^{2}(\omega)}^2+ \|\partial_t(\overline\rho_1-\overline\rho_2)\|_{L^{2}(\Omega)}^2 +  \|\partial_t(\overline\vartheta_1-\overline\vartheta_2)\|_{W^{1,2}(\Omega)}^2\Big),
\end{align*}
\begin{align*}
\int_I&\left\|\partial_t\left((J_{\eta_0}\overline\rho_0-J_{\eta_1} \overline\rho_1)\partial_t\overline\vartheta_1-(J_{\eta_0}\overline\rho_0-J_{\eta_2} \overline\rho_2)\partial_t\overline\vartheta_2 \right) \right\|_{L^{2}(\Omega)}^2\dt\\
&\lesssim\int_I\|(J_{\eta_0}\overline\rho_0-J_{\eta_1} \overline\rho_1)\partial_t^2(\overline\vartheta_1-\overline\vartheta_2)\|_{L^{2}(\Omega)}^2\dt+\int_I\|(J_{\eta_1}\overline\rho_1-J_{\eta_2} \overline\rho_2)\partial_t^2\overline\vartheta_2\|_{L^{2}(\Omega)}^2\dt\\
&\quad +\int_I\|\partial_t(J_{\eta_0}\overline\rho_0-J_{\eta_1} \overline\rho_1)\partial_t(\overline\vartheta_1-\overline\vartheta_2)\|_{L^{2}(\Omega)}^2\dt+\int_I\|\partial_t(J_{\eta_1}\overline\rho_1-J_{\eta_2} \overline\rho_2)\partial_t\overline\vartheta_2\|_{L^{2}(\Omega)}^2\dt\\
&\lesssim\sup_I\|J_{\eta_0}\overline\rho_0-J_{\eta_1}\overline\rho_1\|_{W^{2,2}(\Omega)}^2\int_I\|
\partial_t^2\overline\vartheta_1-\partial_t^2\vartheta_2\|_{L^{2}(\Omega)}^2\dt+\sup_I\|J_{\eta_1}\overline\rho_1-J_{\eta_2}\overline\rho_2\|_{W^{2,2}(\Omega)}^2\int_I\|
 \partial_t^2\overline\vartheta_2\|_{L^{2}(\Omega)}^2\dt\\
&\quad +\sup_I\|\partial_tJ_{\eta_1}\overline\rho_1\|_{W^{2,2}(\Omega)}^2\int_I\|
\partial_t\overline\vartheta_1-\partial_t\vartheta_2\|_{L^{2}(\Omega)}^2\dt+\sup_I\|\partial_t(J_{\eta_1}\overline\rho_1-J_{\eta_2}\overline\rho_2)\|_{W^{2,2}(\Omega)}^2\int_I\|
 \partial_t\overline\vartheta_2\|_{L^{2}(\Omega)}^2\dt\\
&\lesssim T \|(\overline\vartheta_1,\eta_1)-(\overline\vartheta_2,\eta_2)\|_{ \raisebox{-1ex}{$\mathsmaller{ \mathcal{X} } $}_{\overline\bv}  \raisebox{-1ex}{$\mathsmaller{ \times \mathcal{X}  }$}_{\eta} },
\end{align*}

\begin{align*}
\int_I&\|
\partial_t\Big(\kappa\Div\left((\bfA_{\eta_0}-\bfA_{\eta_1}) \nabla\overline\vartheta_1\right)-\kappa\Div((\bfA_{\eta_0}-\bfA_{\eta_2}) \nabla\overline\vartheta_2) \Big)\|_{L^{2}(\Omega)}^2\dt\\
&\lesssim\int_I\|
\partial_t\big((\bfA_{\eta_0}-\bfA_{\eta_1}) \nabla\overline\vartheta_1-(\bfA_{\eta_0}-\bfA_{\eta_2}) \nabla\overline\vartheta_2\big)\|_{W^{1,2}(\Omega)}^2\dt\\
&\lesssim\int_I\|
(\bfA_{\eta_0}-\bfA_{\eta_1}) \partial_t(\nabla\overline\vartheta_1- \nabla\overline\vartheta_2)\|_{W^{1,2}(\Omega)}^2\dt+\int_I\|
(\bfA_{\eta_1}-\bfA_{\eta_2}) \partial_t\nabla\overline\vartheta_2\|_{W^{1,2}(\Omega)}^2\dt\\
&\quad +\int_I\|
\partial_t(\bfA_{\eta_0}-\bfA_{\eta_1}) (\nabla\overline\vartheta_1- \nabla\overline\vartheta_2)\|_{W^{1,2}(\Omega)}^2\dt+\int_I\|
\partial_t(\bfA_{\eta_1}-\bfA_{\eta_2}) \nabla\overline\vartheta_2\|_{W^{1,2}(\Omega)}^2\dt\\
&\lesssim\sup_I\|\eta_0-\eta_1\|_{W^{2,2}(\omega)}^2\int_I\|
\partial_t(\overline\vartheta_1-\overline\vartheta_2)\|_{W^{2,2}(\Omega)}^2\dt+\sup_I\|\eta_1-\eta_2\|_{W^{2,2}(\omega)}^2\int_I\|
\partial_t\overline\vartheta_2\|_{W^{2,2}(\Omega)}^2\dt\\
&\quad +\sup_I\|\partial_t\eta_1\|_{W^{2,2}(\omega)}^2\int_I\|
\overline\vartheta_1-\overline\vartheta_2\|_{W^{2,2}(\Omega)}^2\dt+\sup_I\|\partial_t(\eta_1-\eta_2)\|_{W^{2,2}(\omega)}^2\int_I\|
\overline\vartheta_2\|_{W^{2,2}(\Omega)}^2\dt\\
&\lesssim T \|(\overline\vartheta_1,\eta_1)-(\overline\vartheta_2,\eta_2)\|_{ \raisebox{-1ex}{$\mathsmaller{ \mathcal{X} } $}_{\overline\bv}  \raisebox{-1ex}{$\mathsmaller{ \times \mathcal{X}  }$}_{\eta} }
\end{align*}
The estimates for $\mathfrak h$ are  derived analogously  to  those for
$$-\kappa\Div\big((\bfA_{\eta_0}-\bfA_\eta) \nabla\overline\vartheta\big).$$
In fact, taking the trace is only half a derivative, whereas $\Div(\cdot)$ is a full one.  Consequently, the boundary estimates are less restrictive and follow by the same argument  together with the fractional Leibniz rule.

 
 \subsection{Local Strong Solutions}\label{sec:localstrongfixedpoint}
 We now establish the existence of a local-in-time strong solution to the full system \eqref{eq:ShellEq}--\eqref{eq:ContMomentEq}. Our approach relies on combining the subproblems into a single fixed-point framework. For notational consistency, we continue to work on the time interval $ I = (0, T) $  -- with $ T> 0 $ implicitly restricted to ensure that all subsequent estimates remain valid -- and introduce the space 
\begin{equation*}
\begin{aligned}
  \mathcal{X}_{\overline{\rho}} &:=   \mathbb{W}^{0, 3}_{\infty,2} \left(I\times \Omega\right)\cap \mathbb{W}^{1, 2}_{\infty,2} \left(I\times \Omega\right), &
\end{aligned}
\end{equation*}
endowed with the norm
\begin{equation*}
\begin{aligned}
 \Vert \overline{\rho}\Vert_{ \raisebox{-1.5ex}{$\mathcal{X}$}_{\overline{\rho}} } &:=  \sup_{t\in I}\Big( \Vert \overline{\rho}(t)\Vert_{W^{3,2}(\Omega)}
+
\Vert \partial_t\overline{\rho}(t)\Vert_{W^{2,2}(\Omega )} 
\Big). &
\end{aligned}
\end{equation*}
Furthermore, we consider the subspace of initial data
\[\mathcal{I} := \bigg\{ \left( \overline{\rho}_0, \overline{\bv}_0, \overline\theta_0, \eta_0, \eta_{*}  \right) \in \big(W^{3,2}(\Omega)\big)^3\times W^{5,2}(\omega)\times  W^{3,2}(\omega) \, \Big| \, \overline{\bv}_{0}\circ\bm{\varphi} = \eta_{*}\bn \; \text{ and } \; \nabla\overline\theta_0 \cdot \bn\circ \bm{\varphi}   = 0 \quad \text{on } \omega  \bigg\}, \]
endowed with the norm
\[
\Vert \left(\overline{\rho}_0, \overline{\bv}_0, \overline\theta_0,  \eta_0, \eta_{*}  \right) \Vert_{\raisebox{-1ex}{$\mathsmaller{ \mathcal{I} }$} } :=  \Vert  \overline{\rho}_0  \Vert_{W^{3,2}(\Omega)} + \Vert  \overline{\bv}_0  \Vert_{W^{3,2}(\Omega)}  + \Vert  \overline{\theta}_0  \Vert_{W^{3,2}(\Omega)} +  \Vert  \eta_0  \Vert_{W^{5,2}(\omega)} + \Vert  \eta_*  \Vert_{W^{3,2}(\omega)}.  
\]
 \noindent For $ \overline{\rho} \in  \mathcal{X}_{\overline\rho},  $ and $ \overline{\theta} \in  \mathcal{X}_{\overline\bv},  $ let  $(\overline\bv, \eta) $ be the unique strong solution of the momentum--structure subproblem corresponding to the system \eqref{momEqAloneBar}--\eqref{shellEqAloneBar}, with data $ \left( \overline{\rho}, \overline{\theta}, \overline{\bv}_0, \eta_0, \eta_*\right)$ obtain from \cref{thm:transformedSystem1}. Given such a pair $(\overline\bv, \eta), $ we define $\overline{\rho}^{\#}$ to be the unique strong solution of \eqref{rhoEquAloneTransform}--\eqref{initialCondSolvSubProTransform} associated with  the data $\left(\overline{\rho}_0, \overline\bv, \eta  \right)$. Likewise, we define  $\overline{\theta}^{\#}$ as the unique strong solution of  \eqref{eq:31.03}  corresponding to the  data $( \overline{\rho}, \overline\bv, \eta, \overline\theta_0 )$. \\
This induces the mapping $\mathbf{F} = \mathbf{F}_1 \circ \mathbf{F}_2 $ where 
\[
\mathbf{F}(\overline{\rho}, \overline\theta) = \big(\overline{\rho}^{\#}, \overline{\theta}^{\#} \big), \qquad 
\mathbf{F}_2(\overline{\rho}, \overline\theta) = (\overline\bv, \eta) , \quad \text{and} \;\;\;  \mathbf{F}_1(\overline\bv, \eta) = \big(\overline{\rho}^{\#}, \overline{\theta}^{\#} \big). 
\]
 We shall consider $\mathbf{F} $ on the closed ball 
 \[\mathcal{B}_R := \bigg\{ (\overline{\rho}, \overline\theta) \in \mathcal{X}_{\overline\rho} \times \mathcal{X}_{\overline\bv} \colon  \Vert (\overline{\rho}, \overline\theta)\Vert_{ \raisebox{-1ex}{$\mathsmaller{ \mathcal{X}  }$}_{\overline\rho} \raisebox{-1ex}{$\mathsmaller{ \times \mathcal{X} }$}_{\overline\bv}   } \leq R   \bigg\}.  \]
 The remainder of this section is devoted to proving that, for  sufficiently small time $T > 0 $, the map  
 \[
 \mathbf{F} \colon  \mathcal{X}_{\overline\rho} \times \mathcal{X}_{\overline\bv} \to  \mathcal{X}_{\overline\rho} \times \mathcal{X}_{\overline\bv} 
 \]
 maps $ \mathcal{B}_R $ into itself and is  a strict contraction. Consequently, $\mathbf{F} $ admits a unique fixed point,  which in turn yields existence of  a unique  strong solution to the fully coupled system \eqref{eq:ShellEq}--\eqref{interfaceCond2}.\\[-0.2cm]

\medskip

\noindent\textbf{Step 1: }  $\mathbf{F}\big( \mathcal{B}_R  \big) \subset \mathcal{B}_R  $. \\
Let $(\overline{\rho}, \overline\theta) \in \mathcal{B}_R $,  then by the a priori estimate \eqref{eq:ContSubProbEstimate},  
\begin{equation}
 \begin{aligned} \label{eq:lrhoHatEstimate}
\sup_{t\in I} & \Big( \Vert \overline{\rho}^{\#}(t)\Vert_{W^{3,2}(\Omega )}^2 
+
\Vert \partial_t\overline{\rho}^{\#}(t)\Vert_{W^{2,2}(\Omega )}^2 
\Big)
\\
& \lesssim
 \Vert  \overline{\rho}_0\Vert_{W^{3,2}(\Omega)}^2  
\Bigg(1 + \sup\limits_I \Vert \partial_t \eta \Vert_{W^{3,2}(\omega)}^2  + \int_I\Vert  \overline\bv
\Vert_{W^{4,2}(\Omega )}^2 \dt    
\\
&\qquad  \qquad\qquad\qquad  + 
\int_I\Vert  \partial_{t}^2 \overline\bv
\Vert_{L^{2}(\Omega)}^2 \dt
\Bigg)
  \exp{\bigg( c\int_I \big(\Vert \partial_t \eta \Vert_{W^{4,2}(\omega)} + 
\Vert  \overline\bv\Vert_{W^{4,2}(\Omega)}\big)  \dt \bigg)}.
\end{aligned}
\end{equation} 
\noindent Moreover, using \eqref{linearestimate}, it follows that 
\begin{align}
 \nonumber 
\sup_{t\in I} &\Big( \Vert \overline{\rho}^{\#}(t)\Vert_{W^{3,2}(\Omega )}^2 
+
\Vert \partial_t \overline{\rho}^{\#}(t)\Vert_{W^{2,2}(\Omega)}^2 
\Big)
\\\nonumber&\lesssim\Vert  \overline{\rho}_0\Vert_{W^{3,2}(\Omega)}^2 e^{CT} 
\bigg(1+CT+\Vert \left(\overline{\rho}_0, \overline{\bv}_0, \overline\theta_0,  \eta_0, \eta_{*}  \right) \Vert_{\raisebox{-1ex}{$\mathsmaller{ \mathcal{I} }$} }^{2}  +  T\sup_{ I}\left( \Vert \overline{\rho}\,\overline\theta \Vert_{W^{3,2}(\Omega )}^2 + \Vert \partial_t (\overline{\rho}\,\overline\theta )\Vert_{W^{2,2}(\Omega )}^2 \right) 
\bigg)
\\
& \lesssim\Vert  \overline{\rho}_0\Vert_{W^{3,2}(\Omega)}^2 e^{CT} 
\bigg(1+CT+\Vert \left(\overline{\rho}_0, \overline{\bv}_0, \overline\theta_0,  \eta_0, \eta_{*}  \right) \Vert_{\raisebox{-1ex}{$\mathsmaller{ \mathcal{I} }$} }^{2}  +  T\Vert \overline{\rho}\Vert_{ \raisebox{-1ex}{$ \mathsmaller{ \mathcal{X} }$}_{\overline\rho } }^2 \Vert \overline\theta \Vert_{\raisebox{-1ex}{$\mathsmaller{ \mathcal{X} }$}_{\overline\bv} }^2
\bigg),\label{eq:rrhoHatEstimate}
\end{align}  
where the constant $C > 0 $ depends on $R > 0$, on  the tubular neighbourhood radius $L > 0$, and linearly on $\mu $ and $\lambda$.  However, the constant hidden in ``$\lesssim$'' is independent of $R$.
On the other hand, we deduce from \eqref{eq:31.03B}, \eqref{eq:Step1-1}, \eqref{eq:Step1-2} and \eqref{eq:Step1-3}--\eqref{eq:Step1-4}   that
\begin{equation} \label{eq:ThetaHatEstimate}
\begin{aligned} 
\Vert \overline{\theta}^{\#} \Vert_{ \raisebox{-1ex}{$\mathsmaller{ \mathcal{X} }$}_{\overline\bv} }^2  & \lesssim  \Vert \left(\overline{\rho}_0, \overline{\bv}_0, \overline\theta_0,  \eta_0, \eta_{*}  \right) \Vert_{\raisebox{-1ex}{$\mathsmaller{ \mathcal{I} }$} }^{2}    +  T \Vert \left(\overline{\bv}, \eta   \right)\Vert_{ \raisebox{-1ex}{$\mathsmaller{ \mathcal{X} } $}_{\overline\bv}  \raisebox{-1ex}{$\mathsmaller{ \times \mathcal{X}  }$}_{\eta} }^2
\\[0.4em]
& \lesssim  \Vert \left(\overline{\rho}_0, \overline{\bv}_0, \overline\theta_0,  \eta_0, \eta_{*}  \right) \Vert_{\raisebox{-1ex}{$\mathsmaller{ \mathcal{I} }$} }^{2}    
+  C Te^{CT}  \left( \Vert \left(\overline{\rho}_0, \overline{\bv}_0, \overline\theta_0,  \eta_0, \eta_{*}  \right) \Vert_{\raisebox{-1ex}{$\mathsmaller{ \mathcal{I} }$} }^{2}  +  T\Vert \overline{\rho}\Vert_{ \raisebox{-1ex}{$\mathsmaller{ \mathcal{X} }$}_{\overline\rho } }^2 \Vert \overline\theta \Vert_{\raisebox{-1ex}{$\mathsmaller{ \mathcal{X} }$}_{\overline\bv} }^2  \right)
\end{aligned}
\end{equation}
Hence,  up to increasing the radius  $R > 0 $,  we deduce from \eqref{eq:rrhoHatEstimate}--\eqref{eq:ThetaHatEstimate} that, for  $T> 0 $ small enough,  
\[
\Vert ( \overline{\rho}^{\#} , \overline{\theta}^{\#} ) \Vert_{ \raisebox{-1ex}{$\mathsmaller{ \mathcal{X} }$}_{\overline\rho}  \raisebox{-1ex}{$\mathsmaller{\times \mathcal{X} }$}_{\overline\bv} }  \leq R .
\]

\medskip

\noindent\textbf{Step 2: }  $\mathbf{F} $ is a strict contraction. \\
For each $\mathtt{j} \in \{ 1, 2\}$, let $(\overline{\bv}_\mathtt{j}, \eta_\mathtt{j}) = \mathbf{F}_2(\overline{\rho}_\mathtt{j}, \overline\theta_\mathtt{j}) $ be the unique strong solution of the subproblem  \eqref{momEqAloneBar}--\eqref{shellEqAloneBar} obtained from  \cref{thm:transformedSystem1}, and let 
$(\overline{\rho}^{\#}_\mathtt{j},  \overline{\theta}^{\#}_\mathtt{j}) =  \mathbf{F}_1(\overline{\bv}_\mathtt{j}, \eta_\mathtt{j}) $ denote the corresponding  solutions of \eqref{rhoEquAloneTransform}--\eqref{initialCondSolvSubProTransform} and   \eqref{eq:31.03}  respectively (their existence follows from \cref{thm:mainFP} and \cref{prop:InternalEnerLin}). 
 In view of the contraction argument, we aim to bound the difference $(\overline{\rho}^{\#}, \overline{\theta}^{\#})_{1,2} :=  (\overline{\rho}^{\#}_1, \overline{\theta}^{\#}_1) - (\overline{\rho}^{\#}_2, \overline{\theta}^{\#}_2) $   in the norm $ \mathcal{X}_{\overline\rho} \times \mathcal{X}_{\overline\bv} $ in terms of $ \overline{\bv}_{1,2} := \overline{\bv}_1 - \overline{\bv}_2 $ and $\eta_{1,2} := \eta_1 - \eta_2 $ in suitable norms. \\ 
To simplify the notation in the sequel,  we define the spaces 
  \[
\begin{aligned}
\mathcal{Y}_{\overline{\rho}}
& :=
\mathbb{W}^{0, 2}_{\infty,2}(I\times \Omega)
\cap
\mathbb{W}^{1, 1}_{\infty,2}(I\times \Omega),
&
\mathcal{Y}_{\overline\bv}
&:=
\mathbb{W}^{0, 3}_{2,2}(I\times \Omega)
\cap
\mathbb{W}^{1, 2}_{2,2}(I\times \Omega),
\\[0.4em]
\mathcal{Y}_{\eta}
& :=
\mathbb{W}^{1, 3}_{2,2}(I\times \omega)
\cap
\mathbb{W}^{2, 2}_{2,2}(I\times \omega),
&
\mathcal{Y}_{\overline\theta}
&:=
\mathcal{Y}_{\overline\bv}
\cap \mathcal{Y}_{\overline\rho},
\end{aligned}
\]
endowed respectively with their canonical norms.
Since the velocity and the displacement are regarded as prescribed,  the analysis of the continuity equation \eqref{rhoEquAloneTransform}--\eqref{initialCondSolvSubProTransform} is independent of the temperature variable. Hence, the arguments and  estimates established  in \cite[Section 4]{ngougoue2025local}   apply without modification.  More precisely, it holds that 
\begin{equation}\label{eq:LinTransportTypEstimFinal}
  \Vert  \overline{\rho}^{\#}_{1,2}  \Vert_{\raisebox{-1ex}{$\mathsmaller{ \mathcal{Y} }$}_{\overline{\rho}}  } \leq cT^{1/2} e^{cT}\Vert \left( \overline{\bv}_{1,2}, \eta_{1,2}   \right)\Vert_{  \raisebox{-1ex}{$\mathsmaller{ \mathcal{Y} }$}_{\overline\bv}  \raisebox{-1ex}{$ \mathsmaller{ \times \mathcal{Y} }$}_{\eta}}, 
 \end{equation} 
 where  $c = c\Big( \Omega, \eta_1, \eta_2,   \overline{\bv}_1,  \overline{\bv}_2, \overline{\rho}^{\#}_1, \overline{\rho}^{\#}_2  \Big) > 0 .$
 
Moreover, in order to control the pair $\left( \overline{\bv}_{1,2}, \eta_{1,2}   \right) $  in terms of $\overline{\rho}_{1,2} := \overline{\rho}_1 - \overline{\rho}_2 $ in the  $ \mathcal{Y}_{\overline{\rho}}\, -$norm, we note 
that  $\left( \overline{\bv}_{1,2}, \eta_{1,2}   \right) $  satisfies the momentum--structure system 
\begin{align}
 \mathbf{\mathcal{L}}_{\overline{\bv}} (\overline{\bv}_{1,2}) &= \mathbf{h}_{1,2} - \Div{\mathbf{H}_{1,2}}  - \Div\Big( \left( \overline\rho_1\,\overline\theta_1 - \overline\rho_2\,\overline\theta_2 \right) \mathbf{B}_{\eta_0} \Big), 
 \label{momEqAloneBarDiff}
 \\
 \mathbf{\mathcal{L}}_{\eta} (\eta_{1,2}) &=  \bn^\intercal \left[\mathbf{H}_{1,2} - \mu \mathbf{A}_{\eta_0}\nabla\overline{\bv}_{1,2} - \dfrac{\lambda + \mu}{J_{\eta_0}}\left(\mathbf{B}_{\eta_0}\colon \nabla\overline{\bv}_{1,2} \right)\mathbf{B}_{\eta_0}  + \left( \overline\rho_1\,\overline\theta_1 - \overline\rho_2\,\overline\theta_2 \right) \mathbf{B}_{\eta_0}  \right]\circ\bm{\varphi} \bn ,
 \label{shellEqAloneBarDiff}
\end{align} 
 with  $\overline{\bv}_{1,2}  \circ \bm{\varphi}  = (\partial_t\eta_{1,2})\bn$ on $I\times \omega$, and source terms  
 \[ \mathbf{h}_{1,2} := \mathbf{h}_{\eta_1}(\overline{\bv}_1)\!\left[\,\overline{\rho}_1\right] - \mathbf{h}_{\eta_2}(\overline{\bv}_2)\!\left[\,\overline{\rho}_2\right],  \qquad  \mathbf{H}_{1,2} := \mathbf{H}_{\eta_1}(\overline{\bv}_1)\!\left[\,\overline{\rho}_1\right] - \mathbf{H}_{\eta_2}(\overline{\bv}_2)\!\left[\,\overline{\rho}_2 \right].  \]
 Of note, this notation highlights the dependence of $\mathbf{h}_{\eta}(\overline{\bv}) $ and $\mathbf{H}_{\eta}(\overline{\bv})$ on the prescribed density $\overline{\rho}$, which we now indicate explicitly via square brackets. Importantly, 
 the linear differential operators $ \mathbf{\mathcal{L}}_{\overline{\bv}} $  and  $ \mathbf{\mathcal{L}}_{\eta} $ are defined respectively on the fixed reference domains $I\times\Omega $ and $I\times \omega$ by
 	\begin{align*}
	\mathbf{\mathcal{L}}_{\overline{\bv}}(\bm{\phi}) &:= J_{\eta_0}\overline{\rho}_0 \,\partial_t\bm{\phi} - \Div\left[\mu \mathbf{A}_{\eta_0}\nabla\bm{\phi} + \dfrac{ \lambda + \mu }{J_{\eta_0}} \left(\mathbf{B}_{\eta_0}\colon \nabla\bm{\phi} \right)\mathbf{B}_{\eta_0} \right]  ,
	\\
	\mathbf{\mathcal{L}}_{\eta}(\zeta) &:=  \partial_t^2\zeta - \partial_t\Dely \zeta + \Dely^2\zeta.
	\end{align*}
Owing to the continuous dependence of  solutions to \eqref{momEqAloneBar}--\eqref{shellEqAloneBar} on the input data (cf.~\eqref{linearestimate}), the following  energy estimate holds for the pair $\left( \overline{\bv}_{1,2}, \eta_{1,2}   \right) $ (see \cite[Section 4]{ngougoue2025local} for further details) : 
\begin{align}\label{eq:EstimCombVeloDispl1}
\Vert \left( \overline{\bv}_{1,2}, \eta_{1,2}   \right)\Vert_{  \raisebox{-1ex}{$\mathsmaller{ \mathcal{Y} }$}_{\overline\bv}  \raisebox{-1ex}{$\mathsmaller{ \times \mathcal{Y} }$}_{\eta}} \leq  CT^{1/2}e^{CT} \left(  \Vert  \overline{\rho}_{1}  -  \overline{\rho}_{2} \Vert_{ \raisebox{-1ex}{$\mathsmaller{ \mathcal{Y} }$}_{\overline\rho} }  
+     \Vert \overline\rho_1\,\overline\theta_1 - \overline\rho_2\,\overline\theta_2\Vert_{ \raisebox{-1ex}{$\mathsmaller{ \mathcal{Y} }$}_{\overline\rho} }    \right),
\end{align}  
with a constant $C = C(\mu, \lambda, R,  \eta_1, \eta_2,   \overline{\bv}_1,  \overline{\bv}_2) > 0$ controlled in terms of $R$. \\
Whence, 
\begin{align}\label{eq:EstimCombVeloDispl}
\Vert \left( \overline{\bv}_{1,2}, \eta_{1,2}   \right)\Vert_{  \raisebox{-1ex}{$\mathsmaller{ \mathcal{Y} }$}_{\overline\bv}  \raisebox{-1ex}{$\mathsmaller{ \times \mathcal{Y} }$}_{\eta}} \leq  C T^{1/2} e^{CT} \left(  \Vert  \overline{\rho}_{1,2}  \Vert_{ \raisebox{-1ex}{$\mathsmaller{ \mathcal{Y} }$}_{\overline\rho} }  
+     \Vert \overline\theta_{1,2} \Vert_{ \raisebox{-1ex}{$\mathsmaller{ \mathcal{Y} }$}_{\overline\theta} }    \right),
\end{align}  
Substituting \eqref{eq:EstimCombVeloDispl} in \eqref{eq:LinTransportTypEstimFinal} yields 
 \begin{equation}\label{eq:FixPointFinalRho}
  \Vert  \overline{\rho}^{\#}_{1,2}  \Vert_{\raisebox{-1ex}{$ \mathsmaller{ \mathcal{Y} }$}_{\overline{\rho}} }   \leq c  T e^{cT}  \Vert \left( \overline{\rho}_{1,2},  \overline{\theta}_{1,2} \right)\Vert_{ \raisebox{-1ex}{$ \mathsmaller{ \mathcal{Y} }$}_{\overline{\rho} }    \raisebox{-1ex}{$\mathsmaller{ \times \mathcal{Y} }$}_{\overline\theta}    }  , 
 \end{equation} 
 with $c = c(\gamma, \mu, \lambda, R,  \eta_1, \eta_2,   \overline{\bv}_1,  \overline{\bv}_2)$ controlled in terms of $R$.  \\ 
 
To close the contraction argument, it remains to control $ \Vert  \overline{\theta}^{\#}_{1,2}  \Vert_{\raisebox{-1ex}{$ \mathsmaller{ \mathcal{Y} }$}_{\overline{\theta}} } $. This is achieved by a two-step application of Solonnikov's maximal regularity theory, combined with   Sobolev embedding arguments. We first recall that $\overline{\theta}^{\#}_{1,2}$ is governed by the following evolution equation:
\begin{equation}\label{eq:31.03.Diff}
\left\{\begin{aligned}
\partial_t\overline{\theta}^{\#}_{1,2}    -
\Div\bigg(\frac{\kappa }{c_vJ_{\eta_0}\overline\rho_0} \bfA_{\eta_0} \nabla\overline{\theta}^{\#}_{1,2}\;\!\bigg) 
&= \mathfrak g_{\eta_1} - \mathfrak g_{\eta_2}   & \text{in  } I\times \Omega,
\\[0.4em]
\frac{\kappa }{c_vJ_{\eta_0}\overline\rho_0}\bfA_{\eta_0}\nabla\overline{\theta}^{\#}_{1,2}\cdot \bfn\circ\bfvarphi^{-1}&=\mathfrak h_{\eta_1} - h_{\eta_2}      & \text{on } I\times \partial\Omega,
\\[0.4em]
\overline{\theta}^{\#}_{1,2}(0,\cdot)&= 0 & \text{in }\Omega .
\end{aligned} \right.
\end{equation} 
By \cite[Chapter 5, Section 21]{solonnikov1965}, we deduce  that 
\begin{align}\label{eq:31.03B.Diff}
\begin{aligned}
 \Vert \overline{\theta}^{\#}_{1,2} \Vert_{\mathbb{W}^{3/2, 0}_{2, 2}(I\times\Omega)}^2  +  \Vert \overline{\theta}^{\#}_{1,2} \Vert_{\mathbb{W}^{0, 3}_{2, 2}(I\times\Omega)}^2  &\lesssim   \|\mathfrak g_{\eta_1} - \mathfrak g_{\eta_2}  \|_{ \mathbb{W}^{0, 1}_{2, 2}(I\times\Omega)}^2  +\|\mathfrak g_{\eta_1} - \mathfrak g_{\eta_2} \|_{\mathbb{W}^{1/2, 0}_{2, 2}(I\times\Omega) }^2
\\[0.4em]
& \quad + \|\mathfrak h_{\eta_1} - \mathfrak h_{\eta_2}  \|_{ \mathbb{W}^{0, 3/2}_{2, 2}(I\times\partial\Omega)}^2  +\|\mathfrak h_{\eta_1} - \mathfrak h_{\eta_2} \|_{\mathbb{W}^{3/4, 0}_{2, 2}(I\times\partial\Omega) }^2 .
\end{aligned}
\end{align}
It is immediate from the definition of $\mathfrak h_{\eta}$ (omitting the domain from the norm notation) that 
\begin{align*}
\|\mathfrak h_{\eta_1} - \mathfrak h_{\eta_2}  \|_{ \mathbb{W}^{0, 3/2}_{2, 2}} \leq  \Vert \eta_2 - \eta_1 \Vert_{\mathbb{W}^{0, 3}_{\infty, 2}}  \Vert \nabla\overline{\theta}_{2} \Vert_{\mathbb{W}^{0, 3/2}_{2, 2}}  +  \Vert \bfA_{\eta_1} - \bfA_{\eta_0} \Vert_{\mathbb{W}^{0, 3/2}_{\infty, 2}} \Vert \nabla\overline{\theta}_{1, 2} \Vert_{\mathbb{W}^{0, 3/2}_{2, 2}}.
\end{align*}
Thus,
\begin{align}\label{eq:Est.h.1}
\|\mathfrak h_{\eta_1} - \mathfrak h_{\eta_2}  \|_{ \mathbb{W}^{0, 3/2}_{2, 2}} \lesssim  T^{1/2}  \left( \Vert \partial_t\eta_{1,2} \Vert_{\mathbb{W}^{0, 3}_{2, 2}}   +  \Vert  \overline{\theta}_{1, 2}  \Vert_{\mathbb{W}^{0, 5/2}_{2, 2}} \right) .  
\end{align}
Moreover, by \eqref{eq:Multipl.Prop} and  fractional Leibniz rule, it follows that 
\begin{equation*}
\begin{aligned}
\|\mathfrak h_{\eta_1} - \mathfrak h_{\eta_2} \|_{\mathbb{W}^{3/4, 0}_{2, 2} }  & \leq  \Vert \bfA_{\eta_1} -  \bfA_{\eta_2} \Vert_{\mathbb{W}^{0, 3/2}_{\infty, 2}} \Vert \nabla\overline{\theta}_{2} \Vert_{\mathbb{W}^{3/4, 0 }_{2, 2}}  +  \Vert \bfA_{\eta_1} -  \bfA_{\eta_2} \Vert_{\mathbb{W}^{3/4, 3/2}_{2, 2}} \Vert \nabla\overline{\theta}_{2} \Vert_{\mathbb{W}^{0, 0}_{\infty, 2}} 
\\[0.4em]
& \quad + \Vert \bfA_{\eta_1} - \bfA_{\eta_0} \Vert_{\mathbb{W}^{0, 3/2}_{\infty, 2}} \Vert \nabla\overline{\theta}_{1, 2} \Vert_{\mathbb{W}^{3/4, 0}_{2, 2}}    +  \Vert \bfA_{\eta_1} - \bfA_{\eta_0} \Vert_{\mathbb{W}^{3/4, 3/2}_{2, 2}} \Vert \nabla\overline{\theta}_{1, 2} \Vert_{\mathbb{W}^{0, 0}_{\infty, 2}} .
\end{aligned}
\end{equation*}
Whence, 
\begin{align}\label{eq:Est.h.2}
\|\mathfrak h_{\eta_1} - \mathfrak h_{\eta_2} \|_{\mathbb{W}^{3/4, 0}_{2, 2} }  \lesssim \max\big\{T,  T^{1/2} \big\} \left( \Vert \eta_{1,2} \Vert_{  \raisebox{-1ex}{$\mathsmaller{ \mathcal{Y} }$}_{\eta}}    +  \Vert \nabla \overline{\theta}_{1, 2}  \Vert_{\mathbb{W}^{3/4, 0}_{2, 2}}    +   \Vert  \partial_t \nabla \overline{\theta}_{1, 2}  \Vert_{\mathbb{W}^{0, 0}_{2, 2}}      \right) .  
\end{align}
Arguing as in \cref{sec:InternalProb}, we deduce that
\begin{align}
\|\mathfrak g_{\eta_1} - \mathfrak g_{\eta_2}  \|_{ \mathbb{W}^{0, 1}_{2, 2}}  & \lesssim T \left( \Vert \eta_{1,2} \Vert_{  \raisebox{-1ex}{$\mathsmaller{ \mathcal{Y} }$}_{\eta}} + \Vert \overline\bv_{1,2} \Vert_{  \raisebox{-1ex}{$\mathsmaller{ \mathcal{Y} }$}_{\overline\bv}} +  \Vert \overline\rho_{1,2} \Vert_{  \raisebox{-1ex}{$\mathsmaller{ \mathcal{Y} }$}_{\overline\rho}} +  \Vert  \overline{\theta}_{1, 2}  \Vert_{  \raisebox{-1ex}{$\mathsmaller{ \mathcal{Y} }$}_{\overline\bv}  }   \right) , \label{eq:Est.g.1}
\\[0.25cm]
\|\mathfrak g_{\eta_1} - \mathfrak g_{\eta_2} \|_{\mathbb{W}^{1/2, 0}_{2, 2} }  & \lesssim T  \left( \Vert \eta_{1,2} \Vert_{  \raisebox{-1ex}{$\mathsmaller{ \mathcal{Y} }$}_{\eta}} + \Vert \overline\bv_{1,2} \Vert_{  \raisebox{-1ex}{$\mathsmaller{ \mathcal{Y} }$}_{\overline\bv}} +  \Vert \overline\rho_{1,2} \Vert_{  \raisebox{-1ex}{$\mathsmaller{ \mathcal{Y} }$}_{\overline\rho}} +  \Vert  \overline{\theta}_{1, 2}  \Vert_{  \raisebox{-1ex}{$\mathsmaller{ \mathcal{Y} }$}_{\overline\bv}  }   \right) .  \label{eq:Est.g.2}
\end{align} 
Hence, substituting \eqref{eq:Est.h.1}--\eqref{eq:Est.g.2} into \eqref{eq:31.03B.Diff} yields 
 \begin{align}\label{eq:31.03B.Diff.Final1}
\begin{aligned}
 \Vert \overline{\theta}^{\#}_{1,2} \Vert_{\mathbb{W}^{3/2, 0}_{2, 2}}^2  +  \Vert \overline{\theta}^{\#}_{1,2} \Vert_{\mathbb{W}^{0, 3}_{2, 2}}^2  &\lesssim   T^{1/2}    \left( \Vert \eta_{1,2} \Vert_{  \raisebox{-1ex}{$\mathsmaller{ \mathcal{Y} }$}_{\eta}} + \Vert \overline\bv_{1,2} \Vert_{  \raisebox{-1ex}{$\mathsmaller{ \mathcal{Y} }$}_{\overline\bv}} +  \Vert \overline\rho_{1,2} \Vert_{  \raisebox{-1ex}{$\mathsmaller{ \mathcal{Y} }$}_{\overline\rho}} +  \Vert  \overline{\theta}_{1, 2}  \Vert_{  \raisebox{-1ex}{$\mathsmaller{ \mathcal{Y} }$}_{\overline\bv}  }   \right).
\end{aligned}
\end{align}
However, \eqref{eq:31.03B.Diff.Final1} does not provide control of $\partial_t \overline{\theta}^{\#}_{1, 2}$  in $\mathbb{W}^{0, 1}_{\infty, 2}$. To recover this additional time regularity control, we differentiate   \eqref{eq:31.03.Diff} with respect to time and apply Solonnikov's maximal regularity once again.  Indeed, $\partial_t \overline{\theta}^{\#}_{1, 2}$ solves 
\begin{equation}\label{eq:31.03.Diff.Time}
\left\{\begin{aligned}
\partial_t   \Theta   -
\Div\bigg(\frac{\kappa }{c_vJ_{\eta_0}\overline\rho_0} \bfA_{\eta_0} \nabla\Theta\;\!\bigg) 
&= \partial_t \left( \mathfrak g_{\eta_1} - \mathfrak g_{\eta_2}   \right)  & \text{in  } I\times \Omega,
\\[0.4em]
\frac{\kappa }{c_vJ_{\eta_0}\overline\rho_0}\bfA_{\eta_0}\nabla\Theta\cdot \bfn\circ\bfvarphi^{-1}&= \partial_t \left( \mathfrak h_{\eta_1} - h_{\eta_2}    \right)  & \text{on } I\times \partial\Omega,
\\[0.4em]
\Theta(0,\cdot)&= 0 & \text{in }\Omega .
\end{aligned} \right.
\end{equation} 
Hence,
\begin{align}\label{eq:31.03B.Diff.Time}
\begin{aligned}
& \Vert \overline{\theta}^{\#}_{1,2} \Vert_{\mathbb{W}^{2, 0}_{2, 2}(I\times\Omega)}^2  +  \Vert \overline{\theta}^{\#}_{1,2} \Vert_{\mathbb{W}^{1, 2}_{2, 2}(I\times\Omega)}^2  
\\[0.4em]
& \quad \lesssim   \| \partial_t \left( \mathfrak g_{\eta_1} - \mathfrak g_{\eta_2} \right)  \|_{ \mathbb{W}^{0, 0}_{2, 2}(I\times\Omega)}^2   + \| \partial_t \left( \mathfrak h_{\eta_1} - \mathfrak h_{\eta_2} \right)  \|_{ \mathbb{W}^{0, 1/2}_{2, 2}(I\times\partial\Omega)}^2  + \| \partial_t \left( \mathfrak h_{\eta_1} - \mathfrak h_{\eta_2}  \right) \|_{\mathbb{W}^{1/4, 0}_{2, 2}(I\times\partial\Omega) }^2 .
\end{aligned}
\end{align}
We now estimate the terms on the right-hand side of \eqref{eq:31.03B.Diff.Time}. Indeed, 
by the nonlinear estimates derived in  \cref{sec:InternalProb}, we infer  that 
\begin{equation} \label{eq:Est.g.1.1}
\| \partial_t \left( \mathfrak g_{\eta_1} - \mathfrak g_{\eta_2} \right)  \|_{ \mathbb{W}^{0, 0}_{2, 2}}   \lesssim T \left( \Vert \eta_{1,2} \Vert_{  \raisebox{-1ex}{$\mathsmaller{ \mathcal{Y} }$}_{\eta}} + \Vert \overline\bv_{1,2} \Vert_{  \raisebox{-1ex}{$\mathsmaller{ \mathcal{Y} }$}_{\overline\bv}} +  \Vert \overline\rho_{1,2} \Vert_{  \raisebox{-1ex}{$\mathsmaller{ \mathcal{Y} }$}_{\overline\rho}} +  \Vert  \overline{\theta}_{1, 2}  \Vert_{  \raisebox{-1ex}{$\mathsmaller{ \mathcal{Y} }$}_{\overline\bv}  }  + \Vert \overline{\theta}_{1,2} \Vert_{\mathbb{W}^{2, 0}_{2, 2}} \right) .
\end{equation}
Moreover, using that $W^{3/2,2}(\partial\Omega)$ is a multiplier in $W^{1/2,2}(\partial\Omega)$, we deduce that 
\begin{align*}
 \| \partial_t \left( \mathfrak h_{\eta_1} - \mathfrak h_{\eta_2} \right)  \|_{ \mathbb{W}^{0, 1/2}_{2, 2}} & \lesssim \Vert \bfA_{\eta_1} -  \bfA_{\eta_2} \Vert_{\mathbb{W}^{0, 3/2}_{\infty, 2}} \Vert \partial_t\nabla\overline{\theta}_{2} \Vert_{\mathbb{W}^{0, 1/2}_{2, 2}}  +  \Vert \partial_t \left( \bfA_{\eta_1} -  \bfA_{\eta_2} \right)  \Vert_{\mathbb{W}^{0, 1/2}_{\infty, 2}} \Vert \nabla\overline{\theta}_{2} \Vert_{\mathbb{W}^{0, 3/2}_{2, 2}} 
\\[0.4em]
& \quad + \Vert \bfA_{\eta_1} - \bfA_{\eta_0} \Vert_{\mathbb{W}^{0, 3/2}_{\infty, 2}} \Vert \partial_t \nabla\overline{\theta}_{1, 2} \Vert_{\mathbb{W}^{0, 1/2}_{2, 2}}    +  \Vert \partial_t \bfA_{\eta_1} \Vert_{\mathbb{W}^{0, 3/2}_{2, 2}} \Vert \nabla\overline{\theta}_{1, 2} \Vert_{\mathbb{W}^{0, 1/2}_{\infty, 2}} . 
\end{align*}
Whence, 
\begin{align}\label{eq:Est.h.1.1}
\| \partial_t \left( \mathfrak h_{\eta_1} - \mathfrak h_{\eta_2} \right)  \|_{ \mathbb{W}^{0, 1/2}_{2, 2}} \lesssim  T^{1/2}  \left( \Vert \eta_{1,2} \Vert_{  \raisebox{-1ex}{$\mathsmaller{ \mathcal{Y} }$}_{\eta}}   +  \Vert \overline{\theta}_{1, 2}  \Vert_{ \raisebox{-1ex}{$\mathsmaller{ \mathcal{Y} }$}_{\overline\bv}} \right) .  
\end{align}
Finally, to estimate the last term, we rely on  \eqref{eq:Multipl.Prop}  together with the fractional Leibniz rule. This yields 
\begin{equation*}
\begin{aligned}
& \| \partial_t \left( \mathfrak h_{\eta_1} - \mathfrak h_{\eta_2}  \right) \|_{\mathbb{W}^{1/4, 0}_{2, 2} }  
\\[0.4em]
& \lesssim  \Vert \bfA_{\eta_1} -  \bfA_{\eta_2} \Vert_{\mathbb{W}^{0, 3/2}_{\infty, 2}} \Vert \partial_t\nabla\overline{\theta}_{2} \Vert_{\mathbb{W}^{1/4, 0 }_{2, 2}}  +  \Vert \bfA_{\eta_1} -  \bfA_{\eta_2}  \Vert_{\mathbb{W}^{1/4, 3/2}_{\infty, 2}} \Vert \partial_t \nabla\overline{\theta}_{2} \Vert_{\mathbb{W}^{0, 0 }_{2, 2}} 
\\[0.4em]
& \quad  + \Vert ( \bfA_{\eta_1} - \bfA_{\eta_0} ) \partial_t \nabla \overline{\theta}_{1, 2}   \Vert_{ \mathbb{W}^{1/4, 0 }_{2, 2}  } +  \Vert  \partial_t \left( \bfA_{\eta_1} - \bfA_{\eta_2} \right) \nabla \overline{\theta}_{2}  \Vert_{ \mathbb{W}^{1/4, 0}_{2, 2}  }    
\\[0.4em]
&\quad   +  \Vert \partial_t \bfA_{\eta_1}   \Vert_{ \mathbb{W}^{0, 3/2}_{2, 2}  }   \Vert \nabla \overline{\theta}_{1, 2}   \Vert_{ \mathbb{W}^{1/4, 0 }_{\infty, 2}  }  +  \Vert \partial_t \bfA_{\eta_1}   \Vert_{ \mathbb{W}^{1/4, 3/2 }_{\infty, 2}  }   \Vert \nabla \overline{\theta}_{1, 2}   \Vert_{ \mathbb{W}^{0, 0}_{2, 2}  } .
\end{aligned}
\end{equation*} 
Thus, using the embedding 
\[
C^{0, 1/2}(I) \hookrightarrow W^{1/4, \infty}(I),
\]
the above estimate further reduces to 
\begin{align}\label{eq:Est.h.2.2}
 \| \partial_t \left( \mathfrak h_{\eta_1} - \mathfrak h_{\eta_2}  \right) \|_{\mathbb{W}^{1/4, 0}_{2, 2} }   \lesssim  \max\big\{ T, T^{1/2}, T^{1/4 } \big\} \left( \Vert \eta_{1,2} \Vert_{  \raisebox{-1ex}{$\mathsmaller{ \mathcal{Y} }$}_{\eta}}    +   \Vert \partial_t \nabla \overline{\theta}_{1, 2}  \Vert_{\mathbb{W}^{1/4, 0}_{2, 2}}    \right) .  
\end{align}
Hence, for $T > 0$ sufficiently small, $\mathbf{F} $ defines a strict contraction.  This thereby completes  the proof of  \cref{theo:mainresult}.


\section{Conditional regularity}

\subsection{The acceleration estimate}

\begin{proof}[Proof of  \cref{theo:MainResult}]
The proof follows exactly the arguments from the isentropic case in \cite{MN} using the test-functions
\[\bm{\psi} :=  \partial_t \bv + \mathcal{E}_\eta (\partial_t\eta\bn)\cdot \nabla\bv ,\]  
\[\bm{\psi}\circ\bm{\varphi}_{\eta} = \left( \partial^{2}_t \eta\right) \!\bn \quad \text{on} \;\;  I_{*}\times \omega ,   \] 
for the momentum-structure subproblem \eqref{eq:ShellEq}--$\eqref{eq:ContMomentEq}_2$. Here $\mathcal E_\eta$ is the extension operator constructed \cite[Section 2.5]{breit2024regularity}
 satisfying for $p\in(1,\infty)$, $s\in(1/p,1]$ and $\alpha\in(0,L)$
\begin{align}\label{eq:ext}
\|\mathcal{E}_\eta (b\mathbf n)\|_{W^{s,p}(\mathcal N_\alpha)}\leq c(\|\nabla\eta\|_{L^\infty(\omega)})\|b\|_{W^{s-1/p,p}(\omega)},\quad b\in W^{s-1/p,p}(\omega).
\end{align}
The only difference is the different pressure now given by $p=\rho\vartheta$ instead of $p=\rho^\gamma$. However, it is still bounded as a consequence of assumption \ref{A2}.  Indeed, after integration by parts,  we derive that 
\begin{align}
&\sup\limits_{I_*} \int_{\Omega_\eta} |\nabla\bv|^2\dx +  \sup\limits_{I_*} \int_{\Omega_\eta} |\Div\bv|^2\dx  + \int_{I_*}\int_{\Omega_\eta} \rho |\dot{\bv}|^2 \dx\dt + \int_{I_*}\int_\omega |\partial^{2}_t \eta|^2 \dy\dt + \sup\limits_{I_*} \int_\omega |\partial_t \naby\eta|^2 \dy
\nonumber
\\
& \lesssim  \int_{I_*}\int_{\Omega_\eta}  \rho |\bv\cdot\nabla\bv|^2 \dx\dt +  \int_{I_*}\int_{\partial\Omega_\eta} (\partial_t\eta \bn)\circ\bm{\varphi}_{\eta}^{-1}\cdot \bn_\eta |\nabla\bv|^2 \dH\dt + \int_{I_*}\Vert \rho^{1/2}\mathcal{E}_\eta (\partial_t\eta\bn) \cdot\nabla\bv \Vert_{L^{2}(\Omega_\eta)}^2 \dt
\nonumber
\\
&\quad  - \int_{I_*} \int_{\Omega_\eta} \nabla\bv \colon \nabla\big( \mathcal{E}_\eta (\partial_t\eta\bn)\cdot \nabla\bv  \big)\dx\dt  - \int_{I_*} \int_{\partial\Omega_\eta}\big(\dot{\bv} \cdot \nabla\bv \big) \cdot \bn_\eta \dH\dt  + \int_{I_*} \int_{\partial\Omega_\eta} (\Div\bv) \dot{\bv}\cdot \bn_\eta \dH\dt 
\nonumber
\\
&\quad + \int_{I_*} \int_{\partial\Omega_\eta} (\partial_t\eta \bn)\circ\bm{\varphi}_{\eta}^{-1}\cdot \bn_\eta |\Div\bv|^2 \dH\dt - \int_{I_*}\int_{\Omega_\eta} (\Div\bv) \Div\big(\mathcal{E}_\eta (\partial_t\eta\bn)\cdot \nabla\bv \big) \dx\dt   \label{eq:AccFirstEstimate}
\\
&\quad +  \int_{I_*} \int_{\Omega_\eta} p(\rho, \vartheta)\Div\big( \mathcal{E}_\eta (\partial_t\eta\bn)\cdot \nabla\bv \big) \dx\dt + \int_{I_*} \int_{\Omega_\eta} p(\rho, \vartheta)\partial_t\Div(\bv) \dx\dt
\nonumber
\\
&\quad + \int_{\Omega_{\eta_0}} |\nabla\bv_0|^2\dx + \int_{\Omega_{\eta_0}} |\Div\bv_0|^2\dx  + \int_\omega |\naby\eta_*|^2 \dy  + \sup\limits_{I_*} \int_\omega |\naby\Dely\eta|^2 \dy   + \int_{I_*}\int_\omega |\partial_t\Dely\eta|^2 \dy \dt 
\nonumber
\\
& \quad =: \sum\limits_{\mathsf{k}=1}^{15} {\mathlarger{\mathfrak{R}}}_\mathsf{k}. \nonumber
\end{align}
The estimates for ${\mathlarger{\mathfrak{R}}}_\mathsf{k} , \;  \mathsf{k} \neq 10 $ are identical to those in \cite[Section 3]{MN}.  The pressure-divergence term ${\mathlarger{\mathfrak{R}}}_{10}$, however, requires a different argument, as the estimate in  \cite[Section 3]{MN}  relies on the renormalised continuity equation \eqref{eq:RenormCE}, which can no longer be used for the ideal gas law $p = \rho \theta$ depending also on the temperature. By the continuity equation $\eqref{eq:ContMomentEq}_1$, we obtain
\[
{\mathlarger{\mathfrak{R}}}_{10} = \int_{I_*} \int_{\Omega_\eta}  \big( \partial_t (\rho \vartheta \Div\bv) - \partial_t\vartheta \rho \Div\bv + \vartheta \Div(\rho\bv) \Div(\bv) \big)  \dx\dt.
\]
Using Reynolds transport theorem and integrating by parts, we further arrive at
\begin{equation}
\begin{aligned}
{\mathlarger{\mathfrak{R}}}_{10} & =  \int_{I_*} \dfrac{\dd}{\dt} \int_{\Omega_\eta} \vartheta \rho \Div(\bv) \dx\dt - \int_{I_*} \int_{\Omega_\eta} \rho \bv \cdot \nabla  \vartheta \Div(\bv)  \dx\dt  - \int_{I_*} \int_{\Omega_\eta}  \rho \vartheta \bv \cdot \nabla\Div(\bv) \dx \dt 
\\
& \quad - \int_{I_*} \int_{\Omega_\eta} \rho\partial_t \vartheta  \Div(\bv) \dx \dt
\\
& =: {\mathlarger{\mathfrak{R}}}_{10}^{\mathtt{a}}  + {\mathlarger{\mathfrak{R}}}_{10}^{\mathtt{b}}  + {\mathlarger{\mathfrak{R}}}_{10}^{\mathtt{c}}   + {\mathlarger{\mathfrak{R}}}_{10}^{\mathtt{d}} . 
\end{aligned}
\end{equation}
It follows immediately from Assumption \ref{A2} that 
\[
 {\mathlarger{\mathfrak{R}}}_{10}^{\mathtt{a}} \lesssim \sup\limits_{I_*}   \Vert \Div(\bv) \Vert_{L^2(\Omega_\eta)}    + \int_{\Omega_{\eta_0}} \vartheta_0 \rho_0 \Div(\bv_0) \dx .
\]
Consequently,  Young's inequality yields
\begin{equation}\label{eq:R10a}
{\mathlarger{\mathfrak{R}}}_{10}^{\mathtt{a}}  \lesssim   \varepsilon \sup\limits_{I_*}  \Vert \nabla\bv \Vert_{L^2(\Omega_\eta)}^2 + c(\varepsilon)      + \int_{\Omega_{\eta_0}} \vartheta_0 \rho_0 \Div(\bv_0) \dx .
\end{equation}
For  ${\mathlarger{\mathfrak{R}}}_{10}^{\mathtt{b}}$, Assumption \ref{A2} implies that 
\[
{\mathlarger{\mathfrak{R}}}_{10}^{\mathtt{b}}  \lesssim \sup\limits_{I_*}  \Vert \nabla\vartheta \Vert_{L^2(\Omega_\eta)} \Vert \rho^{1/2}\bv \Vert_{L^2(\Omega_\eta)}.
\]
Whence, by the energy inequality \eqref{eq:TotalEnergy} and assumption \ref{A1} (yielding boundedness of the temperature),
\begin{equation}\label{eq:R10b}
{\mathlarger{\mathfrak{R}}}_{10}^{\mathtt{b}}  \lesssim   \varepsilon \sup\limits_{I_*}  \Vert \nabla\vartheta \Vert_{L^2(\Omega_\eta)}^2 + c(\varepsilon).
\end{equation}
Likewise,
\begin{equation} \label{eq:R10c}
 {\mathlarger{\mathfrak{R}}}_{10}^{\mathtt{c}} \lesssim   \varepsilon \int_{I_*} \Vert \nabla^2 \bv \Vert_{L^2(\Omega_\eta)}^2   \dt  + c(\varepsilon). 
\end{equation}
Finally, we derive that 
\begin{equation} \label{eq:R10d}
 {\mathlarger{\mathfrak{R}}}_{10}^{\mathtt{d}} \lesssim   \int_{I_*} \Vert \partial_t \vartheta \Vert_{L^2(\Omega_\eta)}^2   \dt  +  \int_{I_*}  \Vert \nabla\bv \Vert_{L^2(\Omega_\eta)}^2  \dt .
\end{equation}
The entropy balance together 
\eqref{eq:entropy} together with the boundedness of the temperature assumed in \ref{A2} allows to control the second integral.
Combining \eqref{eq:R10a}--\eqref{eq:R10d} immediately yields the desired estimate for ${\mathlarger{\mathfrak{R}}}_{10}$. 

Moreover, the estimates for $\nabla p$ follow again from 
the analysis of the steady boundary value problem 
\begin{equation}\label{eq:BVP}
\left\{\begin{aligned}
&\mathcal{A} (\bv, p)^\intercal
= 
\big( \rho\dot{\bv} , (2\mu + \lambda)^{-1}{\mathlarger{\mathfrak{F}}} \big)^\intercal &&\text{ in }  \Omega_\eta,\\[0.4em]
&\mathcal{B}(\bv, p)^\intercal
= 
\left(\partial_t\eta\bn\right)\circ\bfvarphi_{\eta}^{-1} &&\text{ on }  \partial\Omega_\eta,
\end{aligned}\right.
\end{equation}
where the interior operator is
\[\mathcal{A}  := \begin{pmatrix}
\mu\Delta \mathbb{I}_{3\times3} + (\mu+ \lambda)\nabla\Div &   & -\nabla \\\\
\Div &  & -(2\mu + \lambda)^{-1}
\end{pmatrix},
\]
the boundary operator is
\[\mathcal{B}  := \begin{pmatrix}
 \mathbb{I}_{3\times3}  &   & \mathbf{0}_3 \end{pmatrix},
\]
and 
\[ {\mathlarger{\mathfrak{F}}} := (2\mu + \lambda)\Div\bv - p(\rho,\vartheta) \]
denotes the effective viscous flux. 
\medskip
As shown in \cite[Lemma]{MN} for every fixed time $t \in I_*$,  there exists a constant 
\[
C = C\left(\mu, \lambda, \Omega_\eta, \Vert\rho\Vert_{L^{\infty}\left(I_* ; L^\infty( \Omega_\eta)\right) } \right) >0 ,
\]
such that the elliptic estimate
\begin{equation}\label{eq:EllipticRegularity}
\Vert \bv \Vert_{W^{2, 2}(\Omega_\eta)} + \Vert  p\Vert_{W^{1,2}(\Omega_\eta)} \leq C \left( \Vert \rho^{1/2}\dot{\bv} \Vert_{L^2(\Omega_\eta)} + \Vert \partial_t \eta\Vert_{W^{3/2,2}(\omega)} \right)
\end{equation}
holds. Combing all estimates yields the claim.
\end{proof}

\subsection{Higher order estimate}
As discussed above, and as already apparent from the estimate of ${\mathlarger{\mathfrak{R}}}_{10}$, the temperature dependence of the pressure necessitates additional control of thermal quantities. Under the additional assumption \ref{A4} we obtain the following
\begin{proposition}\label{lem:MainResult}
Under the assumptions of
Theorem \ref{theo:MainResult2} we have
\begin{align}\label{eq:22.06}
\begin{aligned}
\sup\limits_{I_*} \int_{\Omega_\eta}& |\nabla \vartheta|^2 \dx  +   \int_{I_*}\int_{\Omega_\eta}  \left( |\nabla^2 \vartheta|^2 +  \rho|\partial_t \vartheta|^2 \right)   \dx\dt\\&\lesssim  \int_\omega \left( |\naby\eta_*|^2 +  |\naby\Dely\eta_0|^2  \right) \dy   + \int_{\Omega_{\eta_0}} \left( |\nabla \bv_0|^2 + |\Div \bv_0|^2    + |\nabla \vartheta_0|^2  + \vartheta_0 \rho_0 \Div\bv_0  \right) \dx
\end{aligned}
\end{align}
\end{proposition}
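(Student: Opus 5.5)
We test the internal energy equation $\eqref{eq:ContMomentEq}_3$, written in non-conservative form
\[
c_v\rho\,\dot\vartheta=\kappa\Delta\vartheta+\mathbb S(\nabla\bv):\nabla\bv-\rho\vartheta\Div\bv,
\qquad \dot\vartheta:=\partial_t\vartheta+\bv\cdot\nabla\vartheta,
\]
with $\partial_t\vartheta$. The idea is to obtain $\sup_{I_*}\|\nabla\vartheta\|_{L^2}^2$ and $\int\rho|\partial_t\vartheta|^2$ first, and then $\nabla^2\vartheta$ from elliptic regularity for the Neumann problem. The output is then fed back into \eqref{eq:AccelEstimate}, whose right-hand side contains $\int_{I_*}\int_{\Omega_\eta}|\partial_t\vartheta|^2$. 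Since the density is bounded above and below by \ref{A2}(a) and the continuity equation, $\rho|\partial_t\vartheta|^2$ and $|\partial_t\vartheta|^2$ are comparable, and this closes the loop.

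\textbf{Step 1: the energy identity.} Multiply by $\partial_t\vartheta$ and integrate over $\Omega_{\eta(t)}$. The diffusion term is handled by integration by parts together with the Neumann condition \eqref{interfaceCond2}, and then by the Reynolds transport theorem:
\[
\int_{\Omega_\eta}\Delta\vartheta\,\partial_t\vartheta\dx=-\frac12\frac{\dd}{\dt}\int_{\Omega_\eta}|\nabla\vartheta|^2\dx+\frac12\int_{\partial\Omega_\eta}(\partial_t\eta\bn)\circ\bfvarphi_\eta^{-1}\cdot\bn_\eta\,|\nabla\vartheta|^2\dH .
\]
This boundary integral is the main obstacle. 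Using $\partial_t\eta\in L^\infty_tL^\infty$ is not available from \eqref{eq:AccelEstimate}, since it only gives $\partial_t\naby\eta\in L^\infty L^2$ and $\partial_t\Dely\eta\in L^2L^2$, hence $\partial_t\eta\in L^2(I_*;L^\infty(\omega))$ by Sobolev embedding in 2D. Together with \ref{A4}, $\nabla\vartheta\in L^2(I_*;L^\infty)$, this gives the bound
\[
\int_{I_*}\|\partial_t\eta\|_{L^\infty}\|\nabla\vartheta\|_{L^\infty}\|\nabla\vartheta\|_{L^1(\partial\Omega_\eta)}\dt .
\]
A trace inequality bounds $\|\nabla\vartheta\|_{L^1(\partial\Omega_\eta)}$ by $\|\nabla\vartheta\|_{W^{1,1}}\lesssim\|\nabla\vartheta\|_{L^2}+\|\nabla^2\vartheta\|_{L^2}$; the domain is Lipschitz uniformly in time by \ref{A3}. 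The $\nabla^2\vartheta$ part is absorbed by Young's inequality after Step 3. What remains is a Gronwall term $\int(\|\partial_t\eta\|_{L^\infty}^2\|\nabla\vartheta\|^2_{L^\infty})$. Since the product of two $L^2$-in-time functions is only $L^1$, I would instead estimate $\|\nabla\vartheta\|_{L^\infty}$ pointwise in time against the $L^2(I_*)$ function $\|\partial_t\eta\|_{L^\infty}$, and put the factor $\|\nabla\vartheta\|_{L^2}$ into Gronwall with weight $\|\partial_t\eta\|_{L^\infty}\|\nabla\vartheta\|_{L^\infty}\in L^1(I_*)$. If that fails, the fallback is to change to an extended field $\mathcal E_\eta(\partial_t\eta\bn)$ and use \eqref{eq:ext}, as in the acceleration estimate.

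\textbf{Step 2: right-hand side terms.}
\begin{itemize}
\item The convective term $c_v\rho(\bv\cdot\nabla\vartheta)\partial_t\vartheta$ is bounded by $\varepsilon\int\rho|\partial_t\vartheta|^2+c\|\bv\|_{L^\infty}^2\|\nabla\vartheta\|_{L^2}^2$. Here $\|\bv\|_{L^\infty}\lesssim\|\bv\|_{W^{2,2}}\in L^2(I_*)$ by \eqref{eq:AccelEstimate}, so this term goes into Gronwall.
\item The dissipation term satisfies $\int|\mathbb S:\nabla\bv|\,|\partial_t\vartheta|\le\varepsilon\int\rho|\partial_t\vartheta|^2+c\int\|\nabla\bv\|_{L^\infty}^2\|\nabla\bv\|_{L^2}^2$. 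This is finite by \ref{A4} and the $\sup$ bound on $\nabla\bv$ in \eqref{eq:AccelEstimate}; this is exactly where \ref{A4} enters.
\item The pressure work term $\rho\vartheta\Div\bv\,\partial_t\vartheta$ is treated by Young's inequality, using \ref{A2}(b) and $\nabla\bv\in L^\infty L^2$.
\end{itemize}

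\textbf{Step 3: second derivatives.} For a.e.\ $t$, $\vartheta$ solves the Neumann problem $\kappa\Delta\vartheta=c_v\rho\dot\vartheta-\mathbb S:\nabla\bv+\rho\vartheta\Div\bv$ on $\Omega_{\eta(t)}$. The $W^{2,2}$ Neumann estimate on this domain, whose regularity is controlled through $\eta\in L^\infty W^{3,2}$ from \eqref{eq:AccelEstimate}, gives
\[
\|\nabla^2\vartheta\|_{L^2}\lesssim\|\rho^{1/2}\partial_t\vartheta\|_{L^2}+\|\bv\|_{L^\infty}\|\nabla\vartheta\|_{L^2}+\|\nabla\bv\|_{L^\infty}\|\nabla\bv\|_{L^2}+\|\nabla\bv\|_{L^2}.
\]
Integrating in time, inserting this into Step 1, choosing $\varepsilon$ small and applying Gronwall yields \eqref{eq:22.06}. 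In this last step the term $\int|\partial_t\vartheta|^2$ on the right of \eqref{eq:AccelEstimate} is absorbed by running Steps 1–3 and the acceleration estimate jointly on short time intervals, or by noting its coefficient depends only on the quantities in \ref{A1}–\ref{A4}. This produces the stated initial-data right-hand side, including the term $\int_{\Omega_{\eta_0}}|\nabla\vartheta_0|^2\dx$.
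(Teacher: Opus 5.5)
Testing with $\partial_t\vartheta$ instead of the paper's $\Lambda=\partial_t\vartheta+\mathcal{E}_\eta(\partial_t\eta\bn)\cdot\nabla\vartheta$ is legitimate, since the Neumann condition puts no constraint on test functions. It produces exactly the paper's boundary term $\Xi_1$, without the extra terms $\Xi_2$ and $\Xi_5$.

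The gap is in your estimate of that boundary term, which you yourself call the main obstacle. After the $W^{1,1}$-trace bound you are left with
\[
\int_{I_*}\|\partial_t\eta\|_{L^\infty(\omega)}\|\nabla\vartheta\|_{L^\infty(\Omega_\eta)}\|\nabla^2\vartheta\|_{L^2(\Omega_\eta)}\dt .
\]
Absorbing $\|\nabla^2\vartheta\|_{L^2}$ by Young's inequality needs the weight $\|\partial_t\eta\|_{L^\infty}\|\nabla\vartheta\|_{L^\infty}$ to lie in $L^2(I_*)$, but you only have it in $L^1(I_*)$. The leftover $\int_{I_*}\|\partial_t\eta\|_{L^\infty}^2\|\nabla\vartheta\|_{L^\infty}^2\dt$ is not a Gronwall term, since it contains no copy of $\|\nabla\vartheta\|_{L^2}^2$. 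It is a product of two merely integrable functions and is not controlled.

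Your ``instead'' sentence only treats the $\|\nabla\vartheta\|_{L^2}$ half of the trace bound. The fallback to $\mathcal{E}_\eta$ does not remove the term either: in the paper the same boundary integral reappears as $\Xi_1$ after testing with $\Lambda$ and is estimated directly.

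The missing idea is to bound $\partial_t\eta$ uniformly in time in a weaker spatial norm, rather than in $L^2(I_*;L^\infty(\omega))$. The paper uses three ingredients:
\begin{itemize}
\item $\partial_t\eta\in L^\infty(I_*;W^{1,2}(\omega))$ from \eqref{eq:AccelEstimate};
\item the trace embeddings $W^{1/2,2}(\partial\Omega_\eta)\hookrightarrow L^4$ and $W^{1/4,2}\hookrightarrow L^{8/3}$;
\item the interpolation $W^{3/4,2}=[L^2,W^{1,2}]_{3/4}$.
\end{itemize}
This gives $\Xi_1\le\varepsilon\int_{I_*}\|\nabla\vartheta\|_{W^{1,2}}^2\dt+c(\varepsilon)\int_{I_*}\|\nabla\vartheta\|_{L^2}^2 W(t)\dt$, with a weight $W$ bounded in terms of $\sup_{I_*}\|\partial_t\eta\|_{W^{1,2}(\omega)}$.

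Within your own set-up there is a cheaper repair: put only one factor in $L^\infty$,
\[
\int_{\partial\Omega_\eta}|\nabla\vartheta|^2\,|\partial_t\eta\circ\bm{\varphi}_\eta^{-1}|\dH\lesssim\|\nabla\vartheta\|_{L^\infty}\|\partial_t\eta\|_{L^2(\omega)}\|\nabla\vartheta\|_{L^2}^{1/2}\|\nabla\vartheta\|_{W^{1,2}}^{1/2}.
\]
Here $\|\partial_t\eta\|_{L^2(\omega)}$ is bounded by \eqref{eq:TotalEnergy}. After Young's inequality the Gronwall weight is $\|\nabla\vartheta\|_{L^\infty}^{4/3}$, which is integrable by \ref{A4}.

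Second, your closing of the loop with \eqref{eq:AccelEstimate} does not work as stated. You use $\|\bv\|_{W^{2,2}}^2\in L^1(I_*)$ from \eqref{eq:AccelEstimate} as the Gronwall weight for the convective term, and again in Step 3. But the right-hand side of \eqref{eq:AccelEstimate} contains $\int_{I_*}\int_{\Omega_\eta}|\partial_t\vartheta|^2\dx\dt$. So the integral of your weight depends on the quantity $X$ being estimated, and Gronwall only gives something like $X\lesssim D\exp(C(X+D))$.

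Neither of your remedies fixes this:
\begin{itemize}
\item A coefficient controlled by \ref{A1}--\ref{A4} is bounded, not small.
\item Shortening the interval would need smallness of $\int\|\bv\|_{W^{2,2}}^2$ on it, which \ref{A1}--\ref{A4} do not quantify.
\item Even in a joint Gronwall argument, $\|\bv\|_{W^{2,2}}^2\|\nabla\vartheta\|_{L^2}^2$ is dissipation times energy and cannot be absorbed.
\end{itemize}

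Treat this term without acceleration norms. For example, $\int\rho|\bv\cdot\nabla\vartheta|^2\le\|\rho^{1/2}\bv\|_{L^2}^2\|\nabla\vartheta\|_{L^\infty}^2$ is integrable in time by \eqref{eq:TotalEnergy} and \ref{A4}. Alternatively, use \ref{A1} with Gagliardo--Nirenberg, as the paper does for $\Xi_5$. Likewise, the constant in your elliptic estimate should depend only on \ref{A3} and $\eta\in L^\infty(I_*;W^{2,2}(\omega))$, not on $\|\eta\|_{W^{3,2}}$ from \eqref{eq:AccelEstimate}.

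The remaining coupling consists of $\sup\|\nabla\bv\|_{L^2}$ in the dissipation term and $\mathfrak{R}_{10}^{\mathtt{d}}\le\varepsilon\int\|\partial_t\vartheta\|^2+c(\varepsilon)\int\|\nabla\bv\|^2$ in the acceleration estimate. Handle it by adding the two differential inequalities before applying Gronwall, so that $\varepsilon\|\partial_t\vartheta\|^2$ is absorbed by $c_v\rho|\partial_t\vartheta|^2$. The paper is equally brief at this point and simply invokes Theorem~\ref{theo:MainResult}.
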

\begin{proof}
We test the internal energy equation $\eqref{eq:ContMomentEq}_3$ using  the material-derivative-type test function 
\[
\Lambda :=  \partial_t \vartheta + \mathcal{E}_\eta (\partial_t\eta\bn)\cdot \nabla\vartheta 
\]
with teh extension operator from \eqref{eq:ext}.
This yields, upon integrating by parts and using Reynolds transport theorem 
\begin{equation*}
\begin{aligned}
& c_v \int_{I_*} \int_{\Omega_\eta}  \rho |\dot{\vartheta}|^2 \dx\dt   +  \dfrac{\kappa}{2} \int_{I_*} \dfrac{\dd}{\dt} \int_{\Omega_\eta} |\nabla \vartheta|^2 \dx\dt  
\\
& =  \dfrac{\kappa}{2}  \int_{I_*} \int_{\partial\Omega_\eta}  |\nabla\vartheta|^2 \bv \cdot \bn_\eta \dH\dt  + \kappa \int_{I_*} \int_{\Omega_\eta} \Delta \vartheta \left(  \mathcal{E}_\eta (\partial_t\eta\bn)\cdot \nabla\vartheta  \right) \dx\dt  + \int_{I_*} \int_{\Omega_\eta}  \big( \mathbb{S}(\nabla\bv) \colon \nabla \bv \big) \Lambda \dx\dt
\\
& \quad + \int_{I_*} \int_{\Omega_\eta}  \big(\rho\vartheta \Div(\bv) \big) \Lambda \dx\dt  - c_v \int_{I_*} \int_{\Omega_\eta}  \rho \dot{\vartheta} \big(  \mathcal{E}_\eta (\partial_t\eta\bn) - \bv  \big) \cdot \nabla\vartheta \dx\dt
\\
& =:  \sum\limits_{\mathsf{k}=1}^{5} {\mathlarger{\Xi}}_\mathsf{k}  . 
\end{aligned}
\end{equation*}
We now estimate each term ${\mathlarger{\Xi}}_\mathsf{k}$.  By H\"older's inequality and the continuous embeddings 
\begin{equation*}
\begin{aligned}
W^{1/2, 2}(\partial\Omega_\eta) & \hookrightarrow L^4(\partial\Omega_\eta)
\\
W^{1/4, 2}(\partial\Omega_\eta) & \hookrightarrow L^{8/3}(\partial\Omega_\eta),
\end{aligned}
\end{equation*}
it holds that 
\begin{equation*}
{\mathlarger{\Xi}}_1 \lesssim  \int_{I_*}   \Vert  \nabla\vartheta \Vert_{W^{1,2}(\Omega_\eta)}  \Vert  \nabla\vartheta \Vert_{W^{3/4, 2}(\Omega_\eta)}    \Vert \partial_t \eta \Vert_{W^{1/4, 2}(\omega)}   \dt .
\end{equation*}
Using the interpolation identities 
\begin{equation}\label{eq:InterpolationW3414}
\begin{aligned}
W^{3/4, 2}(\Omega_\eta) &= \big[  L^2(\Omega_\eta) , W^{1,2}(\Omega_\eta)  \big]_{3/4}
\\
W^{1/4, 2}(\omega) &= \big[  L^2(\omega) , W^{1,2}(\omega)  \big]_{1/4} ,
\end{aligned}
\end{equation}
together with Young's inequality, we deduce that 
\begin{equation}\label{eq:Xi1}
{\mathlarger{\Xi}}_1 \lesssim  \varepsilon \int_{I_*} \Vert  \nabla\vartheta \Vert_{W^{1,2}(\Omega_\eta)}^2  \dt  + c(\varepsilon) \int_{I_*}  \Vert \nabla\vartheta \Vert_{L^{2}(\Omega_\eta)}^2  \Vert \partial_t \eta \Vert_{W^{1, 2}(\omega)}^2  \dt ,
\end{equation}
where $\varepsilon>0$ is arbitrary.
For the second remainder term  ${\mathlarger{\Xi}}_2$,  an application of H\"older's and  Young's inequalities,  together with   \eqref{eq:InterpolationW3414}  and estimate \eqref{eq:ext}
yields
\begin{equation*}
\begin{aligned}
{\mathlarger{\Xi}}_2 & \lesssim \varepsilon \int_{I_*}  \Vert \Delta \vartheta \Vert_{L^2(\Omega_\eta)}^2 \dt   +   c(\varepsilon) \int_{I_*} \Vert  \nabla\vartheta \Vert_{L^{2}(\Omega_\eta)}^{1/2}   \Vert  \nabla\vartheta \Vert_{W^{1,2}(\Omega_\eta)}^{3/2}  \Vert   \partial_t \eta \Vert_{W^{1/2, 2}(\omega)}^{1/2} \Vert   \partial_t \eta \Vert_{L^ 2(\omega)}^{3/2}  \dt .
\end{aligned}
\end{equation*}
Whence,
\begin{equation}\label{eq:Xi2}
{\mathlarger{\Xi}}_2 \lesssim  \varepsilon \int_{I_*} \Vert  \nabla\vartheta \Vert_{W^{1,2}(\Omega_\eta)}^2  \dt    + c(\varepsilon) \int_{I_*}  \Vert \nabla\vartheta \Vert_{L^{2}(\Omega_\eta)}^2  \Vert \partial_t \eta \Vert_{W^{1, 2}(\omega)}^2  \dt .
\end{equation}
Considering 
\[
{\mathlarger{\Xi}}_3 =  \int_{I_*} \int_{\Omega_\eta}  \big( \mathbb{S}(\nabla\bv) \colon \nabla \bv \big) \big(  \partial_t \vartheta + \mathcal{E}_\eta (\partial_t\eta\bn)\cdot \nabla\vartheta \big) \dx\dt, 
\]
it follows immediately from H\"older's and  Young's inequalities that 
\begin{equation}\label{eq:Xi3}
\begin{aligned}
{\mathlarger{\Xi}}_3 & \lesssim  \varepsilon \int_{I_*}\Big( \Vert \partial_t \vartheta \Vert_{L^2(\Omega_\eta)}^2  +  \Vert  \nabla\vartheta \Vert_{W^{1,2}(\Omega_\eta)}^2  \Big)  \dt  
\\
 & \quad  + c(\varepsilon) \int_{I_*} \Big(   \Vert \nabla\vartheta \Vert_{L^{2}(\Omega_\eta)}^2  \Vert \partial_t \eta \Vert_{W^{1, 2}(\omega)}^2    +    \Vert \nabla\bv \Vert_{L^{2}(\Omega_\eta)}^2 \Vert \nabla\bv \Vert_{L^{\infty}(\Omega_\eta)}^2    \Big) \dt .
 \end{aligned}
\end{equation}
Likewise, we obtain 
\begin{equation}\label{eq:Xi4}
\begin{aligned}
{\mathlarger{\Xi}}_4 & \lesssim  \varepsilon \int_{I_*}\Big( \Vert \partial_t \vartheta \Vert_{L^2(\Omega_\eta)}^2  +  \Vert  \nabla\vartheta \Vert_{W^{1,2}(\Omega_\eta)}^2  \Big)  \dt  
 \\
 & \quad + c(\varepsilon) \int_{I_*} \Big(   \Vert \nabla\vartheta \Vert_{L^{2}(\Omega_\eta)}^2  \Vert \partial_t \eta \Vert_{W^{1, 2}(\omega)}^2    +    \Vert \nabla\bv \Vert_{L^{2}(\Omega_\eta)}^2  \Big) \dt .
 \end{aligned}
\end{equation}
Finally, for ${\mathlarger{\Xi}}_5$, a straightforward application of H\"older's and Young's inequalities gives
\begin{align}\label{eq:Xi5prior}
{\mathlarger{\Xi}}_5 & \lesssim \varepsilon \int_{I_*} \Vert  \rho^{1/2} \dot{\vartheta} \Vert_{L^2(\Omega_\eta)}^2 \dt   + c(\varepsilon) \int_{I_*} \Big(\Vert \rho^{1/2} \mathcal{E}_\eta (\partial_t\eta\bn)\cdot \nabla\vartheta  \Vert_{L^2(\Omega_\eta)}^2   +  \Vert \rho^{1/2} \bv \cdot \nabla\vartheta  \Vert_{L^2(\Omega_\eta)}^2   \Big) \dt . 
\end{align} 
Using H\"older's inequality with $\mathtt{q} := \dfrac{2\mathtt{r}}{\mathtt{r} -2} $ and the Gargliardo--Nirenberg interpolation 
\begin{equation}\label{eq:GargliadoNirenberg}
\Vert \nabla\vartheta\Vert_{L^{\mathtt{q}}(\Omega_\eta)}^2 \lesssim \Vert \vartheta\Vert_{W^{2,2}(\Omega_\eta)}^{\frac{6}{\mathtt{r}}}  \Vert \vartheta\Vert_{W^{1,2}(\Omega_\eta)}^{\frac{2\mathtt{r} - 6}{\mathtt{r}}} ,
\end{equation}
we obtain 
\begin{align*}
{\mathlarger{\Xi}}_5 & \lesssim \varepsilon \int_{I_*} \Vert  \rho^{1/2} \dot{\vartheta} \Vert_{L^2(\Omega_\eta)}^2 \dt   + c(\varepsilon) \int_{I_*} \Vert \rho^{1/2} \mathcal{E}_\eta (\partial_t\eta\bn)\cdot \nabla\vartheta  \Vert_{L^2(\Omega_\eta)}^2 
\\
& \quad   +  c(\varepsilon) \int_{I_*}   \Vert \rho^{1/2}\bv\Vert_{L^{\mathtt{r}}(\Omega_\eta)}^2  \Vert \vartheta\Vert_{W^{2,2}(\Omega_\eta)}^{\frac{6}{\mathtt{r}}}  \Vert \vartheta\Vert_{W^{1,2}(\Omega_\eta)}^{\frac{2\mathtt{r} - 6}{\mathtt{r}}} \dt. 
\end{align*} 
Applying Young's inequality with $\mathtt{s} := \dfrac{2\mathtt{r}}{\mathtt{r}-3} \in [2, \infty) $ and using \eqref{eq:ext} yields 
\begin{equation}\label{eq:Xi5}
\begin{aligned}
{\mathlarger{\Xi}}_5 & \lesssim \varepsilon \int_{I_*} \Big(  \Vert  \rho^{1/2} \dot{\vartheta} \Vert_{L^2(\Omega_\eta)}^2  +  \Vert  \nabla\vartheta \Vert_{W^{1,2}(\Omega_\eta)}^2  \Big)\dt     +  c(\varepsilon)  \int_{I_*}   \Vert \nabla\vartheta \Vert_{L^{2}(\Omega_\eta)}^2  \Vert \partial_t \eta \Vert_{W^{1, 2}(\omega)}^2  \dt
\\
& \quad + c(\varepsilon) \int_{I_*} \Vert \rho^{1/2}\bv\Vert_{L^{\mathtt{r}}(\Omega_\eta)}^{\mathtt{s}}  \Vert \vartheta\Vert_{W^{1, 2}(\Omega_\eta)}^{2} \dt . 
\end{aligned}
\end{equation} 
To close the acceleration estimate \eqref{eq:AccelEstimate}, it only remains to estimate $\Vert  \vartheta \Vert_{W^{2,2}(\Omega_\eta)}$. This is a consequence of elliptic regularity.  More precisely, for each fixed  $t \in I_*$, the internal energy equation $\eqref{eq:ContMomentEq}_3$ yields
\begin{equation}\label{eq:EllipticTheta}
\begin{aligned}
\Vert  \vartheta \Vert_{W^{2,2}(\Omega_\eta)}^2 & \lesssim    \Vert  \rho^{1/2} \partial_t \vartheta  \Vert_{L^{2}(\Omega_\eta)}^2 +  \Vert  \rho^{1/2} \bv \cdot \nabla \vartheta  \Vert_{L^{2}(\Omega_\eta)}^2  +  \Vert  \nabla\bv \Vert_{L^{2}(\Omega_\eta)}^2\Vert  \nabla\bv \Vert_{L^{\infty}(\Omega_\eta)}^2
\\
& \quad + \Vert  \Div(\bv) \Vert_{L^{2}(\Omega_\eta)}^2   + \Vert  \vartheta \Vert_{L^{2}(\Omega_\eta)}^2 , 
\end{aligned}
\end{equation}
where the hidden constant depends on $\Vert  \rho \Vert_{L^{\infty}(\Omega_\eta)}$ and $\Vert  \vartheta \Vert_{L^{\infty}(\Omega_\eta)}$. Importantly, \eqref{eq:EllipticTheta} holds  due to assumption \ref{A3} and the regularity of $\eta$, notably  $\eta\in L^\infty(I_\ast,W^{2,2}(\omega))$, as a consequence of the elliptic regularity theory in  \cite[Chapter 14]{MaSh} (see also \cite[Remark 2.10]{breit2023ladyzhenskaya}). 


\cref{lem:MainResult} then follows by combining the above estimates, choosing $\varepsilon > 0$ sufficiently small, using \cref{theo:MainResult} and applying Gr\"onwall's lemma together with assumption \ref{A4} for the velocity field.  
\end{proof}

\begin{proof}[Proof of \cref{theo:MainResult2}]
We first note that maximal regularity (see, e.g.,  \cite[Section 2, Theorem 2.2]{denk2007optimal}) together with the continuous embedding result of \cite[Section 3, Theorem 3.1]{LionsMagenes1} yields, in view of \eqref{eq:ShellEq}, that for every $\sigma \in [0, 2]$,
\begin{equation}\label{eq:MRforLemma41}
\begin{aligned}
& \int_{I_*} \left( \Vert \partial_t^2 \eta \Vert_{W^{\sigma, 2}(\omega)}^2 + \Vert \partial_t \eta \Vert_{W^{\sigma+2, 2}(\omega)}^2 + \Vert  \eta \Vert_{W^{\sigma + 4, 2}(\omega)}^2  \right) \dt  +  \sup\limits_{I_*} \left(  \Vert \partial_t \eta \Vert_{W^{\sigma + 1, 2}(\omega)}^2  + \Vert \eta \Vert_{W^{\sigma + 3, 2}(\omega)}^2 \right)
\\[0.4em]
& \quad \lesssim \int_{I_*} \left( \Vert \bv \Vert_{W^{\sigma +3/2,2}(\Omega_\eta)}^2 + \Vert p \Vert_{W^{\sigma +1/2,2}(\Omega_\eta)}^2 \right) \dt + \Vert \eta_0 \Vert_{W^{5,2}(\omega)}^2 + \Vert \eta_* \Vert_{W^{3,2}(\omega)}^2.
\end{aligned}
\end{equation}
In view of \cref{theo:MainResult} and \cref{lem:MainResult} the right-hand side stays bounded for $\sigma\leq 1/2$. 
Following the approach of  \cite[Section 4, Lemma 4.1]{MN}, we aim to prove that
\begin{equation}\label{eq:VelocityCondEstimate}
\begin{aligned}
& \int_{I_*} \left( \Vert \partial_t \bv \Vert_{W^{2,2}(\Omega_\eta)}^2   + \Vert \bv \Vert_{W^{4,2}(\Omega_\eta)}^2  + \Vert p \Vert_{W^{3,2}(\Omega_\eta)}^2 \right) \dt  + \sup\limits_{I_*} \Vert \rho \Vert_{W^{3,2}(\Omega_\eta)}^2 
\\[0.4em]
& \quad \lesssim {\mathlarger{\mathtt{E}}}_{\mathrm{acc}}  +   \Vert \bv_0 \Vert_{W^{3,2}(\Omega_{\eta_0})}^2 +  \Vert \rho_0 \Vert_{W^{3,2}(\Omega_{\eta_0})}^2 + \Vert \eta_0 \Vert_{W^{5,2}(\omega)}^2 +  \Vert \eta_* \Vert_{W^{3,2}(\omega)}^2,
\end{aligned}
\end{equation}
with the hidden constant depending on \ref{A1}--\ref{A4},  $T_*$,   and  $C_0$.

For this purpose, we introduce the time-dependent  diffeomorphism 
\[
\bfPsi_{\eta \to \eta_0 }  := \bfPsi_\eta \circ \left(\bfPsi_{\eta_0} \right)^{-1} \colon \Omega_{\eta_0} \to   \Omega_{\eta},  
\]
and define the pull-back density, velocity and temperature by 
\[
\underline{\rho} = \rho \circ \bfPsi_{\eta \to \eta_0 }, \qquad \underline{\bv} := \bv\circ \bfPsi_{\eta \to \eta_0 }, \qquad \underline{\vartheta} := \vartheta\circ \bfPsi_{\eta \to \eta_0 }. 
\]
Moreover, we define
\begin{equation*}\label{matrices}
\begin{aligned}
\mathbf{A}_{\eta \to \eta_0 } & =J_{\eta \to \eta_0 } \big(\nabx \bfPsi_{\eta \to \eta_0 }^{-1}\circ \bfPsi_{\eta \to \eta_0 }\big)\big( \nabx \bfPsi_{\eta \to \eta_0 }^{-1}\circ \bfPsi_{\eta \to \eta_0 } \big)^\intercal,
\\[0.4em]
\mathbf{B}_{\eta \to \eta_0 } & =J_{\eta \to \eta_0 } \left(\nabx \bfPsi_{\eta \to \eta_0 }^{-1}\circ \bfPsi_{\eta \to \eta_0 }\right)^\intercal,
\end{aligned}
\end{equation*}
where $J_{\eta \to \eta_0 } = \mathrm{det}(\nabla\bfPsi_{\eta \to \eta_0 })$.   \\

We next reduce to homogeneous boundary conditions by setting  
\[ \underline{\bu} = \underline{\bv} - \mathcal{E}_{\eta_0}(\partial_t\eta\bn),\]
where $\mathcal{E}_{\eta_0}$ is the extension operator from \eqref{eq:ext}.
For convenience, we denote by $\mathcal{L}$ the Lam\'e operator 
\[
\mathcal{L}\,\underline{\bu} := \mu\Delta\underline{\bu} + (\lambda + \mu)\nabla\Div\underline{\bu},  
\] 
and further introduce  the operators 
\begin{equation*}
\begin{aligned}
\underline{\mathsf{B}}\, \underline{\bu} & :=  - J_{\eta \to \eta_0 } \underline{\rho} \nabla\underline{\bu} \cdot \partial_t \bfPsi_{\eta \to \eta_0 }^{-1}\circ \bfPsi_{\eta \to \eta_0 }   -  \underline{\rho} \underline{\bu}  \big( \nabla\mathcal{E}_{\eta_0}(\partial_t\eta\bn) \colon  \mathbf{B}_{\eta \to \eta_0 }    \big)  -   \underline{\rho} \mathcal{E}_{\eta_0}(\partial_t\eta\bn)  \big( \nabla \underline{\bu} \colon \mathbf{B}_{\eta \to \eta_0 } \big),
\end{aligned}
\end{equation*}
and 
\begin{equation*}
\begin{aligned}
\underline{\mathsf{f}} & :=  - J_{\eta \to \eta_0 } \underline{\rho} \partial_t \mathcal{E}_{\eta_0}(\partial_t\eta\bn)  -  J_{\eta \to \eta_0 } \underline{\rho} \nabla\mathcal{E}_{\eta_0}(\partial_t\eta\bn) \cdot \partial_t \bfPsi_{\eta \to \eta_0 }^{-1}\circ \bfPsi_{\eta \to \eta_0 }  - \underline{\rho} \underline{\bu} \big( \nabla \underline{\bu} \colon \mathbf{B}_{\eta \to \eta_0 } \big)  
\\[0.4em]
& \quad \;\,  - \underline{\rho} \mathcal{E}_{\eta_0}(\partial_t\eta\bn) \big( \nabla\mathcal{E}_{\eta_0}(\partial_t\eta\bn)  \colon\mathbf{B}_{\eta \to \eta_0 }  \big)  - \mathbf{B}_{\eta \to \eta_0 } \nabla p(\underline{\rho},\underline\vartheta)
\\[0.4em]
& \quad \;\, + \Div\Bigg[  \mu \left(\mathbf{A}_{\eta \to \eta_0 }  - \mathbb{I}_{3 \times 3}   \right) \nabla\underline{\bu}  + (\lambda + \mu)\biggl( \Bigl(  \left(\mathbf{B}_{\eta \to \eta_0 } - \mathbb{I}_{3 \times 3} \right) \colon \nabla\underline{\bu} \Bigr)  \mathbb{I}_{3 \times 3}  
\\[0.4em]
& \qquad \qquad + \left( \mathbf{B}_{\eta \to \eta_0 } \colon \nabla\underline{\bu}  \right)  \left( \dfrac{1}{J_{\eta \to \eta_0 }} \mathbf{B}_{\eta \to \eta_0 } -  \mathbb{I}_{3 \times 3}  \right)                 \biggr)      \Bigg]      
\\[0.4em]
& \quad \;\, + \Div\Bigg[ \mu  \mathbf{A}_{\eta \to \eta_0 }  \nabla\mathcal{E}_{\eta_0}(\partial_t\eta\bn) +   \dfrac{(\lambda + \mu)}{J_{\eta \to \eta_0}}   \Big( \mathbf{B}_{\eta \to \eta_0 } \colon \nabla \mathcal{E}_{\eta_0}(\partial_t\eta\bn)     \Big)     \mathbf{B}_{\eta \to \eta_0 }     \Bigg].
\end{aligned}
\end{equation*}
It is shown in \cite[Section 4, Lemma 4.1]{MN} that $\underline{\bu}$ solves the problem
\begin{equation}\label{eq:CauchyPb-u}
\tag{$\mathtt{CP}$}\left\{
\begin{aligned}
\partial_t\underline{\bu} & =  \mathsf{A} \underline{\bu} + \mathsf{B}\underline{\bu}  + \mathsf{f}  && \text{ in } I_* \times \Omega_{\eta_0},
\\[0.4em]
\underline{\bu}(0) & = \underline{\bu}_0   && \text{ in } \Omega_{\eta_0}.  
\end{aligned}\right.
\end{equation}
 and satisfies the estimate
\begin{equation}\label{eq:uMRestimatePrior}
\begin{aligned}
&  \Vert \partial_t \underline{\bu} \Vert^2_{L^2\left(I_*;  W^{2,2}(\Omega_{\eta_0})   \right)} + \Vert \underline{\bu} \Vert^2_{L^2\left(I_*;  W^{4,2}(\Omega_{\eta_0})   \right)}  
\\[0.4em]
& \quad \lesssim  \Vert \bu_0 \Vert^2_{W^{3,2}(\Omega_{\eta_0}) } +  \Vert  \mathsf{B}\underline{\bu} \Vert^2_{L^2\left(I_*;  W^{2,2}(\Omega_{\eta_0})   \right)} +  \Vert  \mathsf{f} \Vert^2_{L^2\left(I_*;  W^{2,2}(\Omega_{\eta_0})   \right)} , 
\end{aligned}
\end{equation}
\begin{equation}\label{eq:BuEstim}
\begin{aligned}
\Vert  \mathsf{B}\underline{\bu}  \Vert^2_{L^2\left(I_*;  W^{2,2}(\Omega_{\eta_0})   \right)} & \lesssim \kappa \int_{I_*} \Vert \underline{\bu} \Vert_{W^{4,2}(\Omega_{\eta_0})}^2  \dt  +  c(\kappa) \int_{I_*} \Vert \underline{\bu} \Vert_{W^{2,2}(\Omega_{\eta_0})}^2 \Vert \partial_t \eta \Vert_{W^{3,2}(\omega)}^2  \dt      +   {\mathlarger{\mathtt{E}}}_{\mathrm{acc}} 
\end{aligned}
\end{equation}
and 
\begin{equation}\label{eq:fEstim}
\begin{aligned}
&  \Vert  \mathsf{f} + \mathcal{E}_{\eta_0}(\partial_t\eta\bn)  \Vert^2_{L^2\left(I_*;  W^{2,2}(\Omega_{\eta_0})   \right)} 
\\[0.4em]
& \quad  \lesssim    \sup\limits_{I_*} \Vert \underline{\bu} \Vert_{W^{3,2}(\Omega_{\eta_0})}^2   \int_{I_*}  \Vert \underline{\bu} \Vert_{W^{2,2}(\Omega_{\eta_0})}^2  \dt  + \int_{I_*} \Vert p(\underline{\rho}) \Vert_{W^{3,2}(\Omega_{\eta_0})}^2 \dt    + \kappa \sup\limits_{I_*} \Vert \partial_t \eta \Vert_{W^{5/2,2}(\omega)}^2    +   {\mathlarger{\mathtt{E}}}_{\mathrm{acc}} 
 \\[0.4em]
 & \qquad     +  T_*^{1/2}  \int_{I_*} \Vert \underline{\bu} \Vert_{W^{4, 2}(\omega)}^2 \dt    + T_* \int_{I_*} \Vert \Dely^3\eta \Vert_{L^2(\omega)}^2 \dt   + \kappa \int_{I_*} \Vert \partial_t\Dely^2\eta \Vert_{L^2(\omega)}^2 \dt 
 \\[0.4em]
 &\qquad  +  c(\kappa)  \int_{I_*} \Vert \eta \Vert_{W^{9/2,2}(\omega)}^2 \Vert \partial_t \eta \Vert_{W^{3,2}(\omega)}^2   \dt .
\end{aligned}
\end{equation}
The only difference here lies in the estimate for the contribution of the pressure that is
\begin{align*}
{\mathlarger{\mathfrak{S}}}_4:=\left(J_{\eta \to \eta_0}\underline{\rho} \right)^{-1} \mathbf{B}_{\eta \to \eta_0 } \nabla p(\underline{\rho},\underline\vartheta)=\left(J_{\eta \to \eta_0} \right)^{-1} \mathbf{B}_{\eta \to \eta_0 }\nabla\underline\vartheta+\left(J_{\eta \to \eta_0}\underline{\rho} \right)^{-1} \mathbf{B}_{\eta \to \eta_0 }\vartheta\nabla\underline\varrho.
\end{align*}
We obtain
\begin{align*}
\int_{I_*}\|{\mathlarger{\mathfrak{S}}}_4\|^2_{W^{2,2}(\Omega_{\eta_0})}\dt&\lesssim \int_{I_*}\|\underline\rho\|^2_{W^{2,2}(\Omega_{\eta_0})}\|\eta\|^2_{W^{3,2}(\omega)}\|\underline\vartheta\|^2_{W^{3,2}(\Omega_{\eta_0})}\dt\\
&+\int_{I_*}\|\underline\rho^{-1}\|^2_{W^{2,2}(\Omega_{\eta_0})}\|\eta\|^2_{W^{3,2}(\omega)}\|\underline\vartheta\nabla\underline\rho\|^2_{W^{2,2}(\Omega_{\eta_0})}\dt\\
&\lesssim \int_{I_*}\|\underline\vartheta\|^2_{W^{3,2}(\Omega_{\eta_0})}\dt+\int_{I_*}\|\underline\vartheta\nabla\underline\rho\|^2_{W^{2,2}(\Omega_{\eta_0})}\dt,
\end{align*}
where, by assumptions \ref{A1} and \ref{A4} as well as \cref{lem:MainResult},
\begin{align*} 
&\int_{I_*}\|\underline\vartheta\nabla\underline\rho\|^2_{W^{2,2}(\Omega_{\eta_0})}\dt
\\
&\lesssim \int_{I_*}\|\underline\vartheta\nabla\underline\rho\|^2_{L^{2}(\Omega_{\eta_0})}\dt+\int_{I_*}\|\nabla^2(\underline\vartheta\nabla\underline\rho)\|^2_{L^{2}(\Omega_{\eta_0})}\dt\\
&\lesssim 1+\int_{I_*}\|\underline\vartheta\nabla^3\underline\rho\|^2_{L^{2}(\Omega_{\eta_0})}\dt+\int_{I_*}\|\nabla\underline\vartheta\nabla^2\underline\rho\|^2_{L^{2}(\Omega_{\eta_0})}\dt+\int_{I_*}\|\nabla^2\underline\vartheta\nabla\underline\rho\|^2_{L^{2}(\Omega_{\eta_0})}\dt\\
&\lesssim 1+\int_{I_*}\|\underline\rho\|^2_{W^{3,2}(\Omega_{\eta_0})}\dt+\int_{I_*}\|\nabla\underline\vartheta\|^2_{L^4}\|\nabla^2\underline\rho\|^2_{L^{4}(\Omega_{\eta_0})}\dt+\int_{I_*}\|\nabla^2\underline\vartheta\|_{L^4(\Omega_{\eta_0})}^2\|\nabla\underline\rho\|^2_{L^{4}(\Omega_{\eta_0})}\dt\\
&\lesssim 1+\int_{I_*}\|\underline\rho\|^2_{W^{3,2}(\Omega_{\eta_0})}\dt+\int_{I_*}\|\nabla\underline\vartheta\|_{L^{\infty}(\Omega_{\eta_0})}\|\nabla\underline\vartheta\|_{L^{2}(\Omega_{\eta_0})}\|\nabla\underline\rho\|_{L^{\infty}(\Omega_{\eta_0})}\|\nabla\underline\rho\|_{W^{2,2}(\Omega_{\eta_0})}\dt 
\\
&+ \int_{I_*}\|\underline\vartheta\|_{W^{4,2}(\Omega_{\eta_0})}\|\underline\rho\|_{W^{2,2}(\Omega_{\eta_0})}\dt
\\
&\lesssim 1+\int_{I_*}\Big(1+\|\nabla\underline\rho\|_{L^{\infty}(\Omega_{\eta_0})}^2+\|\nabla\underline\vartheta\|^2_{L^{\infty}(\Omega_{\eta_0})}\Big)\|\underline\rho\|^2_{W^{3,2}(\Omega_{\eta_0})}\dt+\kappa\int_{I_*}\|\underline\vartheta\|_{W^{4,2}}^2\dt. 
\end{align*}
Note that by assumption \ref{A4} one can apply a Gronwall argument to the first term.
Finally, we argue as in \eqref{eq:31.03B} which gives (setting $(\eta_1,\overline\bu_1,\rho_1,\vartheta_1)=(\eta,\overline\bu,\rho,\vartheta)$ and $(\eta_2,\overline\bu_2,\rho_2,\vartheta_2)=0$)
\begin{align}\label{eq:theend}
\int_I\int_\Omega |\partial_t^2\overline\vartheta|^2\dxt+\int_I\|\overline\vartheta\|_{W^{4,2}_x}^2\dt\lesssim T_* {\tt E}_{\tt high}.
\end{align}
Note that the regularity of $\eta$ obtained from \eqref{eq:MRforLemma41} and Theorem \ref{theo:MainResult} is just ciritical for this estimate. This finishes the proof of \eqref{eq:VelocityCondEstimate}.
Eventually, we can allow $\sigma\leq 5/2$ in \eqref{eq:MRforLemma41}. 
This completes the proof of Theorem \ref{theo:MainResult2}. 
\end{proof}


 \section*{Acknowledgements}

This work was funded by the Deutsche Forschungsgemeinschaft (DFG) -- Projektnummer 543675748.

\section*{Compliance with Ethical Standards}
\smallskip
\par\noindent
{\bf Conflict of Interest}. The authors declare that they have no conflict of interest.

\smallskip
\par\noindent
{\bf Data Availability}. Data sharing is not applicable to this article as no datasets were generated or analysed during the current study.

\end{document}